\documentclass[12pt]{article}

\usepackage{amsmath,amssymb,amsthm,xcolor,mathrsfs,enumitem,mathtools,centernot}

\usepackage[margin=0.9in]{geometry} 
\usepackage{authblk}

\usepackage[hidelinks, backref=page]{hyperref}
\hypersetup{
    colorlinks,
    citecolor=blue,
    filecolor=blue,
    linkcolor=blue,
    urlcolor=blue
    }
\usepackage[ocgcolorlinks]{ocgx2}
\usepackage[T1]{fontenc}
\parindent=0pt
\parskip=5pt

\theoremstyle{definition}
\newtheorem{defn}{Definition}[section]

\newtheorem{thm}[defn]{Theorem}
\newtheorem{cor}[defn]{Corollary}

\newtheorem{rem}[defn]{Remark}
\newtheorem{ex}[defn]{Example}

\numberwithin{equation}{section}

\newcommand{\amst}{\mathsf{amst}}
\newcommand{\Th}{\mathsf{Th}}
\newcommand{\Mod}{\mathsf{Mod}}
\newcommand{\pow}{\mathcal{P}}
\newcommand{\lang}{\mathscr{L}}
\newcommand{\mstr}{\mathfrak{M}}
\newcommand{\Finset}{\mathsf{FinSet}}
\newcommand{\modfin}{\mathsf{ModFin}}
\newcommand{\thmref}[1]{Theorem~\ref{#1}}
\newcommand{\secref}[1]{\S\ref{#1}}

\newcommand{\subsecref}[1]{\S\S\ref{#1}}
\newcommand{\defref}[1]{Definition~\ref{#1}}
\newcommand{\remref}[1]{Remark~\ref{#1}}
\newcommand{\corref}[1]{Corollary~\ref{#1}}
\newcommand{\exref}[1]{Example~\ref{#1}}

\DeclareRobustCommand\deduc{\mathrel{|}\joinrel\mkern-.5mu\mathrel{-}}
\title{Abstract Model Structures and Compactness Theorems\footnote{The final version of this article has been submitted for publication.}}
\author[1]{Sayantan Roy}
  \author[2]{Sankha S. Basu }
  \author[3]{Mihir K. Chakraborty}
  \affil[1,2]{Department of Mathematics, Indraprastha Institute of Information Technology-Delhi, New Delhi, India.}
  \affil[3]{School of Cognitive Science, Jadavpur University, Kolkata, India.}
\date{July 03, 2025}

\begin{document}

\maketitle
\begin{abstract}
    The compactness theorem for a logic states, roughly, that the satisfiability of a set of well-formed formulas can be determined from the satisfiability of its finite subsets, and vice versa. Usually, proofs of this theorem depend on the syntactic/semantic particularities of the corresponding logic. In this paper, using the notion of \emph{abstract model structures}, we show that one can develop a generalized notion of compactness that is independent of these. Several characterization theorems for a particular class of compact abstract model structures are also proved. 
\end{abstract}

\textbf{Keywords:}
Universal Logic, Universal Model Theory, Abstract Model Structures, Compactness
\tableofcontents
\section{Introduction}
Model theory studies the relationships between the syntax and the semantics of (formal) languages via mathematical structures. This amounts to the study of structures of the form $\mstr=(\mathbf{M},\models,\pow(\lang))$, where $\mathbf{M} (\ne\emptyset)$ and $\lang$ are sets, $\pow(\lang)$ is the power set of $\lang$, and $\models\,\subseteq\,\mathbf{M}\times \pow(\lang)$. One can think of $\lang$ as the set of \emph{sentences} or \emph{well-formed formulas (wffs)} of some language, $\mathbf{M}$ as a set of \emph{structures} (or \emph{models}) for $\lang$, and $\models$ as the \emph{satisfaction relation} between sets of sentences and models. In this paper, a structure of the above form -- i.e. a structure of the form $(\mathbf{M},\models,\pow(\lang))$ -- will be referred to as an \emph{abstract model structure} ($\mathsf{amst}$).

The compactness theorem for first-order logic is one of the central theorems of model theory. Together with the downward Löwenheim-Skolem theorem, it characterizes the expressive power of standard first-order logic among all logics which obey certain regularity conditions, the formal statement of which is known as \emph{Lindstr{\"o}m's theorem} (see \cite{ChangeKeisler1990, Lindstrom1969} for details). A logic is said to be \emph{compact} if, for every set $\Gamma$ of wffs, it is satisfiable iff every finite subset of $\Gamma$ is so. Therefore, in order to understand the compactness theorem for any logic, it is crucial to understand what `satisfiability' (of an arbitrary set of wffs) means in the context of \emph{that} logic.  However, the notion of satisfiability can also be defined independent of any underlying logic, as follows.

\begin{defn}[\textsc{Satisfiability, Finite Satisfiability and Compactness}]
Given an $\mathsf{amst}$ $\mstr=(\mathbf{M},\models,\pow(\lang))$, $m\in \mathbf{M}$, and a set $\Gamma\subseteq \lang$, we say that $m$ \emph{satisfies} $\Gamma$ \emph{in $\mstr$} if $m\models \Gamma$. A set $\Gamma\subseteq \lang$ is said to be \emph{satisfiable in $\mstr$} if there exists $m\in \mathbf{M}$ such that $m$ satisfies $\Gamma$, and \emph{finitely satisfiable in $\mstr$} if every finite subset of $\Gamma$ is satisfiable. An $\mathsf{amst}$ $\mstr=(\mathbf{M},\models,\pow(\lang))$ is said to be \emph{compact} if for all $\Gamma\subseteq \lang$, $\Gamma$ is satisfiable in $\mstr$ iff it is finitely satisfiable in $\mstr$.
\end{defn}
We will often drop an explicit mention of $\mstr$, for discussions involving a single $\amst$. 

Now, we ask the following question. Is it possible to characterize compact $\amst$s?
This article is dedicated to answering this question for a special class of $\amst$s that we call \emph{normal $\mathsf{amst}$}s (see \hyperlink{def:normamst}{\defref{def:normamst}}). Before going into the technical details, we point out below some characterizing features of the spirit of this article by comparing and contrasting it with some earlier works of similar flavour. 

\begin{enumerate}[label=(\arabic*)]
    \item Although we are trying to characterize compact normal $\mathsf{amst}$s, we do so without an underlying logical system. Thus, even though in essence, our results may be said to belong to the subject known as \emph{abstract model theory} (in the sense of \cite{Barwise1974, Barwise1985}), they lack the strong commitment to the conventional concrete systems of logic that are characteristic features of works in this area. Instead, the results of this paper may be seen as an extension and/or continuation of the framework of abstract model theory proposed in \cite{Garcia-Matos_Vaananen2005}, since the notion of an `abstract logic' defined there corresponds to what we call a \emph{normal} $\amst$. Moreover, our results are also independent of precise notions of concrete models, sentences, satisfaction and so on. Thus, the spirit of this paper is similar to that of `universal model theory' in the following sense (see \cite{Diaconescu2007}). 
    \begin{quote}
        \begin{small}
        [U]niversal model theory [is] part of the grand project of universal logic promoted by B{\'e}ziau and others \cite{Beziau2006}. Like universal logic in general, universal model theory in particular is not seeking for one single model theory in which all other model theories can be expressed. This would be an unrealistic approach based upon an essentialist view on logic and model theory. The universal model theory ideal rather means a mathematical framework for developing model-theoretic concepts and results in a non-essentialist and groundless manner, and which may be reflected at the level of actual logics in the form of concrete model theories. Here by ‘non-essentialist’ we mean the absence of a lasting, individual essence of anything in general, and of logics and logical theories in particular, and by ‘groundless’ we mean a true absence of commitment to actual logical systems.
        \end{small}
    \end{quote}
    \item Our mathematical approach here is set-theoretic and not -- as, e.g., is the case in \cite{AndrékaNémeti1983, Diaconescu2008, Ehresmann1968, GuitartLair1980, LambekScott1986, MakkaiPare1989, MakkaiReyes1977} -- category-theoretic. This choice is motivated by the following considerations. First, from a purely pedagogical perspective, the set-theoretic approach is arguably simpler, and so has greater potential to reach a larger audience. Second, since our results are also language-independent, we are not compromising with the spirit of `universal model theory' in terms of abstraction, and hence in terms of applicability. Third, in all the references above, except for \cite{Diaconescu2008} which adopts the framework of \emph{institutions}, the satisfaction relation is \emph{defined}, and not \emph{axiomatized}. However, we also do not adopt this institution-theoretic approach mainly because of the following two reasons. 
\begin{enumerate}[label=$\bullet$]
	\item The semantic consequence relation in any institution can be proven to satisfy reflexivity, monotonicity and transitivity (\cite[Proposition 3.7]{Diaconescu2008}). This is also the case with our treatment of normal $\amst$s. Thus, we do not lose any of the Tarskian conditions by taking this route.
	\item  A comparison with \cite[Section 6.4]{Diaconescu2008}, where model compactness of institutions is discussed, convinces us that more general results can be phrased without the jargon of institution theory, and in particular, of category theory. Whether this is true for the rest of the results of institution-independent model theory is another matter entirely and will not be pursued here.
\end{enumerate} 
\end{enumerate}

The remainder of this article is organized as follows. In the next section, we give several examples of $\amst$s. Normal $\mathsf{amst}$ is defined in \hyperlink{sec:normamst}{\secref{sec:normamst}}. The main aim of this section is to prove several characterizations of compact normal $\mathsf{amst}$s. In \hyperlink{sec:com-compl}{\subsecref{subsec:com-compl}}, we show that every normal $\mathsf{amst}$ induces a Tarski-type logical structure and vice versa. We then prove our first characterization theorem for normal $\amst$s (\hyperlink{thm:compact_normal_ams(I)}{\thmref{thm:compact_normal_ams(I)}}). Next, in \hyperlink{sec:com-top}{\subsecref{subsec:com-top}}, generalizing the topological proof of compactness for propositional calculus (see, e.g., \cite{Paseau2010}) we provide a topological characterization of compact normal $\amst$s (\hyperlink{thm:compact_normal_ams(II)}{\thmref{thm:compact_normal_ams(II)}}). In \hyperlink{subsec:comp-ultra}{\subsecref{subsec:comp-ultra}}, the ultraproduct proof of compactness is generalized and the notion of \emph{ultramodels}, which can be thought of as a generalization of ultraproducts, is introduced. These help us prove two more characterizations of compact normal $\amst$s (\hyperlink{thm:finsat_ultramodel=>sat}{\thmref{thm:finsat_ultramodel=>sat}}, \hyperlink{thm:compact_normal_ams(IV)}{\thmref{thm:compact_normal_ams(IV)}}) and a generalized version of Łoś’s theorem (\hyperlink{thm:genLoz}{\thmref{thm:genLoz}}). Finally, in the concluding section, we point out some directions for future research.   

\section{Examples}
We have already mentioned that $\amst$s can be seen as a generalized expression of the standard model-theoretic enterprise. This section brings together a number of examples that emphasize further the versatility of these.

\hypertarget{ex:logic_str}{\begin{ex}[\textsc{Logical Structures}]{\label{ex:logic_str}} A \emph{logical structure} (see \cite{Beziau1994}) is a pair $(\lang, \deduc)$, where $\lang$ is a set and $\deduc\,\subseteq\, \mathcal{P}(\lang)\times \lang$. This can be represented as the $\mathsf{amst}$ $\mstr_{\mathfrak{L}}=(\lang,\models,\pow(\lang))$, where for all $\Gamma\cup\{\alpha\}\subseteq \lang$, $\alpha\models \Gamma$ iff $\Gamma\not\deduc\alpha$. Conversely, given an $\mathsf{amst}$ of the form $\mstr=(\lang,\models,\pow(\lang))$, one can define a logical structure $(\lang,\deduc)$, where for all $\Gamma\cup\{\alpha\}\subseteq \lang$, $\Gamma\not\deduc\alpha$ iff $\alpha\models \Gamma$. 
\end{ex}}

The next example shows that an \emph{information system} can be seen as an $\amst$. The notion was introduced by Scott in \cite{Scott1982}. We, however, follow the presentation in \cite[Chapter 12]{Winskel1993}.

\begin{ex}[\textsc{Information Systems}]
    An \emph{information system} is defined to be a structure $\mathfrak{A}=(A,\text{Con},\deduc)$, where $A$ is a countable set of \emph{tokens}, $\text{Con}$ is a non-empty subset of the set of all finite subsets of $A$, called the \emph{consistent sets} and $\deduc\,\subseteq(\text{Con}\setminus\{\emptyset\})\times A$ is the \emph{entailment relation} satisfying the following properties. 
    \begin{enumerate}[label=(\alph*)]
        \item If $Y\in \text{Con}$ and $X\subseteq Y$ then $X\in \text{Con}$.
        \item If $a\in A$ then $\{a\}\in \text{Con}$.
        \item $X\deduc a$ implies that $X\cup\{a\}\in \text{Con}$.
        \item $X\in \text{Con}$ and $a\in X$ implies that $X\deduc a$.
        \item If $X,Y\in \text{Con}$, and $X\deduc b$ for all $b\in Y$, then $Y\deduc c$ implies that $X\deduc c$.
    \end{enumerate}
    Then, $\mstr_{\mathfrak{A}}=(A,\models,\pow(A))$ is an $\amst$, where $\models$ is defined as follows. For all $\Gamma\cup\{a\}\subseteq A$, $a\models \Gamma$ iff $\Gamma\deduc a$ (cf. \hyperlink{ex:logic_str}{\exref{ex:logic_str}}).
\end{ex} 

Our next example considers \emph{Chu spaces}. The generality of Chu spaces allows for a uniform handling of various mathematical structures, e.g., sets, posets, semi-lattices,  preordered sets, Stone spaces, ordered Stone spaces, topological spaces, locales, complete semilattices, distributive lattices (but not general lattices), algebraic lattices, frames, profinite (Stone) distributive lattices, Boolean algebras, and complete atomic Boolean algebras, to name a few (for details see \cite{Pratt1995a, Pratt1999}). One can also realize the notion of \emph{contexts} studied in Formal Concept Analysis (see \cite{GanterWille1999} for details), that of \emph{classifications} discussed in \cite{BarwiseSeligman1997}, and that of \emph{topological systems} investigated in \cite{Vickers1989} as certain kinds of Chu spaces. 

\begin{ex}[\textsc{Chu Spaces}]
    A \emph{Chu space over a set $K$} is a triple $\mathfrak{C}=(X,r,A)$, where $X$ and $A$ are sets and $r: X\times A\to K$.
    This Chu space gives rise to the $\amst$ $\mstr_{\mathfrak{C}}=(X\times A,\models,\pow(K))$, where for all $(x,a)\in X\times A$ and $\Gamma\subseteq K$, $(x,a)\models \Gamma$ iff $\Gamma=\{k\}$ for some $k\in K$, and $r(x,a)=k$.
\end{ex}

We now show that directed graphs, and even categories, can be thought of as certain kinds of $\amst$s.

\begin{ex}[\textsc{Directed Multigraphs}]
    A \emph{directed multigraph/multidigraph} (also called \emph{quiver}) is a quadruple $\mathfrak{Q}=(V,E,s,t)$, where 
    \begin{enumerate}[label=(\alph*)]
        \item $V$ is a set, the elements of which are called \emph{vertices} of $\mathfrak{Q}$,
        \item $E$ is a set, the elements of which are called \emph{edges} of $\mathfrak{Q}$,
        \item $s:E\to V$ which assigns to each edge $e$, the \emph{source} of $e$, viz., $s(e)$, and 
        \item $t:E\to V$ which assigns to each edge $e$, the \emph{target} of $e$, viz., $t(e)$.
    \end{enumerate}Each such directed multigraph induces an $\amst$ $\mstr_{\mathfrak{Q}}=(E,\models,\pow(V\times V))$, where for all $e\in E$ and $\Gamma\subseteq V\times V$,
    \[
    e\models \Gamma\hbox{ iff }\Gamma=\{(s(e),t(e))\}
    \]
    Conversely, suppose $\mstr$ is an $\amst$ of the form $(E,\models,\pow(V\times V))$, where $\models\,\subseteq E\times\pow(V\times V)$ is such that for each $e\in E$, there exists a unique $(a,b)\in V\times V$ such that $e\models\{(a,b)\}$. Then, $\mstr$ induces a directed multigraph $(V,E,s_{\mstr},t_{\mstr})$, where $s_{\mstr}:E\to V$ and $t_{\mstr}:E\to V$ are defined as follows. For all $e\in E$, $(s_\mstr(e),t_\mstr(e))$ is the unique element of $V\times V$ such that $e\models\{(s_\mstr(e),t_\mstr(e))\}$. 
\end{ex}

Since categories can be thought of as certain kinds of multidigraphs, it is hardly surprising that categories too can be represented as $\amst$s and vice versa. This is illustrated in the next example. The definition of a category that we employ here is modelled after the so-called ‘object-free' one as  in \cite[Definition 3.53]{AdamakHerrlichStrecker?}. The details are as follows.

\begin{ex}[\textsc{Categories}]
    The object-free definition of categories relies on the definition of \emph{partial binary algebras}. These are pairs of the form $(X,\ast)$ where $X$ is a set%
    \footnote{In the original definition \cite[Definition 3.53]{AdamakHerrlichStrecker?}, $X$ is taken to be a \emph{class}. Thus, the definition of object-free categories we use below is the same as that of \emph{small} object-free categories. While for category theory, these size issues become important, it is not relevant for the current example. One can find more details in \cite{Borceux1994, Maclane1998}.} and $\ast$ is an operation defined on a subset of $X\times X$. The value of $\ast(x,y)$ is denoted by $x\ast y$. An element $u\in X$ is said to be a \emph{unit of $(X,\ast)$} if for all $x,y\in X$, $x\ast u=x$, whenever $x\ast u$ is defined, and $u \ast y = y$, whenever $u \ast y$ is defined.

    An \emph{object-free category} is a partial binary algebra $\mathfrak{P}=(M,\circ)$, where the members
    of $M$ are called \emph{morphisms}, that satisfy the following conditions:
    \begin{enumerate}[label=($\text{\alph*}_{\mathfrak{P}}$)]
        \item \underline{Matching Condition}: For morphisms $f, g$, and $h$, the following conditions are equivalent:
        \begin{enumerate}[label=$\bullet$]
            \item $g \circ f$ and $h \circ g$ are defined,
            \item $h \circ (g \circ f)$ is defined, and
            \item $(h\circ g)\circ f$ is defined.
        \end{enumerate}
        \item \underline{Associativity Condition}: If morphisms $f, g$, and $h$ satisfy the matching conditions, then $h \circ (g \circ f) = (h \circ g) \circ f$.
        \item \underline{Unit Existence Condition}: For every morphism $f$, there exist units $u_C$ and $u_D$ of $(M, \circ)$ such that $u_C \circ f$ and $f \circ u_D$ are defined.
    \end{enumerate}
    Given any such object-free category $\mathfrak{P}=(M,\circ)$, the corresponding $\amst$ is denoted as $\mstr_{\mathfrak{P}}=(M\times M,\models, \mathcal{P}(M))$, where for all $\Gamma\subseteq M$ and $(m,n)\in M\times M$, $(m,n)\models \Gamma$ iff $m\circ n$ is defined, and $\Gamma=\{m\circ n\}$. 

    Conversely, consider an $\amst$ of the form $\mstr=(M\times M,\models \mathcal{P}(M))$. Let us call an element $u\in M$ an \emph{$\mstr$-identity} if for all $g\in M$ the following statements hold: (a) $(u,g)\models \{g\}$ and $(u,g)\models \{k\}$ for some $k\in M$ implies that $k=g$ and (b) $(g,u)\models \{g\}$ and $(g,u)\models \{k\}$ for some $k\in M$ implies that $k=g$. Finally, suppose that $\models$ satisfies the following properties.
    \begin{enumerate}[label=($\text{\alph*}_{\mstr}$)]
        \item For all $(g,f)\in M\times M$ and $\Gamma\subseteq M$, $(g,f)\models \Gamma$ implies that $\Gamma$ is a singleton set.
        \item For all $\{(h,g),(g,f)\}\subseteq M\times M$, the following statements are equivalent.
        \begin{enumerate}[label=($\text{b}_{\mstr}\arabic*$)]
            \item $(g,f)\models \{x\}$ and $(h,g)\models \{y\}$ for some $\{x,y\}\in M$.
            \item If $(\text{b}_{\mstr}1)$ holds for unique $x$ and $y$, then there exists a unique $z\in M$ such that $(h,x)\models \{z\}$ iff $(y,g)\models \{z\}$.
        \end{enumerate}
       \item For all $f\in M$ there exist $\mstr$-identities $\{u_C,u_D\}\subseteq M$ such that $(u_C,f)\models \{f\}$ and $(f,u_D)\models \{f\}$.
    \end{enumerate}
    We now define a pair $(M,\ast_{\mstr})$ where $M$ is a set, $\ast_{\mstr}$ is a binary relation on $M$ defined as follows.
    $$x\ast_{\mstr}y~\text{iff there exists a unique}~l\in M~\text{such that}~(x,y)\models\{l\}$$This, however, gives rise to a partial binary operation on $M$, say $\odot_{\mstr}$, as explained below. 
    
    For any $\{x,y\}\subseteq M$, we say that $\odot_{\mstr}(x,y)$ is defined if $x\ast_{\mstr}y$. In that case, we choose the unique $l\in M$ such that $(x,y)\models \{l\}$ and let $\odot_{\mstr}(x,y)=l$.
    
    Then, the following statements hold. 
    \begin{enumerate}[label=(\alph*)]
        \item If $(M,\odot_{\mstr})$ satisfies $(\text{b}_{\mstr})$, then it satisfies both $(\text{a}_{\mathfrak{P}})$ and $(\text{b}_{\mathfrak{P}})$.
        \item If $(M,\odot_{\mstr})$ satisfies $(\text{c}_{\mstr})$, then it satisfies $(\text{c}_{\mathfrak{P}})$.
    \end{enumerate}
    Consequently, it follows that $(M,\odot_{\mstr})$ is an object-free category.
\end{ex}

{\hypertarget{sec:nomamst}{\section{Normal \texorpdfstring{$\mathsf{amst}$}{amst} and Compactness}}\label{sec:normamst}}

The concept of compact $\amst$s was introduced in the introductory section. It is, however, not hard to see that not every $\amst$ is compact. Nevertheless, the following results hold for all $\amst$s.

\hypertarget{thm:finsat=>maxfinsat}{\begin{thm}[\textsc{Extension Lemma}]{\label{thm:finsat=>maxfinsat}}
     Suppose $\mstr=(\mathbf{M},\models,\mathcal{P}(\lang))$ is an $\mathsf{amst}$ and $\Gamma\subseteq\lang$ is a finitely satisfiable set. Then, $\Gamma$ is contained in a maximal finitely satisfiable set (i.e., a set which is finitely satisfiable, but no proper superset of it is so).
\end{thm}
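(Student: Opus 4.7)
The plan is to apply Zorn's Lemma to the poset of finitely satisfiable extensions of $\Gamma$. More precisely, I would consider
\[
\mathcal{F} = \{\Delta \subseteq \lang : \Gamma \subseteq \Delta \text{ and } \Delta \text{ is finitely satisfiable in } \mstr\},
\]
partially ordered by set inclusion. Since $\Gamma$ is itself finitely satisfiable by hypothesis, $\Gamma \in \mathcal{F}$, so $\mathcal{F}$ is nonempty. A maximal element of $\mathcal{F}$ is, by its very definition, a maximal finitely satisfiable set containing $\Gamma$, so the conclusion follows once Zorn's Lemma is applicable.

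To verify the chain hypothesis of Zorn's Lemma, let $\mathcal{C} \subseteq \mathcal{F}$ be a nonempty chain and put $\Delta^{*} = \bigcup \mathcal{C}$. Clearly $\Gamma \subseteq \Delta^{*}$, so it only remains to show $\Delta^{*}$ is finitely satisfiable. Take any finite $F \subseteq \Delta^{*}$; each $\varphi \in F$ belongs to some $\Delta_{\varphi} \in \mathcal{C}$, and since $F$ is finite and $\mathcal{C}$ is totally ordered by inclusion, one of these finitely many $\Delta_{\varphi}$ contains all the others, and hence contains $F$. That particular $\Delta_{\varphi}$ is finitely satisfiable, so $F$ is satisfiable; this being true for every finite $F \subseteq \Delta^{*}$, $\Delta^{*}$ is finitely satisfiable, i.e.\ $\Delta^{*} \in \mathcal{F}$. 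Thus $\Delta^{*}$ is an upper bound for $\mathcal{C}$ in $\mathcal{F}$.

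Zorn's Lemma now yields a maximal element of $\mathcal{F}$, which is the desired maximal finitely satisfiable superset of $\Gamma$. There is essentially no obstacle here: the only step requiring any care is checking that the union of a chain of finitely satisfiable sets is finitely satisfiable, and this uses only the crucial fact that \emph{finite} subsets of a union over a chain already lie inside a single member of the chain. No structural hypothesis on $\mstr$ beyond being an $\amst$ is needed, which is consistent with the statement's assertion that the result holds for \emph{all} $\amst$s.
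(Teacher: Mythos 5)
Your proposal is correct and follows essentially the same route as the paper: Zorn's Lemma applied to the inclusion-ordered family of finitely satisfiable supersets of $\Gamma$, with the union of a chain shown to be finitely satisfiable because any finite subset of the union already lies in a single member of the chain. The paper phrases the chain step as a proof by contradiction, but the content is identical, and your closing observation that no hypothesis beyond $\mstr$ being an $\amst$ is needed matches the paper's placement of this result before the normality assumption is introduced.
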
}

\begin{proof}
    Let $\mathcal{T}=\{\Sigma\subseteq\lang\mid\,\Gamma\subseteq \Sigma\hbox{ and }\Sigma\hbox{ is finitely satisfiable}\}$. Clearly, $\mathcal{T}\ne\emptyset$ since $\Gamma\in \mathcal{T}$. Let $\mathcal{C}\subseteq \mathcal{T}$ be a chain and $\Delta=\displaystyle\bigcup_{\Sigma\in \mathcal{C}}\Sigma$. Clearly, $\Delta\supseteq \Gamma$. We claim that $\Delta$ is finitely satisfiable as well. 

    Suppose the contrary, i.e., $\Delta$ is not finitely satisfiable. Then, there exists some finite $\Delta_0\subseteq\Delta$ such that $\Delta_0$ is not satisfiable. Therefore, as $\mathcal{C}$ is a chain and $\Delta=\displaystyle\bigcup_{\Sigma\in \mathcal{C}}\Sigma$, there exists $\Sigma\in \mathcal{C}$ such that $\Delta_0\subseteq \Sigma$. Since $\Sigma\in \mathcal{C}$, $\Sigma\in \mathcal{T}$, and hence is finitely satisfiable. Since $\Delta_0$ is a finite subset of $\Sigma$, $\Delta_0$ must be satisfiable -- a contradiction. Hence, $\Delta$ is finitely satisfiable. Thus, every chain in $\mathcal{T}$ has an upper bound. So, by Zorn's lemma, $\mathcal{T}$ has a maximal element.  
\end{proof}

Given an $\mathsf{amst}$ $\mstr=(\mathbf{M},\models,\mathcal{P}(\lang))$, we define a map $\Mod: \mathcal{P}(\lang)\to \mathcal{P}(\mathbf{M})$ as follows. For all $\Gamma\subseteq \lang$,
\[
\Mod(\Gamma)=\{m\in \mathbf{M}\mid\,m\models\Gamma\}.
\]

\begin{defn}[\textsc{Complete Set}]
    Given an $\mathsf{amst}$ $\mstr=(\mathbf{M},\models,\mathcal{P}(\lang))$, a set $\Gamma\subseteq \lang$ is said to be \emph{complete} if, $\Mod(\Gamma)\neq\emptyset$ and for all $\alpha\in \lang$, either $\Mod(\Gamma)\subseteq \Mod(\{\alpha\})$ or $\Mod(\Gamma)\subseteq \mathbf{M}\setminus \Mod(\{\alpha\})$. 
\end{defn}

\hypertarget{rem:comp=>sat}{\begin{rem}{\label{rem:comp=>sat}}
    Suppose $\mstr=(\mathbf{M},\models,\mathcal{P}(\lang))$ is an $\amst$ and $\Gamma\subseteq\lang$ is complete. Then, $\Mod(\Gamma)\neq\emptyset$ and hence, $\Gamma$ is satisfiable. Thus, every complete set is satisfiable.
\end{rem}}

As mentioned in the introduction, we will discuss the compactness of special kinds of $\amst$s in this article. These are defined as follows.

\hypertarget{defn:normamst}{\begin{defn}[\textsc{Normal Abstract Model Structure}]{\label{def:normamst}}
    An $\mathsf{amst}$ $\mstr=(\mathbf{M},\models,\mathcal{P}(\lang))$ is said to be \emph{normal} if,  for all $m\in\mathbf{M}$ and for all $\Gamma\subseteq \lang$, $m\models\Gamma$ iff $m\models\{\alpha\}$ for all $\alpha\in\Gamma$. 
\end{defn}}

\begin{defn}[\textsc{Logical Structure induced by an $\amst$}]
    Suppose $\mstr=(\mathbf{M},\models,\mathcal{P}(\lang))$ is an $\amst$. Then, the \emph{logical structure induced by $\mstr$}, denoted by $(\lang,\deduc_{\mstr})$, is such that $\deduc_{\mstr}$ is defined as follows. For all $\Gamma\cup\{\alpha\}\subseteq \lang$, 
    \[
    \Gamma\deduc_{\mstr}\alpha\hbox{ iff, for all } m\in \mathbf{M}, \hbox{ if }m\models\Gamma\hbox{ then }m\models\{\alpha\},\hbox{ i.e., }\Mod(\Gamma)\subseteq\Mod(\{\alpha\}).
    \]
\end{defn}

One can show that the logical structures induced by normal $\amst$s are of \emph{Tarski-type} and that every Tarski-type logical structure is induced by some normal $\amst$. Tarski-type logical structures were defined in \cite{RoyBasuChakraborty2023}. We include the definition below for the sake of the reader.

\hypertarget{defn:Tarski}{\begin{defn}[\textsc{Tarski-type Logical structure}]{\label{def:Tarski}}
 A logical structure $(\lang, \deduc)$ is said to be of \emph{Tarski-type} if $\deduc$ satisfies the following properties.
 \begin{enumerate}[label=(\alph*)]
     \item For all $\Gamma\subseteq \lang$ and  $\alpha\in \Gamma$, $\Gamma\deduc\alpha$. (Reflexivity)
     \item For all $\Gamma\cup\Sigma\cup\{\alpha\}\subseteq \lang$, $\Gamma\deduc\alpha$ and $\Gamma\subseteq \Sigma$ implies that $\Sigma\deduc\alpha$. (Monotonicity)
     \item For all $\Gamma\cup\Sigma\cup\{\alpha\}\subseteq \lang$, $\Gamma\deduc\alpha$ if $\Sigma\deduc\alpha$ and for all $\beta\in \Sigma$, $\Gamma\deduc\beta$. (Transitivity) 
 \end{enumerate}   
\end{defn}}

\hypertarget{rem:operator}{\begin{rem}{\label{rem:operator}}
Suppose $(\lang,\deduc)$ is a logical structure and $\Gamma\subseteq\lang$. Let $C_{\deduc}(\Gamma)$ be the set of all elements of $\lang$ that are $\deduc$-related to $\Gamma$, i.e., $C_{\deduc}(\Gamma)=\{\alpha\in \lang\mid\,\Gamma\deduc \alpha\}$. 
Thus, given a relation $\deduc\,\subseteq\mathcal{P}(\lang)\times\lang$, there is a corresponding operator $C_{\deduc}:\mathcal{P}(\lang)\to\mathcal{P}(\lang)$, where, for any $\Gamma\in\mathcal{P}(\lang)$, $C_{\deduc}(\Gamma)$ is as described above.
Conversely, given an operator $C:\mathcal{P}(\lang)\to\mathcal{P}(\lang)$, we can define a relation $\deduc\,\subseteq\mathcal{P}(\lang)\times\lang$ such that $C=C_{\deduc}$ as follows. For all $\Gamma\cup\{\alpha\}\subseteq\lang$, $\Gamma\deduc\alpha$ iff $\alpha\in C(\Gamma)$. We will thus, move freely back and forth between a relation $\deduc$ and its corresponding operator $C_{\deduc}$ and on some occasions, define a $\deduc\,\subseteq\mathcal{P}(\lang)\times\lang$ such that $C_{\deduc}=C$, for some operator $C:\mathcal{P}(\lang)\to\mathcal{P}(\lang)$ in this very sense. As a consequence of this interchangeability between $\deduc$ and $C_{\deduc}$, the above definition of a Tarski-type logical structure can be rephrased in terms of $C_{\deduc}$ as follows.
\begin{enumerate}[label=(\alph*)]
\item For all $\Gamma\subseteq \lang$, $\Gamma\subseteq C_{\vdash}(\Gamma)$. (Reflexivity)
\item For all $\Gamma,\Sigma\subseteq \lang$, if $\Gamma\subseteq \Sigma$ then $C_{\vdash}(\Gamma)\subseteq C_{\vdash}(\Sigma)$. (Monotonicity)
\item For all $\Gamma,\Sigma\subseteq \lang$, if $\Sigma\subseteq {C_{\vdash}}(\Gamma)$ then ${C_{\vdash}}(\Sigma)\subseteq {C_{\vdash}}(\Gamma)$. (Transitivity)
\end{enumerate}\end{rem}}

\hypertarget{thm:rep_Tarski}{\begin{thm}[\textsc{Representation  for Tarski-type}]{\label{thm:rep_Tarski}}
    \begin{enumerate}[label=(\arabic*)]
        \item \ The logical structure induced by a normal $\amst$ is of Tarski-type.
        \item Given any Tarski-type logical structure $(\lang,\deduc)$, there exists a normal $\mathsf{amst}$ $\mstr$ such that $\lang$ is not satisfiable in $\mstr$ and the logical structure induced by $\mstr$ coincides with $(\lang,\deduc)$.
    \end{enumerate}
\end{thm}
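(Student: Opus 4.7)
The key observation is that normality makes $\Mod$ convert unions into intersections: for any $\Gamma\subseteq\lang$, $\Mod(\Gamma)=\bigcap_{\alpha\in\Gamma}\Mod(\{\alpha\})$. Given this, the three Tarski conditions for $\deduc_\mstr$ follow by routine set-theoretic bookkeeping. For reflexivity, if $\alpha\in\Gamma$ then $\Mod(\Gamma)\subseteq\Mod(\{\alpha\})$ is immediate from the intersection formula. For monotonicity, $\Gamma\subseteq\Sigma$ yields $\Mod(\Sigma)\subseteq\Mod(\Gamma)$, and so any inclusion $\Mod(\Gamma)\subseteq\Mod(\{\alpha\})$ passes to $\Sigma$. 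For transitivity, if $\Sigma\deduc_\mstr\alpha$ and $\Gamma\deduc_\mstr\beta$ for every $\beta\in\Sigma$, then $\Mod(\Gamma)\subseteq\bigcap_{\beta\in\Sigma}\Mod(\{\beta\})=\Mod(\Sigma)\subseteq\Mod(\{\alpha\})$.

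\textbf{Plan for Part (2).} Given a Tarski-type $(\lang,\deduc)$, I will take the models of the desired $\amst$ to be the proper $\deduc$-closed sets and let satisfaction be set-inclusion. Concretely, set
\[
\mathbf{M}=\{\Sigma\subseteq\lang\mid C_{\deduc}(\Sigma)=\Sigma \text{ and } \Sigma\neq\lang\},
\]
and declare $m\models\Gamma$ iff $\Gamma\subseteq m$. Normality is immediate, since $\Gamma\subseteq m$ holds exactly when $\{\alpha\}\subseteq m$ for every $\alpha\in\Gamma$. Also $\lang$ is not satisfiable because no $m\in\mathbf{M}$ contains $\lang$ by construction.

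\textbf{Matching the consequence relations.} Suppose $\Gamma\deduc\alpha$, i.e.\ $\alpha\in C_{\deduc}(\Gamma)$. If $m\in\mathbf{M}$ and $\Gamma\subseteq m$, then by monotonicity $C_{\deduc}(\Gamma)\subseteq C_{\deduc}(m)=m$, so $\alpha\in m$; hence $\Gamma\deduc_\mstr\alpha$. For the converse, I will use that $C_{\deduc}(\Gamma)$ is itself $\deduc$-closed: by reflexivity $C_{\deduc}(\Gamma)\subseteq C_{\deduc}(C_{\deduc}(\Gamma))$, while transitivity applied to $\Sigma=C_{\deduc}(\Gamma)\subseteq C_{\deduc}(\Gamma)$ gives the reverse inclusion. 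If $C_{\deduc}(\Gamma)\neq\lang$, the set $m:=C_{\deduc}(\Gamma)$ lies in $\mathbf{M}$ and contains $\Gamma$ by reflexivity; so $\Gamma\deduc_\mstr\alpha$ forces $\alpha\in m$, i.e.\ $\Gamma\deduc\alpha$. In the degenerate case $C_{\deduc}(\Gamma)=\lang$, the relation $\Gamma\deduc\alpha$ holds for every $\alpha$, so there is nothing to check.

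\textbf{Anticipated obstacle.} The one delicate point is ensuring $\mathbf{M}\neq\emptyset$, which the definition of an $\amst$ demands. This fails only if every $\deduc$-closed set equals $\lang$, which by monotonicity is equivalent to $C_{\deduc}(\emptyset)=\lang$, i.e.\ the degenerate Tarski structure where everything is derivable. In that pathological situation the two required properties (namely that $\lang$ is not satisfiable and that $\deduc_\mstr$ coincides with $\deduc$) are in tension, and I expect the proof either to exclude this case or to handle it by a bespoke one-element dummy construction; otherwise the argument above goes through uniformly.
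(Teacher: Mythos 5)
Your proposal is essentially the paper's own proof: for Part (2) the paper takes as models the characteristic functions $\chi_{C_{\deduc}(\Sigma)}$ of closures of $\deduc$-nontrivial sets, with satisfaction given by membership, which is exactly your ``proper $\deduc$-closed sets under inclusion'' up to the bijection between sets and their characteristic functions, and the two verifications that $\deduc\,=\,\deduc_{\mstr}$ (monotonicity/transitivity for one inclusion, using $C_{\deduc}(\Gamma)$ itself as a countermodel for the other) match step for step, as does the routine Part (1). The degenerate case you flag --- $C_{\deduc}(\emptyset)=\lang$, which forces the constructed model class to be empty even though an $\amst$ requires $\mathbf{M}\neq\emptyset$, and which genuinely cannot be repaired by a dummy model without breaking either normality or the identity $\deduc\,=\,\deduc_{\mstr}$ --- is real and is left unaddressed by the paper's own proof, so your caveat is a point in your favour rather than a gap.
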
}

\begin{proof}
The proof of (1) is a routine verification of the conditions stated in \hyperlink{def:Tarski}{\defref{def:Tarski}}. The main ideas of the proof of (2) can be found in \cite{Beziau1999}. We, however, include the proofs below, for the sake of completeness.

\begin{enumerate}[label=(\arabic*)]
    \item Suppose $\mstr=(\mathbf{M},\models,\pow(\lang))$ is a normal $\amst$ and $(\lang,\deduc_\mstr)$ the logical structure induced by $\mstr$. 
    
    Let $\alpha\in\Gamma\subseteq\lang$ and $m\in\mathbf{M}$ such that $m\models\Gamma$. Since $\mstr$ is normal, $m\models\{\alpha\}$. Thus, $\Gamma\deduc_\mstr\alpha$ for all $\alpha\in\Gamma$.  Hence, $\deduc_\mstr$ satisfies reflexivity.

    Let $\Gamma\cup\Sigma\cup\{\alpha\}\subseteq\lang$, $\Gamma\subseteq\Sigma$ and $\Gamma\deduc_\mstr\alpha$. Moreover, let $m\in\mathbf{M}$ such that $m\models\Sigma$. Since $\mstr$ is normal, this implies that $m\models\{\beta\}$ for all $\beta\in\Sigma$. So, $m\models\{\beta\}$ for all $\beta\in\Gamma$, as $\Gamma\subseteq\Sigma$. Again, by normality, this implies that $m\models\Gamma$. Then, as $\Gamma\deduc_\mstr\alpha$, $m\models\{\alpha\}$. Thus, $m\models\Sigma$ implies $m\models\{\alpha\}$. So, $\Sigma\deduc_\mstr\alpha$. Hence, $\deduc_\mstr$ satisfies monotonicity.

    Let $\Gamma\cup\Sigma\cup\{\alpha\}\subseteq\lang$ such that $\Sigma\deduc_\mstr\alpha$ and $\Gamma\deduc_\mstr\beta$ for all $\beta\in\Sigma$. Moreover, let $m\in\mathbf{M}$ such that $m\models\Gamma$. Since, $\Gamma\deduc_\mstr\beta$ for all $\beta\in\Sigma$, this implies that $m\models\{\beta\}$ for all $\beta\in\Sigma$. Then, as $\mstr$ is normal, $m\models\Sigma$. Now, since $\Sigma\deduc_\mstr\alpha$, we have $m\models\{\alpha\}$. Thus, $m\models\Gamma$ implies $m\models\{\alpha\}$. So, $\Gamma\deduc_\mstr\alpha$. Hence, $\deduc_\mstr$ satisfies transitivity.

    This proves that $(\lang,\deduc_\mstr)$ is a Tarski-type logical structure.
    
    \item Let $(\lang,\deduc)$ be a Tarski-type logical structure, and $C_{\deduc}:\mathcal{P}(\lang)\to \mathcal{P}(\lang)$ the operator corresponding to $\deduc$ (as described in \hyperlink{rem:operator}{\remref{rem:operator}}). We first make the following couple of definitions.
    \begin{enumerate}[label=(\roman*)]
        \item A set $\Gamma\subseteq \lang$ is said to be \emph{$\deduc$-nontrivial} if $C_{\deduc}(\Gamma)\ne\lang$, i.e., there exists $\beta\in\lang$ such that $\Gamma\not\deduc\beta$. $\Gamma$ is said to be \emph{$\deduc$-trivial} otherwise.
        \item A set $\Gamma\subseteq \lang$ is said to be \emph{$\deduc$-closed} if $C_{\deduc}(\Gamma)=\Gamma$, i.e., for all $\beta\in\lang$, $\Gamma\deduc\beta$ iff $\beta\in\Gamma$.
    \end{enumerate}
    For each $\Sigma\subseteq \lang$, let $\chi_\Sigma:\lang\to\{0,1\}$ be the characteristic function of $\Sigma$, i.e., 
    \[
    \chi_\Sigma(\alpha)=\begin{cases}
    1&\hbox{if }\alpha\in\Sigma\\
    0&\hbox{otherwise}.
    \end{cases}
    \]
We now define an abstract model structure $\mathfrak{B}=(\mathbf{B},\models,\mathcal{P}(\lang))$, where
\begin{itemize}
    \item $\mathbf{B}=\{\chi_{C_{\deduc}(\Sigma)}\mid\,\Sigma\hbox{ is $\deduc$-nontrivial}\}$, and 
    \item for all $\Gamma\cup\Sigma\subseteq \lang$, with $\Sigma$ $\deduc$-nontrivial, $\chi_{C_{\deduc}(\Sigma)}\models\Gamma$ iff $\chi_{C_{\deduc}(\Sigma)}(\alpha)=1$ for all $\alpha\in \Gamma$.
\end{itemize}
It follows from the above definition of $\mathfrak{B}$ that, for any $\chi_{C_{\deduc}(\Sigma)}\in\mathbf{B}$ and any $\Gamma\subseteq\lang$,
\[
\begin{array}{llll}
     \chi_{C_{\deduc}(\Sigma)}\models\Gamma&\hbox{iff}&\chi_{C_{\deduc}(\Sigma)}(\alpha)=1&\hbox{for all }\alpha\in                                  \Gamma\\
     &\hbox{iff}&\chi_{C_{\deduc}(\Sigma)}\models \{\alpha\}&\hbox{for all }\alpha\in\Gamma
\end{array}
\]
This shows that $\mathfrak{B}$ is a normal $\amst$.

We now show that the logical structure $(\lang,\deduc_\mathfrak{B})$, induced by $\mathfrak{B}$, is such that $\deduc\,=\,\deduc_{\mathfrak{B}}$. 

Suppose $\Gamma\cup\{\alpha\}\subseteq \lang$ such that $\Gamma\deduc\alpha$ but $\Gamma\not\deduc_{\mathfrak{B}}\alpha$. Since $\Gamma\not\deduc_{\mathfrak{B}}\alpha$, there exists a $\deduc$-nontrivial $\Sigma\subseteq \lang$ such that $\chi_{C_{\deduc}(\Sigma)}\models \Gamma$ but $\chi_{C_{\deduc}(\Sigma)}\not\models \{\alpha\}$. Now, $\chi_{C_{\deduc}(\Sigma)}\models \Gamma$ implies that $\chi_{C_{\deduc}(\Sigma)}(\beta)=1$ for all $\beta\in\Gamma$, which means that $\Gamma\subseteq C_{\deduc}(\Sigma)$. On the other hand, by similar reasoning, $\chi_{C_{\deduc}(\Sigma)}\not\models \{\alpha\}$ implies that $\alpha\notin C_{\deduc}(\Sigma)$. However, since $\Gamma\subseteq C_{\deduc}(\Sigma)$ and $\alpha\in C_{\deduc}(\Gamma)$, by transitivity of $C_{\deduc}$, $\alpha\in C_{\deduc}(\Sigma)$. This is a contradiction. Thus, $\Gamma\deduc\alpha$ implies $\Gamma\deduc_\mathfrak{B}\alpha$, i.e.,  $\deduc\,\subseteq\,\deduc_{\mathfrak{B}}$.

Next, suppose $\Gamma\cup\{\alpha\}\subseteq\lang$ such that $\Gamma\deduc_{\mathfrak{B}}\alpha$ but $\Gamma\not\deduc\alpha$, i.e., $\alpha\notin C_{\deduc}(\Gamma)$. Thus, $\Gamma$ is $\deduc$-nontrivial and hence, $\chi_{C_{\deduc}(\Gamma)}\in \mathbf{B}$. Now, by reflexivity, $\chi_{C_{\deduc}(\Gamma)}\models \Gamma$. However, as  $\alpha\notin C_{\deduc}(\Gamma)$, $\chi_{C_{\deduc}(\Gamma)}(\alpha)\ne 1$, which implies that $\chi_{C_{\deduc}(\Gamma)}\not\models \{\alpha\}$. So, $\Gamma\not\deduc_{\mathfrak{B}}\alpha$. This is a contradiction. Thus, $\Gamma\deduc_{\mathfrak{B}}\alpha$ implies $\Gamma\deduc\alpha$, i.e.,  $\deduc_{\mathfrak{B}}\,\subseteq\,\deduc$.

Hence, $\deduc\,=\,\deduc_\mathfrak{B}$.

Finally, to see that $\lang$ is not satisfiable in $\mathfrak{B}$, we first note that, since $(\lang,\deduc)$ is of Tarski-type, $\lang$ is not $\deduc$-nontrivial. Now, suppose $\Sigma\subseteq\lang$ is $\deduc$-nontrivial such that $\chi_{C_{\deduc}(\Sigma)}\models\lang$. Then, $\chi_{C_{\deduc}(\Sigma)}(\alpha)=1$, i.e., $\alpha\in\Sigma$ for all $\alpha\in\lang$. This implies that $\Sigma=\lang$, which contradicts the assumption that $\Sigma$ is $\deduc$-nontrivial. Hence, $\lang$ is not satisfiable in $\mathfrak{B}$.
\end{enumerate} 
\end{proof}

\hypertarget{thm:nsat=>triv}{\begin{thm}{\label{thm:nsat=>triv}}
Suppose $\mstr=(\mathbf{M},\models,\pow(\lang))$ is an $\amst$ and $(\lang,\deduc_\mstr)$ the logical structure induced by $\mstr$. If $\Gamma\subseteq\lang$ is not satisfiable, then it is $\deduc_\mstr$-trivial. The converse holds if $\mstr$ is normal and $\lang$ is not satisfiable.
\end{thm}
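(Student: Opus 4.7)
The first implication is essentially a vacuous-truth argument. The plan is to assume $\Gamma \subseteq \lang$ is not satisfiable, i.e., $\Mod(\Gamma) = \emptyset$, and verify that $\Gamma \deduc_\mstr \alpha$ for every $\alpha \in \lang$. Unwinding the definition of $\deduc_\mstr$, this amounts to checking that for every $m \in \mathbf{M}$, $m \models \Gamma$ implies $m \models \{\alpha\}$. Since the hypothesis $m \models \Gamma$ is never satisfied (because $\Mod(\Gamma) = \emptyset$), the implication holds vacuously. Hence $C_{\deduc_\mstr}(\Gamma) = \lang$, meaning $\Gamma$ is $\deduc_\mstr$-trivial.

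For the converse, I would argue by contrapositive under the two standing assumptions that $\mstr$ is normal and $\lang$ is not satisfiable. Suppose $\Gamma$ is $\deduc_\mstr$-trivial, so $\Gamma \deduc_\mstr \alpha$ for every $\alpha \in \lang$. To derive that $\Gamma$ is not satisfiable, assume toward contradiction that there exists $m \in \mathbf{M}$ with $m \models \Gamma$. Then, for each $\alpha \in \lang$, the relation $\Gamma \deduc_\mstr \alpha$ forces $m \models \{\alpha\}$. At this point normality of $\mstr$ enters: since $m \models \{\alpha\}$ for every $\alpha \in \lang$, the normality condition of \defref{def:normamst} gives $m \models \lang$, so $\lang$ is satisfiable in $\mstr$, contradicting the hypothesis on $\lang$.

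Neither direction seems to present a real obstacle; the main thing to be careful about is to use each assumption exactly where it is needed. The assumption that $\mstr$ is normal is used only in the converse direction (to pass from ``$m$ satisfies every singleton from $\lang$'' to ``$m$ satisfies $\lang$''), and the assumption that $\lang$ is not satisfiable is used only to close the converse contradiction; both are genuinely indispensable, since without normality the step from singleton satisfactions to joint satisfaction fails, and if $\lang$ happened to be satisfiable then $\lang$ itself would be $\deduc_\mstr$-trivial (by reflexivity, since $\deduc_\mstr$ is Tarski-type by \thmref{thm:rep_Tarski}) yet satisfiable, giving an immediate counterexample to the converse.
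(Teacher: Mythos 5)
Your proposal is correct and follows essentially the same argument as the paper: the first direction is the paper's contradiction argument recast as a direct vacuous-truth observation (both amount to noting that $\Mod(\Gamma)=\emptyset$ makes $\Gamma\deduc_\mstr\alpha$ hold for all $\alpha$), and the converse is identical, using normality to pass from $m\models\{\alpha\}$ for all $\alpha\in\lang$ to $m\models\lang$ and then contradicting the unsatisfiability of $\lang$. Your closing remarks on the indispensability of each hypothesis are accurate but go beyond what the paper records.
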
}

\begin{proof}
    Let $\Gamma\subseteq\lang$ be not satisfiable. Suppose $\Gamma$ is $\deduc_\mstr$-nontrivial. Then, there exists $\alpha\in\lang$ such that $\Gamma\not\deduc_\mstr\alpha$. So, there exists $m\in\mathbf{M}$ such that $m\models\Gamma$ but $m\not\models\{\alpha\}$. Now, as $m\models\Gamma$, $\Gamma$ is satisfiable. This is a contradiction. Thus, $\Gamma$ must be $\deduc_\mstr$-trivial.

    Now, suppose $\mstr$ is normal and $\lang$ is not satisfiable. Let $\Gamma\subseteq\lang$ be $\deduc_\mstr$-trivial. Suppose $\Gamma$ is satisfiable. Then, there exists $m\in\mathbf{M}$ such that $m\models\Gamma$. Now, since $\Gamma$ is $\deduc_\mstr$-trivial, $\Gamma\deduc_\mstr\alpha$ for all $\alpha\in\lang$. Thus, $m\models\{\alpha\}$ for all $\alpha\in\lang$. Then, by normality of $\mstr$, $m\models\lang$. This, however, contradicts the assumption that $\lang$ is not satisfiable. Hence, $\Gamma$ is not satisfiable.
\end{proof}

\hypertarget{thm:cumulative}{\begin{thm}[\textsc{Properties of} $\Mod$]{\label{thm:cumulative}}
Suppose $\mstr=(\mathbf{M},\models,\mathcal{P}(\lang))$ is a normal $\mathsf{amst}$. Then, the following statements hold.
\begin{enumerate}[label=(\arabic*)]
    \hypertarget{thm:cumulative(1)}{\item For all $\Gamma\subseteq\lang$, $\Mod(\Gamma)=\displaystyle\bigcap_{\alpha\in \Gamma}\Mod(\{\alpha\})$. \label{thm:cumulative(1)}}
    \hypertarget{thm:cumulative(2)}{\item For all $\Gamma\subseteq\Sigma\subseteq\lang$, $\Mod(\Sigma)\subseteq \Mod(\Gamma)$. \label{thm:cumulative(2)}}
    \hypertarget{thm:cumulative(3)}{\item For any family $(\Sigma_i)_{i\in I}$ of subsets of $\lang$,  
    \[
    \Mod\left(\displaystyle\bigcup_{i\in I}\Sigma_i\right)=\displaystyle\bigcap_{i\in I}\Mod(\Sigma_i)\quad\hbox{ and }\quad\displaystyle\bigcup_{i\in I}\Mod(\Sigma_i)\subseteq\Mod\left(\displaystyle\bigcap_{i\in I}\Sigma_i\right).
    \]\label{thm:cumulative(3)}}
\end{enumerate}
\end{thm}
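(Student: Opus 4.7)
The plan is to observe that (1) is essentially a direct translation of the normality condition into the language of $\Mod$, and then to derive (2) and (3) as bookkeeping consequences of (1) together with elementary facts about intersections and unions of indexed families of sets.

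For (1), I would unfold the definitions: $m \in \Mod(\Gamma)$ means $m \models \Gamma$, which by normality (\defref{def:normamst}) is equivalent to $m \models \{\alpha\}$ for every $\alpha \in \Gamma$; this in turn is exactly $m \in \Mod(\{\alpha\})$ for every $\alpha \in \Gamma$, i.e. $m \in \bigcap_{\alpha \in \Gamma}\Mod(\{\alpha\})$. This is the only place where normality is used; it reduces every statement about $\Mod$ of a set to a statement about a family of pointwise $\Mod(\{\alpha\})$'s, after which the proof becomes purely set-theoretic.

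For (2), I would simply apply (1) twice: since $\Gamma \subseteq \Sigma$, we have
\[
\Mod(\Sigma) \;=\; \bigcap_{\alpha \in \Sigma}\Mod(\{\alpha\}) \;\subseteq\; \bigcap_{\alpha \in \Gamma}\Mod(\{\alpha\}) \;=\; \Mod(\Gamma),
\]
using the fact that intersecting over a larger index set gives a smaller set. For the first equality of (3), I would again invoke (1) and rewrite the intersection over $\bigcup_i \Sigma_i$ as an iterated intersection, splitting it as $\bigcap_{i\in I}\bigcap_{\alpha\in\Sigma_i}\Mod(\{\alpha\}) = \bigcap_{i\in I}\Mod(\Sigma_i)$. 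For the inclusion $\bigcup_i \Mod(\Sigma_i) \subseteq \Mod(\bigcap_i \Sigma_i)$, I would apply (2): since $\bigcap_{j\in I}\Sigma_j \subseteq \Sigma_i$ for each fixed $i$, (2) yields $\Mod(\Sigma_i) \subseteq \Mod(\bigcap_{j\in I}\Sigma_j)$, and taking the union over $i$ preserves the inclusion.

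I do not anticipate a genuine obstacle here; the only step that has any logical content is (1), which is a one-line invocation of normality, while (2) and (3) are formal consequences. The one point worth being careful about is the vacuous/empty-intersection case (e.g.\ when $\Gamma = \emptyset$ in (1), the intersection is $\mathbf{M}$, which matches $\Mod(\emptyset)$ since $m \models \emptyset$ holds vacuously by normality), so I would briefly note that the argument goes through uniformly in this edge case.
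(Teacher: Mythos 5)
Your proposal is correct and follows essentially the same route as the paper: prove (1) by unfolding normality (handling $\Gamma=\emptyset$ via the vacuous intersection, exactly as the paper does), then obtain (2) and (3) as set-theoretic consequences. The only cosmetic difference is that for the equality in (3) the paper proves the inclusion $\bigcap_{i}\Mod(\Sigma_i)\subseteq\Mod\bigl(\bigcup_{i}\Sigma_i\bigr)$ by a direct normality argument rather than by regrouping the iterated intersection from (1), but the two arguments are interchangeable.
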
}

\begin{proof}
\begin{enumerate}[label=(\arabic*)]
    \item Let $\Gamma\subseteq \lang$. 

If $\Gamma=\emptyset$, then $\displaystyle\bigcap_{\alpha\in \Gamma}\Mod(\{\alpha\})=\mathbf{M}$. We claim that $\Mod(\emptyset)=\mathbf{M}$. 

Suppose the contrary, i.e., $\Mod(\emptyset)\ne\mathbf{M}$. Then, there exists $m\in \mathbf{M}$ such that $m\not\models\emptyset$. Now, as $\mstr$ is normal, this implies that there exists $\alpha\in \emptyset$ such that $m\not\models\{\alpha\}$. This is an impossibility and so, $\Mod(\emptyset)=\mathbf{M}$. Thus (1) holds for $\Gamma=\emptyset$. 

Now, suppose $\Gamma\ne \emptyset$. Now, for any $m\in\mathbf{M}$,
\[
\begin{array}{llll}
     m\in\Mod(\Gamma)&\hbox{iff}&m\models\Gamma\\
     &\hbox{iff}&m\models\{\alpha\}&\hbox{ for all $\alpha\in\Gamma$, since $\mstr$ is normal}\\
     &\hbox{iff}&m\in\Mod(\{\alpha\})&\hbox{ for all }\alpha\in\Gamma\\
     &\hbox{iff}&m\in\displaystyle\bigcap_{\alpha\in \Gamma}\Mod(\{\alpha\})
\end{array}
\]
Thus, $\Mod(\Gamma)=\displaystyle\bigcap_{\alpha\in \Gamma}\Mod(\{\alpha\})$ for all $\Gamma\subseteq\lang$.

\item Let $\Gamma\subseteq\Sigma\subseteq\lang$. Then,
\[
\begin{array}{lcll}
     \Mod(\Sigma)&=&\displaystyle\bigcap_{\alpha\in \Sigma}\Mod(\{\alpha\}),&\hbox{by (1)}\\
     &\subseteq&\displaystyle\bigcap_{\alpha\in \Gamma}\Mod(\{\alpha\}),&\hbox{since }\Gamma\subseteq\Sigma\\
     &=&\Mod(\Gamma),&\hbox{again by (1)}.
\end{array}
\]

\item It follows from (2), that $\Mod\left(\displaystyle\bigcup_{i\in I}\Sigma_i\right)\subseteq\displaystyle\bigcap_{i\in I}\Mod(\Sigma_i)$, since $\Sigma_i\subseteq\displaystyle\bigcup_{i\in I}\Sigma_i$, and hence, $\Mod\left(\displaystyle\bigcup_{i\in I}\Sigma_i\right)\subseteq\Mod(\Sigma_i)$, for each $i\in I$. 

Now, let $m\in\displaystyle\bigcap_{i\in I}\Mod(\Sigma_i)$. Then, $m\in\Mod(\Sigma_i)$, i.e., $m\models\Sigma_i$, for all $i\in I$. Since $\mstr$ is normal, this implies that $m\models\displaystyle\bigcup_{i\in I}\Sigma_i$. So, $m\in\Mod\left(\displaystyle\bigcup_{i\in I}\Sigma_i\right)$. Thus, $\displaystyle\bigcap_{i\in I}\Mod(\Sigma_i)\subseteq\Mod\left(\displaystyle\bigcup_{i\in I}\Sigma_i\right)$. Hence, $\Mod\left(\displaystyle\bigcup_{i\in I}\Sigma_i\right)=\displaystyle\bigcap_{i\in I}\Mod(\Sigma_i)$.

Again, by (2), $\displaystyle\bigcup\Mod(\Sigma_i)\subseteq\Mod\left(\bigcap_{i\in I}\Sigma_i\right)$, since $\displaystyle\bigcap_{i\in I}\Sigma_i\subseteq\Sigma_i$, and hence, $\Mod(\Sigma_i)\subseteq\Mod\left(\displaystyle\bigcap_{i\in I}\Sigma_i\right)$, for each $i\in I$.
\end{enumerate}
\end{proof}

{\hypertarget{subsec:com-compl}{\subsection{Compactness and Maximal Satisfiablity}}\label{subsec:com-compl}}

A common proof of the compactness theorem for classical propositional logic (or even for first-order logic) rests on the crucial step of extending a finitely satisfiable set to a nontrivial maximal finitely satisfiable one. We show in this section that this process can be replicated for normal $\amst$s.

\hypertarget{thm:maxfinsat=>closed}{\begin{thm}{\label{thm:maxfinsat=>closed}}
Suppose $\mstr=(\mathbf{M},\models,\mathcal{P}(\lang))$ is a normal $\mathsf{amst}$ and $(\lang,\deduc_{\mstr})$ the logical structure induced by $\mstr$. Let $\Gamma\subseteq \lang$ be maximal finitely satisfiable. If $\Gamma$ is satisfiable, then it is $\deduc_{\mstr}$-closed.
\end{thm}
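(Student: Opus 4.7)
The plan is to prove the two inclusions $\Gamma \subseteq C_{\deduc_{\mstr}}(\Gamma)$ and $C_{\deduc_{\mstr}}(\Gamma) \subseteq \Gamma$ separately. The first inclusion is immediate: by part (1) of \hyperlink{thm:rep_Tarski}{\thmref{thm:rep_Tarski}}, $\deduc_{\mstr}$ is of Tarski-type, so it is reflexive, hence $\Gamma \subseteq C_{\deduc_{\mstr}}(\Gamma)$ holds for every $\Gamma$. All the action is in the reverse inclusion, where we must use both hypotheses on $\Gamma$ (satisfiability and maximal finite satisfiability).

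For the reverse inclusion, I would take an arbitrary $\beta \in \lang$ with $\Gamma \deduc_{\mstr} \beta$ and aim to conclude $\beta \in \Gamma$. The strategy is to show that $\Gamma \cup \{\beta\}$ is finitely satisfiable (in fact, satisfiable) and then invoke the maximality of $\Gamma$ to force $\Gamma \cup \{\beta\} = \Gamma$. Since $\Gamma$ is satisfiable by hypothesis, pick some $m \in \mathbf{M}$ with $m \models \Gamma$; by the normality of $\mstr$, this gives $m \models \{\alpha\}$ for every $\alpha \in \Gamma$. The assumption $\Gamma \deduc_{\mstr} \beta$ means $\Mod(\Gamma) \subseteq \Mod(\{\beta\})$, so $m \models \{\beta\}$ as well. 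Thus $m \models \{\gamma\}$ for every $\gamma \in \Gamma \cup \{\beta\}$, and applying normality in the other direction gives $m \models \Gamma \cup \{\beta\}$. In particular, $\Gamma \cup \{\beta\}$ is satisfiable, so every finite subset of it is satisfiable, so $\Gamma \cup \{\beta\}$ is finitely satisfiable.

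At this point the maximal finite satisfiability of $\Gamma$ finishes the job: $\Gamma \cup \{\beta\}$ is a finitely satisfiable superset of $\Gamma$, and $\Gamma$ admits no proper finitely satisfiable superset, so $\Gamma \cup \{\beta\} = \Gamma$, which yields $\beta \in \Gamma$. Combining both directions gives $C_{\deduc_{\mstr}}(\Gamma) = \Gamma$, i.e., $\Gamma$ is $\deduc_{\mstr}$-closed.

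I do not expect a real obstacle here: the argument is essentially a single application of the witness model $m$ plus normality (used once in each direction, via \hyperlink{thm:cumulative}{\thmref{thm:cumulative}}\hyperlink{thm:cumulative(1)}{(1)} or directly from \hyperlink{def:normamst}{\defref{def:normamst}}), followed by the standard maximality squeeze. The only subtlety worth flagging is that the hypothesis ``$\Gamma$ is satisfiable'' is genuinely needed and not redundant with ``maximal finitely satisfiable,'' because for a general $\amst$ the existence of a single model of all of $\Gamma$ (rather than merely one for each finite piece) is precisely what lets us transport $\beta$ into $\Gamma$ through normality.
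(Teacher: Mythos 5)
Your proof is correct and takes essentially the same approach as the paper: both arguments hinge on using the witness model $m\models\Gamma$, transporting satisfaction to the candidate formula via normality and the definition of $\deduc_{\mstr}$, and then invoking maximal finite satisfiability. The only difference is presentational -- you argue directly that $\Gamma\cup\{\beta\}$ is finitely satisfiable and squeeze with maximality, whereas the paper runs the same reasoning as a proof by contradiction through a finite unsatisfiable subset $\Delta_0\cup\{\alpha\}$; your direct arrangement is, if anything, slightly cleaner.
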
}

\begin{proof}
Suppose $\Gamma$ is satisfiable, i.e., there exists $m\in \mathbf{M}$ such that $m\models \Gamma$, but $\Gamma$ is not $\deduc_{\mstr}$-closed. Since $\mstr$ is normal, by \hyperlink{thm:rep_Tarski}{\thmref{thm:rep_Tarski}}, $(\lang,\deduc_\mstr)$ is of Tarski-type. Thus, $\Gamma\deduc_\mstr\alpha$ for all $\alpha\in\Gamma$. Then, as $\Gamma$ is not $\deduc_\mstr$-closed, there exists $\alpha\in \lang$ such that $\Gamma\deduc_{\mstr}\alpha$ but $\alpha\notin \Gamma$. Now, since $\Gamma$ is maximal finitely satisfiable and $\Gamma\subsetneq \Gamma\cup\{\alpha\}$, $\Gamma\cup\{\alpha\}$ is not finitely satisfiable. So, there exists a finite $\Gamma_0\subseteq \Gamma\cup\{\alpha\}$, which is not satisfiable. 

If $\Gamma_0\subseteq \Gamma$, then this would imply that $\Gamma$ is not finitely satisfiable, contrary to our hypothesis. Thus, $\alpha\in \Gamma_0$ and hence, $\Gamma_0=\Delta_0\cup\{\alpha\}$ for some finite $\Delta_0\subseteq \Gamma$. 
    
Since $\mstr$ is normal, $m\models\Gamma$ and $\Delta_0\subseteq\Gamma$, $m\models \Delta_0$ as well. Then, as $\Delta_0\cup\{\alpha\}$ is not satisfiable, $m\not\models\{\alpha\}$. This implies that $\Gamma\not\deduc_{\mstr}\alpha$, which is a contradiction. So, $\alpha\in \Gamma$. Hence, $\Gamma$ is $\deduc_{\mstr}$-closed. 
\end{proof}

\hypertarget{cor:nontriv=>sat}{\begin{cor}{\label{cor:nontriv=>sat}}
Suppose $\mstr=(\mathbf{M},\models,\mathcal{P}(\lang))$ is a normal $\mathsf{amst}$ and $(\lang,\deduc_{\mstr})$ the logical structure induced by $\mstr$. Let $\Gamma\subsetneq \lang$ be maximal finitely satisfiable. Then $\Gamma$ is satisfiable iff it is $\deduc_{\mstr}$-nontrivial.
\end{cor}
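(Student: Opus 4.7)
Both directions fall out of the previous two results almost immediately; the proof is essentially a bookkeeping step. I will split on the biconditional.

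For the forward direction, suppose $\Gamma$ is satisfiable. Since $\Gamma$ is additionally maximal finitely satisfiable and $\mstr$ is normal, \hyperlink{thm:maxfinsat=>closed}{\thmref{thm:maxfinsat=>closed}} applies and yields that $\Gamma$ is $\deduc_\mstr$-closed, i.e.\ $C_{\deduc_\mstr}(\Gamma)=\Gamma$. Using the hypothesis $\Gamma\subsetneq\lang$, I pick any $\beta\in\lang\setminus\Gamma$; then $\beta\notin C_{\deduc_\mstr}(\Gamma)$, so $\Gamma\not\deduc_\mstr\beta$. By the definition stated in (i) inside the proof of \hyperlink{thm:rep_Tarski}{\thmref{thm:rep_Tarski}}, this is exactly the statement that $\Gamma$ is $\deduc_\mstr$-nontrivial.

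For the reverse direction, suppose $\Gamma$ is $\deduc_\mstr$-nontrivial; I argue the contrapositive of ``$\Gamma$ satisfiable.'' If $\Gamma$ were not satisfiable, then the first assertion of \hyperlink{thm:nsat=>triv}{\thmref{thm:nsat=>triv}} (which does not require normality and does not depend on maximality) would force $\Gamma$ to be $\deduc_\mstr$-trivial, contradicting the hypothesis. Hence $\Gamma$ is satisfiable.

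The argument uses neither maximality nor the proper inclusion $\Gamma\subsetneq\lang$ for the reverse direction; both of those are needed only in the forward direction, where $\Gamma\subsetneq\lang$ is what supplies a witness $\beta$ to $\deduc_\mstr$-nontriviality. There is no real obstacle: the only thing worth flagging is that one must invoke \hyperlink{thm:maxfinsat=>closed}{\thmref{thm:maxfinsat=>closed}} (rather than trying to produce $\beta$ by hand) in order to conclude $\deduc_\mstr$-closedness from satisfiability plus maximality, since otherwise one has no handle on consequences of $\Gamma$ lying outside $\Gamma$ itself.
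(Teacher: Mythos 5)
Your proof is correct and follows essentially the same route as the paper's: the forward direction invokes \hyperlink{thm:maxfinsat=>closed}{\thmref{thm:maxfinsat=>closed}} to get $\deduc_\mstr$-closedness and then uses $\Gamma\subsetneq\lang$ to extract a witness of nontriviality, while the reverse direction is the contrapositive of the first assertion of \hyperlink{thm:nsat=>triv}{\thmref{thm:nsat=>triv}}. Your added remarks about which hypotheses each direction actually uses are accurate.
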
}

\begin{proof}
Suppose $\Gamma$ is satisfiable. Then, by \hyperlink{thm:maxfinsat=>closed}{\thmref{thm:maxfinsat=>closed}}, $\Gamma$ is $\deduc_{\mstr}$-closed. Let $\alpha\in \lang\setminus\Gamma$ (such an $\alpha$ exists as $\Gamma\subsetneq \lang$). Now, since $\Gamma$ is $\deduc_{\mstr}$-closed, $\Gamma\not\deduc_{\mstr}\alpha$. Thus, $\Gamma$ is $\deduc_{\mstr}$-nontrivial.

Conversely, suppose $\Gamma$ is $\deduc_{\mstr}$-nontrivial. Then, by \hyperlink{thm:nsat=>triv}{\thmref{thm:nsat=>triv}}, $\Gamma$ is satisfiable.
\end{proof}

Given an $\amst$ $\mstr=(\mathbf{M},\models,\mathcal{P}(\lang))$, we define a map $\Th:\mathcal{P}(\mathbf{M})\to \mathcal{P}(\lang)$ as follows. For any $X\subseteq\mathbf{M}$,
\[
\Th(X)=\{\alpha\in \lang\mid\,m\models \{\alpha\}\hbox{ for all }m\in X\}
\]

We now list some properties of the $\Th$-operator in the following theorem.

\hypertarget{thm:cumulative_th}{\begin{thm}[\textsc{Properties of}~$\Th$]{\label{thm:cumulative_th}}
    Suppose $\mstr=(\mathbf{M},\models,\mathcal{P}(\lang))$ is an $\amst$. Let $\Th:\pow(\mathbf{M})\to\pow(\lang)$ be as defined above. Then, the following statements hold.
    \begin{enumerate}[label=(\arabic*)]
        \hypertarget{thm:cumulative_th(1)}{\item For all $X\subseteq\mathbf{M}$, $\Th(X)=\displaystyle\bigcap_{m\in X}\Th(\{m\})$.  \label{thm:cumulative_th(1)}}
        \hypertarget{thm:cumulative_th(2)}{\item For all $X\subseteq Y\subseteq\mathbf{M}$, $\Th(Y)\subseteq \Th(X)$.\label{thm:cumulative_th(2)}}
        \hypertarget{thm:cumulative_th(3)}{\item If $X=\emptyset$, then $\Th(X)=\lang$. The converse holds if $\lang$ is not finitely satisfiable.\label{thm:cumulative_th(3)}}
        \hypertarget{thm:cumulative_th(4)}{\item For any family $(X_i)_{i\in I}$ of subsets of $\mathbf{M}$,
        \[
        \Th\left(\bigcup_{i\in I}X_i\right)=\bigcap_{i\in I}\Th(X_i)\quad\hbox{ and }\quad\bigcup_{i\in I}\Th(X_i)\subseteq \Th\left(\bigcap_{i\in I}X_i\right).
        \]\label{thm:cumulative_th(4)}}
        \hypertarget{thm:cumulative_th(5)}{\item Suppose $\mstr$ is normal and let $(\lang,\deduc_\mstr)$ be the logical structure induced by $\mstr$. Then, for all $\Sigma\subseteq \lang$, $\Sigma\subseteq\Th(\Mod(\Sigma))$. Moreover, for all $\Sigma\subsetneq \lang$, $\Th(\Mod(\Sigma))=\Sigma$ iff $\Sigma$ is satisfiable and $\deduc_{\mstr}$-closed.\label{thm:cumulative_th(5)}}
    \end{enumerate}
\end{thm}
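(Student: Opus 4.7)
The plan is to dispatch parts (1), (2), and (4) by unfolding the definition of $\Th$, treat (3) by vacuous reasoning plus a contrapositive, and reserve normality and the machinery of $\deduc_{\mstr}$ for part (5). All five parts are set-theoretic manipulations once the right reformulations are in place; no deep idea is required.

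For (1), I would observe that $\alpha \in \Th(X)$ unfolds to ``$m \models \{\alpha\}$ for every $m \in X$'', which is precisely ``$\alpha \in \Th(\{m\})$ for every $m \in X$'', hence $\alpha \in \bigcap_{m \in X}\Th(\{m\})$; the case $X=\emptyset$ holds vacuously on both sides, with both equal to $\lang$. Part (2) is an immediate consequence: if $X \subseteq Y$, the intersection ranging over $Y$ is contained in the one over $X$. For (4), substituting $\bigcup_{i\in I} X_i$ into (1) and regrouping the double indexed intersection yields $\Th\bigl(\bigcup_i X_i\bigr) = \bigcap_i \Th(X_i)$; the inclusion $\bigcup_i \Th(X_i) \subseteq \Th\bigl(\bigcap_i X_i\bigr)$ follows from (2), since $\bigcap_i X_i \subseteq X_j$ forces $\Th(X_j) \subseteq \Th\bigl(\bigcap_i X_i\bigr)$ for each $j$.

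For (3), the forward implication is vacuous. For the converse I would argue by contrapositive: assume $X \neq \emptyset$ and fix $m \in X$; then (2) gives $\Th(X) \subseteq \Th(\{m\})$, so $\Th(X) = \lang$ would force $m \models \{\alpha\}$ for every $\alpha \in \lang$, from which one must derive a contradiction with $\lang$ being not finitely satisfiable. I expect this to be the one delicate point in the theorem, since a single model witnessing every singleton does not, in a general $\amst$, automatically witness every finite subset; the argument hinges on reading non-finite-satisfiability of $\lang$ as the existence of some explicit finite $\lang_0 \subseteq \lang$ not satisfied by any element of $\mathbf{M}$ (in particular not by $m$) and then exhibiting $\alpha \in \lang_0$ with $m \not\models\{\alpha\}$.

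Finally, (5) uses normality throughout. The inclusion $\Sigma \subseteq \Th(\Mod(\Sigma))$ is immediate: for $\alpha \in \Sigma$ and $m \in \Mod(\Sigma)$, normality of $\mstr$ yields $m \models \{\alpha\}$, so $\alpha \in \Th(\Mod(\Sigma))$. For the ``iff'' with $\Sigma \subsetneq \lang$, the direction $(\Leftarrow)$ uses $\deduc_{\mstr}$-closedness: any $\alpha \in \Th(\Mod(\Sigma))$ satisfies $\Mod(\Sigma) \subseteq \Mod(\{\alpha\})$, i.e.\ $\Sigma \deduc_{\mstr} \alpha$, whence $\alpha \in \Sigma$; combining with the already-established $\Sigma \subseteq \Th(\Mod(\Sigma))$ gives equality. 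For $(\Rightarrow)$, if $\Sigma$ were unsatisfiable then $\Mod(\Sigma) = \emptyset$ and $\Th(\Mod(\Sigma)) = \lang$ by (3), contradicting $\Sigma \subsetneq \lang$; and closedness follows because $\Sigma \deduc_{\mstr} \alpha$ forces $\alpha \in \Th(\Mod(\Sigma)) = \Sigma$ via the unfolded definition of $\deduc_{\mstr}$.
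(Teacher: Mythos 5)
Your handling of parts (1), (2), (4) and (5) is correct and follows essentially the same route as the paper: unfold the definition of $\Th$ for (1), derive (2) and (4) from it, and for (5) observe that $\alpha\in\Th(\Mod(\Sigma))$ is literally the condition $\Mod(\Sigma)\subseteq\Mod(\{\alpha\})$, i.e.\ $\Sigma\deduc_{\mstr}\alpha$, so that $\Th(\Mod(\Sigma))$ is the $\deduc_{\mstr}$-closure of $\Sigma$ and the equivalence with ``satisfiable and $\deduc_{\mstr}$-closed'' falls out. Two small remarks on (5): in the $(\Rightarrow)$ direction you should also record that $\alpha\in\Sigma$ implies $\Sigma\deduc_{\mstr}\alpha$ (normality/reflexivity), which is the half of closedness your sketch omits; and your $(\Leftarrow)$ direction is slightly cleaner than the paper's, since it never needs the satisfiability hypothesis (for $\Sigma\subsetneq\lang$, closedness already forces satisfiability).

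The delicate point you flag in (3) is genuine, and your proposed patch does not close it. From $m\not\models\lang_{0}$ for a finite unsatisfiable $\lang_{0}$ you want to extract some $\alpha\in\lang_{0}$ with $m\not\models\{\alpha\}$; but that inference is exactly one half of normality, and parts (1)--(4) are asserted for arbitrary $\amst$s. In fact the converse in (3) is false without normality: take $\lang=\{a,b\}$, $\mathbf{M}=\{m\}$, and declare $m\models\Gamma$ iff $\Gamma\neq\{a,b\}$. Then $\lang$ is not finitely satisfiable (its finite subset $\{a,b\}$ is unsatisfiable), yet $\Th(\{m\})=\lang$ with $\{m\}\neq\emptyset$. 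You should know that the paper's own proof makes precisely the leap you were suspicious of --- it declares a contradiction directly between ``$m\models\{\alpha\}$ for all $\alpha\in\lang$'' and ``$\lang$ is not finitely satisfiable'' --- so the defect lies in the statement as much as in your sketch: the converse of (3) needs normality, or the weaker hypothesis that some singleton $\{\alpha\}$ is unsatisfiable, as an additional assumption. Since the only clauses invoked later in the paper are (2), (5) and the unconditional direction of (3), nothing downstream is affected.
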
}
\begin{proof}
    \begin{enumerate}[label=(\arabic*)]
        \item Let $X\subseteq\mathbf{M}$.

        If $X=\emptyset$, then $\displaystyle\bigcap_{m\in X}\Th(\{m\})=\lang$. Suppose $\Th(\emptyset)\neq\lang$. Then, there exists $\alpha\in\lang$ such that $m\not\models\{\alpha\}$ for some $m\in\emptyset$. This is impossible. Thus, $\Th(\emptyset)=\lang$ and hence, in this case, $\Th(X)=\displaystyle\bigcap_{m\in X}\Th(\{m\})$.

        Now, suppose $X\neq\emptyset$. Then, for any $\alpha\in\lang$,
        \[
        \begin{array}{llll}
             \alpha\in\Th(X)&\hbox{iff}&m\models\{\alpha\}&\hbox{for all }m\in X\\
             &\hbox{iff}&\alpha\in\Th(\{m\})&\hbox{for all }m\in X\\
             &\hbox{iff}&\alpha\in\displaystyle\bigcap_{m\in X}\Th(\{m\})
        \end{array}
        \]
        Thus, $\Th(X)=\displaystyle\bigcap_{m\in X}\Th(\{m\})$ for all $X\subseteq\mathbf{M}$.

        \item Let $X\subseteq Y\subseteq\mathbf{M}$. Then,
        \[
        \begin{array}{lcll}
            \Th(Y)&=&\displaystyle\bigcap_{m\in Y}\Th(\{m\}),&\hbox{by (1)}\\
            &\subseteq&\displaystyle\bigcap_{m\in X}\Th(\{m\}),&\hbox{since }X\subseteq Y\\
            &=&\Th(X),&\hbox{again by (1)}.
        \end{array}
        \]

        \item We have shown above, in the proof of (1), that if $X=\emptyset$, then $\Th(X)=\lang$. 
        
        Now, for the converse, suppose $\lang$ is not finitely satisfiable and $\Th(X)=\lang$. If possible, let $m\in X$. Then, as $\{m\}\subseteq X$, by (2), we have $\Th(X)\subseteq \Th(\{m\})$. This implies that $\Th(\{m\})=\lang$. So, $m\models \{\alpha\}$ for all $\alpha\in \lang$. This, however, contradicts our assumption that $\lang$ is not finitely satisfiable. Hence, $X=\emptyset$.

        \item It follows from (2), that $\Th\left(\displaystyle\bigcup_{i\in I}X_i\right)\subseteq\displaystyle \bigcap_{i\in I} \Th(X_i)$, since $X_i\subseteq\displaystyle \bigcup_{i\in I}X_i$. So, $\Th\left(\displaystyle\bigcup_{i\in I}X_i\right)\subseteq \Th(X_i)$, for each $i\in I$.

        For the reverse inclusion, let $\alpha\in \lang$ such that $\alpha\notin \Th\left(\displaystyle\bigcup_{i\in I}X_i\right)$. Then, there exists $m\in\displaystyle\bigcup_{i\in I}X_i$ such that $m\not\models\{\alpha\}$. Since $m\in\displaystyle\bigcup_{i\in I}X_i$, $m\in X_i$ for some $i\in I$. So, $\alpha\notin\Th(X_i)$ and hence, $\alpha\notin\displaystyle\bigcap_{i\in I}\Th(X_i)$.
        
        Thus, $\displaystyle \bigcap_{i\in I} \Th(X_i)\subseteq  \Th\left(\displaystyle\bigcup_{i\in I}X_i\right)$ as well. Hence, $\Th\left(\displaystyle\bigcup_{i\in I}X_i\right)=\displaystyle \bigcap_{i\in I} \Th(X_i)$.
        
        Again, by (2), $\displaystyle\bigcup_{i\in I}\Th(X_i)\subseteq \Th\left(\bigcap_{i\in I}X_i\right)$, since $\displaystyle\bigcap_{i\in I}X_i\subseteq X_i$, and hence, $\Th(X_i)\subseteq\Th\left(\displaystyle\bigcap_{i\in I}X_i\right)$, for each $i\in I$.

    \item Let $\Sigma\subseteq \lang$ and $\alpha\in\lang$ such that $\alpha\notin\Th(\Mod(\Sigma))$. Then, $m\not\models \{\alpha\}$ for some $m\in\Mod(\Sigma)$. Thus, $m\models\Sigma$ but $m\not\models\{\alpha\}$. Since $\mstr$ is normal, this implies that $\alpha\notin\Sigma$. 
    
    Thus, $\Sigma\subseteq \Th(\Mod(\Sigma))$ for all $\Sigma\subseteq \lang$.

    Now, let $\Sigma\subsetneq \lang$ such that $\Th(\Mod(\Sigma))=\Sigma$ and $\alpha\in\lang\setminus\Sigma$. Then, $\alpha\notin \Th(\Mod(\Sigma))$. Hence, there exists $m\in \Mod(\Sigma)$ such that $m\not\models \{\alpha\}$. Since $m\in\Mod(\Sigma)$, $\Sigma$ is satisfiable. 
    
    Since $\mstr$ is normal, $(\lang,\deduc_\mstr)$ is of Tarski-type, by \hyperlink{thm:rep_Tarski}{\thmref{thm:rep_Tarski}}. Thus, $\Sigma\deduc_\mstr\beta$ for all $\beta\in\Sigma$. Now, let $\beta\in \lang$ such that $\Sigma\deduc_{\mstr}\beta$. So, $\Mod(\Sigma)\subseteq \Mod(\{\beta\})$. Then, by (2), $\Th(\Mod(\{\beta\})\subseteq \Th(\Mod(\Sigma))$. Now, by the above arguments, $\{\beta\}\subseteq\Th(\Mod(\{\beta\})$. So, $\{\beta\}\subseteq\Th(\Mod(\Sigma))=\Sigma$. Thus, $\beta\in\Sigma$. Hence, $\Sigma\deduc_\mstr\beta$ iff $\beta\in\Sigma$, i.e., $\Sigma$ is $\deduc_\mstr$-closed.
    
    For the converse, let $\Sigma\subsetneq \lang$ such that $\Sigma$ is satisfiable and $\deduc_{\mstr}$-closed. Suppose $\Sigma\subsetneq \Th(\Mod(\Sigma))$. Then, there exists $\alpha\in \Th(\Mod(\Sigma))$ such that $\alpha\notin \Sigma$. Now, since $\Sigma$ is satisfiable, $\Mod(\Sigma)\neq\emptyset$. Let $m\in \Mod(\Sigma)$. Then, as $\alpha\in \Th(\Mod(\Sigma))$, $m\models \{\alpha\}$, i.e., $m\in \Mod(\{\alpha\})$. Thus, $\Mod(\Sigma)\subseteq\Mod(\{\alpha\})$ and so, $\Sigma\deduc_{\mstr}\alpha$. Since $\Sigma$ is $\deduc_{\mstr}$-closed, this implies that $\alpha\in \Sigma$. This is a contradiction. Thus, $\Th(\Mod(\Sigma))\subseteq\Sigma$. Hence, $\Th(\Mod(\Sigma))=\Sigma$.
    \end{enumerate}
\end{proof}

We now recall some concepts from the theory of partial orders essential for the following theorem.   

A \emph{partially ordered set} or \emph{poset} is a pair $(P,\le)$, where $P$ is a set and $\le\,\subseteq P\times P$ such that for all $x,y\in P$, the following properties are satisfied.
\begin{enumerate}[label=(\alph*)]
    \item $(x,x)\in\,\le$.
    \item If $(x,y)\in\,\le$ and $(y,x)\in\,\le$, then $x=y$.
    \item If $(x,y)\in\,\le$ and $(y,z)\in\,\le$, then $(x,z)\in\,\le$.
\end{enumerate}
For any $x,y\in P$, we usually write $x\le y$ instead of $(x,y)\in\,\le$. 

A poset $(P,\le)$ is said to be \emph{$\le$-directed} if for all $x,y\in P$, there exists $z\in P$ such that $x\le z$ and $y\le z$. 

Suppose $A$ is any set and $X\subseteq\pow(A)$. Then, $(X,\subseteq)$ and $(X,\supseteq)$ are posets.

Suppose $(P,\le_p)$ and $(Q,\le_q)$ are posets. A function $f:P\to Q$ is said to be \emph{order-preserving} (respectively, \emph{order-reversing}) if, for all $x,y\in P$, $x\le_p y$ implies that $f(x)\le_q f(y)$ (respectively, $f(y)\le_q f(x)$). 

Given a set $A$, we denote the set of all finite subsets of $A$ by $\Finset(A)$.

\hypertarget{thm:compact_normal_ams(I)}{\begin{thm}[\textsc{Compactness for Normal $\mathsf{amst}$ (I)}]{\label{thm:compact_normal_ams(I)}}
Suppose $\mstr=(\mathbf{M},\models,\mathcal{P}(\lang))$ is a normal $\mathsf{amst}$ such that $\lang$ is not finitely satisfiable. Let $(\lang,\deduc_{\mstr})$ be the logical structure induced by $\mstr$. Then, the following statements are equivalent.
\begin{enumerate}[label=(\arabic*)]
    \item $\mstr$ is compact. 
    \item Every finitely satisfiable set is contained in a $\deduc_{\mstr}$-nontrivial maximal finitely satisfiable set. 
    \item Every finitely satisfiable set is contained in a maximal satisfiable set. 
    \item Every finitely satisfiable set is contained in a complete set. 
    \hypertarget{thm:compact_normal_ams(I)(5)}{\item Every $\deduc_{\mstr}$-trivial set has a finite $\deduc_{\mstr}$-trivial subset.\label{thm:compact_normal_ams(I)(5)}}   
    \hypertarget{thm:compact_normal_ams(I)(7)}{\item Suppose $f:I\to \mathcal{P}(\lang)$ such that $I\ne\emptyset$ and the poset $(f(I),\subseteq)$ is $\subseteq$-directed. If $f(i)$ is satisfiable for all $i\in I$, then so is $\displaystyle\bigcup_{i\in I}f(i)$.\label{thm:compact_normal_ams(I)(7)}}  
    \item  Suppose $\Sigma\subseteq \lang$ and $f:\Finset(\Sigma)\to \mathcal{P}(\lang)$ is an order-preserving map between the posets $(\Finset(\Sigma),\subseteq)$ and $(\pow(\lang,\subseteq)$. If $f(\Gamma)$ is satisfiable for all $\Gamma\in \Finset(\Sigma)$, then so is $\displaystyle\bigcup_{\Gamma\in \Finset(\Sigma)} f(\Gamma)$.
    \hypertarget{thm:compact_normal_ams(I)(9)}{\item Suppose $f:I\to \mathcal{P}(\mathbf{M})$ such that $I\ne\emptyset$ and the poset $(f(I),\supseteq)$ is $\supseteq$-directed. If $f(i)\ne\emptyset$ for all $i\in I$, then $\displaystyle\bigcup_{i\in I}\Th(f(i))$ is satisfiable.\label{thm:compact_normal_ams(I)(9)}}
    \item Suppose $\Sigma\subseteq \lang$ is finitely satisfiable and $f:\Finset(\Sigma)\to \mathcal{P}(\mathbf{M})$ is an order-reversing map between the posets $(\Finset(\Sigma),\subseteq)$ and $(\pow(\mathbf{M}),\subseteq)$. If $f(\Gamma)\ne\emptyset$ for all $\Gamma\in \Finset(\Sigma)$, then $\displaystyle\bigcup_{\Gamma\in \Finset(\Sigma)} \Th(f(\Gamma))$ is satisfiable.    
\end{enumerate}
\end{thm}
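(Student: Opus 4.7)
I plan to establish the equivalence of all nine statements by a network of implications anchored on (1): a cycle $(1) \Rightarrow (2) \Rightarrow (3) \Rightarrow (4) \Rightarrow (1)$ covering the maximal-extension characterizations, the direct equivalence $(1) \Leftrightarrow (5)$, and two mini-cycles $(1) \Rightarrow (6) \Rightarrow (7) \Rightarrow (1)$ and $(1) \Rightarrow (8) \Rightarrow (9) \Rightarrow (1)$ for the directed-union characterizations. The hypothesis that $\lang$ is not finitely satisfiable will be used throughout to invoke \hyperlink{thm:nsat=>triv}{Theorem~\ref{thm:nsat=>triv}} and \hyperlink{cor:nontriv=>sat}{Corollary~\ref{cor:nontriv=>sat}}, freely toggling between (un)satisfiability and $\deduc_\mstr$-(non)triviality, and also to guarantee proper containments such as $\Gamma^* \subsetneq \lang$ whenever $\Gamma^*$ is finitely satisfiable.

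\textbf{The maximal-extension group and (5).} For $(1) \Rightarrow (2)$, I would take a finitely satisfiable $\Gamma$, invoke the Extension Lemma for a maximal finitely satisfiable $\Gamma^* \supseteq \Gamma$, use compactness to get satisfiability, and invoke \hyperlink{cor:nontriv=>sat}{Corollary~\ref{cor:nontriv=>sat}} to re-express this as $\deduc_\mstr$-nontriviality. For $(2) \Rightarrow (3)$, the same corollary returns satisfiability, and maximality among satisfiable sets is automatic since any proper superset already fails finite satisfiability. For $(3) \Rightarrow (4)$, I would verify completeness of a maximal satisfiable $\Gamma^*$ by cases on whether $\alpha \in \Gamma^*$: when $\alpha \notin \Gamma^*$, maximality forces $\Mod(\Gamma^* \cup \{\alpha\}) = \emptyset$, which by \hyperlink{thm:cumulative}{Theorem~\ref{thm:cumulative}}(1) becomes $\Mod(\Gamma^*) \cap \Mod(\{\alpha\}) = \emptyset$, placing $\Mod(\Gamma^*) \subseteq \mathbf{M} \setminus \Mod(\{\alpha\})$; the case $\alpha \in \Gamma^*$ is handled by normality. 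Then $(4) \Rightarrow (1)$ is immediate from \hyperlink{rem:comp=>sat}{Remark~\ref{rem:comp=>sat}}. For $(1) \Leftrightarrow (5)$, I would pass to the contrapositive of compactness and apply \hyperlink{thm:nsat=>triv}{Theorem~\ref{thm:nsat=>triv}}: unsatisfiability coincides with $\deduc_\mstr$-triviality (since the hypothesis makes $\lang$ itself unsatisfiable), so (1) becomes exactly the statement that every trivial set has a finite trivial subset, the other direction being automatic from monotonicity of $\deduc_\mstr$.

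\textbf{The directed-union groups and main obstacle.} For $(1) \Rightarrow (6)$, I would show by a short induction using $\subseteq$-directedness that any finite subset of $\bigcup_i f(i)$ lies inside a single $f(j)$; satisfiability of $f(j)$ together with normality and \hyperlink{thm:cumulative}{Theorem~\ref{thm:cumulative}}(2) makes this subset satisfiable, so the union is finitely satisfiable and (1) closes the argument. $(6) \Rightarrow (7)$ specializes to $I = \Finset(\Sigma)$ with $\subseteq$-directedness witnessed by $\Gamma_1 \cup \Gamma_2 \in \Finset(\Sigma)$ via order-preservation, and $(7) \Rightarrow (1)$ takes $f(\Gamma) = \Gamma$. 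The dual group mirrors this: $(1) \Rightarrow (8)$ uses $\supseteq$-directedness and the order-reversing behaviour of $\Th$ (\hyperlink{thm:cumulative_th}{Theorem~\ref{thm:cumulative_th}}(2)) to trap any finite subset of $\bigcup_i \Th(f(i))$ inside a single $\Th(f(j))$, and any $m \in f(j)$ then witnesses its satisfiability via normality; $(8) \Rightarrow (9)$ is the same $\Gamma_1 \cup \Gamma_2$ trick with order-reversal; and $(9) \Rightarrow (1)$ takes $f(\Gamma) = \Mod(\Gamma)$, which is order-reversing by \hyperlink{thm:cumulative}{Theorem~\ref{thm:cumulative}}(2) and nonempty because $\Sigma$ is finitely satisfiable, with the resulting satisfier of $\bigcup_{\Gamma} \Th(\Mod(\Gamma))$ satisfying each $\alpha \in \Sigma$ via the inclusion $\{\alpha\} \subseteq \Th(\Mod(\{\alpha\}))$ from \hyperlink{thm:cumulative_th}{Theorem~\ref{thm:cumulative_th}}(5). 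I expect the hardest part to be bookkeeping rather than conceptual depth: one must deploy the hypothesis ``$\lang$ not finitely satisfiable'' precisely where needed (to apply \hyperlink{thm:nsat=>triv}{Theorem~\ref{thm:nsat=>triv}} and \hyperlink{cor:nontriv=>sat}{Corollary~\ref{cor:nontriv=>sat}}) and keep careful track of which implications genuinely require compactness versus those that follow from normality and monotonicity alone.
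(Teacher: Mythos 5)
Your proposal is correct and follows essentially the same route as the paper: the same cycle through the maximal-extension conditions $(1)$--$(4)$, the same translation of $(5)$ into (un)satisfiability via \hyperlink{thm:nsat=>triv}{Theorem~\ref{thm:nsat=>triv}}, and the same directed-union arguments closed by the inclusion map and by $\Gamma\mapsto\Mod(\Gamma)$. The only differences are minor reorganizations — you prove $(2)\Rightarrow(3)$ and $(1)\Rightarrow(8)$ directly where the paper routes through $(1)$ and $(6)$ respectively, and your $(5)\Rightarrow(1)$ is in fact a little cleaner than the paper's detour through the Extension Lemma and \hyperlink{cor:nontriv=>sat}{Corollary~\ref{cor:nontriv=>sat}} — none of which changes the substance.
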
}

\begin{proof}
\underline{(1) $\implies$ (2)}: Let $\Gamma\subseteq\lang$ be a finitely satisfiable set. Then, by \hyperlink{thm:finsat=>maxfinsat}{\thmref{thm:finsat=>maxfinsat}}, $\Gamma$ is contained in a maximal finitely satisfiable set, say $\Sigma$. Since $\lang$ is not finitely satisfiable, $\Sigma\subsetneq \lang$. Now, as $\Sigma$ is finitely satisfiable, by (1), $\Sigma$ is satisfiable. So, by \hyperlink{cor:nontriv=>sat}{\corref{cor:nontriv=>sat}}, $\Sigma$ is $\deduc_{\mstr}$ nontrivial. Thus, $\Sigma$ is a $\deduc_{\mstr}$-nontrivial maximal finitely satisfiable set containing $\Gamma$.

\underline{(2) $\implies$ (1)}: Let $\Gamma\subseteq \lang$ be finitely satisfiable. Then, by (2), $\Gamma$ is contained in a $\deduc_{\mstr}$-nontrivial maximal finitely satisfiable set $\Sigma$. Since $\mstr$ is normal, by \hyperlink{thm:rep_Tarski}{\thmref{thm:rep_Tarski}}, $(\lang,\deduc_{\mstr})$ is of Tarski-type. So, by reflexivity of $\deduc_\mstr$, as $\Sigma$ is nontrivial, $\Sigma\subsetneq\lang$. Then, by \hyperlink{cor:nontriv=>sat}{\corref{cor:nontriv=>sat}}, $\Sigma$ is satisfiable. Since $\Gamma\subseteq \Sigma$, by normality of $\mstr$, $\Gamma$ is satisfiable as well.  Thus, $\mstr$ is compact.

\underline{(1) $\implies$ (3)}: Let $\Gamma\subseteq\lang$ be finitely satisfiable. Then, by \hyperlink{thm:finsat=>maxfinsat}{\thmref{thm:finsat=>maxfinsat}}, there exists a maximal finitely satisfiable $\Sigma$ such that $\Gamma\subseteq\Sigma$. Now, as $\Sigma$ is finitely satisfiable, by (1), $\Sigma$ is satisfiable. If possible, let $\Delta\supsetneq \Sigma$ be a satisfiable set. Then, again by (1), $\Delta$ is finitely satisfiable as well. This, however, contradicts the fact that $\Sigma$ is maximal finitely satisfiable. Hence, $\Sigma$ is a maximal satisfiable set containing $\Gamma$.

\underline{(3) $\implies$ (4)}: Let $\Gamma\subseteq\lang$ be finitely satisfiable. Then, by (3), there exists a maximal satisfiable set $\Sigma$ such that $\Gamma\subseteq\Sigma$. We claim that $\Sigma$ is complete. 

Since $\Sigma$ is satisfiable, $\Mod(\Sigma)\neq\emptyset$. Let $\alpha\in \lang$. If $\alpha\in \Sigma$, then by \hyperlink{thm:cumulative(1)}{\thmref{thm:cumulative}(1)}, $\Mod(\Sigma)\subseteq\displaystyle\bigcap_{\beta\in\Sigma}\Mod(\{\beta\})\subseteq\Mod(\{\alpha\})$. On the other hand, if $\alpha\notin \Sigma$, then as $\Sigma$ is maximal satisfiable, $\Sigma\cup\{\alpha\}$ is not satisfiable, i.e., $\Mod(\Sigma\cup\{\alpha\})=\emptyset$. Now, by \hyperlink{thm:cumulative(3)}{\thmref{thm:cumulative}(3)}, $\Mod(\Sigma\cup\{\alpha\})=\Mod(\Sigma)\cap\Mod(\{\alpha\})$. So, $\Mod(\Sigma)\cap\Mod(\{\alpha\})=\emptyset$ as well. This implies that $\Mod(\Sigma)\subseteq \mathbf{M}\setminus \Mod(\{\alpha\})$. Thus, for all $\alpha\in \lang$, either $\Mod(\Sigma)\subseteq \Mod(\{\alpha\})$ or $\Mod(\Sigma)\subseteq \mathbf{M}\setminus \Mod(\{\alpha\})$. Hence, $\Sigma$ is a complete set containing $\Gamma$.

\underline{(4) $\implies$ (1)}: Let $\Gamma\subseteq\lang$ be finitely satisfiable. Then, by (4), there exists a complete set $\Sigma$ such that $\Gamma\subseteq\Sigma$. Now, by \hyperlink{rem:comp=>sat}{\remref{rem:comp=>sat}}, $\Sigma$ is satisfiable, i.e., $\Mod(\Sigma)\neq\emptyset$. Since $\Gamma\subseteq\Sigma$, by \hyperlink{thm:cumulative(2)}{\thmref{thm:cumulative}(2)}, $\Mod(\Sigma)\subseteq\Mod(\Gamma)$. So, $\Mod(\Gamma)\neq\emptyset$, i.e., $\Gamma$ is satisfiable. Thus, every finitely satisfiable set is satisfiable. Hence, $\mstr$ is compact.

\underline{(1) $\implies$ (5)}: Let $\Gamma\subseteq\lang$ be $\deduc_{\mstr}$-trivial. Since $\lang$ is not finitely satisfiable, by (1), it is not satisfiable. Then, as $\mstr$ is normal, by \hyperlink{thm:nsat=>triv}{\thmref{thm:nsat=>triv}}, $\Gamma$ is not satisfiable. So, by (1) again, $\Gamma$ is not finitely satisfiable. Thus, there exists a finite $\Gamma_0\subseteq \Gamma$ that is not satisfiable. Again, by \hyperlink{thm:nsat=>triv}{\thmref{thm:nsat=>triv}}, this implies that $\Gamma_0$ is $\deduc_{\mstr}$-trivial. Thus, every $\deduc_\mstr$-trivial set contains a finite $\deduc_\mstr$-trivial set.

\underline{(5) $\implies$ (1)}: Suppose $\mstr$ is not compact. Then, there exists $\Gamma\subseteq \lang$ that is not satisfiable but every finite subset of $\Gamma$ is satisfiable. Since $\Gamma$ is not satisfiable, by \hyperlink{thm:nsat=>triv}{\thmref{thm:nsat=>triv}}, $\Gamma$ is $\deduc_{\mstr}$-trivial. So, by (5), there exists a finite $\Gamma_0\subseteq \lang$ such that $\Gamma_0$ is $\deduc_{\mstr}$-trivial. We claim that $\Gamma_0$ is not satisfiable. 

Suppose the contrary, i.e., $\Gamma_0$ is satisfiable. So, by normality of $\mstr$, $\Gamma_0$ is finitely satisfiable and hence, by \hyperlink{thm:finsat=>maxfinsat}{\thmref{thm:finsat=>maxfinsat}}, is contained in a maximal finitely satisfiable set, say $\Sigma_0$. Then, $\Sigma_0\subsetneq\lang$ as $\lang$ is not finitely satisfiable. Now, by \hyperlink{thm:rep_Tarski}{\thmref{thm:rep_Tarski}}, $(\lang,\deduc_\mstr)$ is of Tarski-type. So, by monotonicity of $\deduc_\mstr$, as $\Gamma_0\subseteq\Sigma_0$ and $\Gamma_0$ is $\deduc_\mstr$-trivial, $\Sigma_0$ is also $\deduc_\mstr$-trivial. Thus, by \hyperlink{cor:nontriv=>sat}{\corref{cor:nontriv=>sat}}, $\Sigma_0$ is not satisfiable. Since $\mstr$ is normal, this implies that $\Sigma_0$ is not finitely satisfiable, which is a contradiction. Thus, $\Gamma_0$ cannot be satisfiable. 

So, we can conclude that there cannot be any set $\Gamma\subseteq\lang$ that is not satisfiable but finitely satisfiable. Hence, $\mstr$ is compact. 

\underline{(1) $\implies$ (6)}: Let $f:I\to \mathcal{P}(\lang)$ such that $I\ne\emptyset$ and the poset $(f(I),\subseteq)$ is $\subseteq$-directed. Suppose $f(i)$ is satisfiable for all $i\in I$ but $\displaystyle\bigcup_{i\in I} f(i)$ is not. Then, by (1), there exists a finite $\Lambda\subseteq\displaystyle\bigcup_{i\in I} f(i)$ that is not satisfiable. Now, by \hyperlink{thm:cumulative(1)}{\thmref{thm:cumulative}(1)}, $\Mod(\emptyset)=\mathbf{M}$ and so, $\emptyset$ is satisfiable. Thus, $\Lambda\ne\emptyset$. Let $\Lambda=\{\lambda_1,\ldots,\lambda_n\}$ and $\lambda_t\in f(i_t)$ for each $1\le t\le n$. So, $\Lambda\subseteq  \displaystyle\bigcup_{t=1}^n f(i_t)$. Now, as $(f(I),\subseteq)$ is $\subseteq$-directed, $\displaystyle\bigcup_{t=1}^n f(i_t)\subseteq f(k)$ for some $k\in I$. This implies that $\Lambda\subseteq f(k)$. Now, as $\mstr$ is normal and $\Lambda$ is not satisfiable, $f(k)$ is not satisfiable as well. However, by our hypothesis, $f(k)$ must be satisfiable as $k\in I$. This is a contradiction. Hence, $\displaystyle\bigcup_{i\in I}f(i)$ is satisfiable.

\underline{(6) $\implies$ (7)}: Let $\Sigma\subseteq\lang$ be finitely satisfiable and $f:\Finset(\Sigma)\to\pow(\lang)$ an order-preserving map between the posets $(\Finset(\Sigma),\subseteq)$ and $(\pow(\lang),\subseteq)$. Clearly, $(\Finset(\Sigma),\subseteq)$ is $\subseteq$-directed. Now, as $f$ is order-preserving, the poset $(f(\Finset(\Sigma)),\subseteq)$ is also $\subseteq$-directed. Then, by (6), if $f(\Gamma)$ is satisfiable for each $\Gamma\in\Finset(\Sigma)$, then so is $\displaystyle\bigcup_{\Gamma\in \Finset(\Sigma)} f(\Gamma)$.

\underline{(7) $\implies$ (1)}: Let $\Sigma\subseteq \lang$ be finitely satisfiable and $\iota_\Sigma:\Finset(\Sigma)\to \mathcal{P}(\lang)$ be the inclusion map, i.e., $\iota_\Sigma(\Gamma)=\Gamma$ for all $\Gamma\in \Finset(\Sigma)$. Clearly, $\iota_\Sigma$ is an order-preserving map between the posets $(\Finset(\Sigma),\subseteq)$ and $(\pow(\lang),\subseteq)$. Since $\Sigma$ is finitely satisfiable, $\iota_\Sigma(\Gamma)=\Gamma$ is satisfiable for all $\Gamma\in \Finset(\Sigma)$. So, by (7), $\displaystyle\bigcup_{\Gamma\in \Finset(\Sigma)}\iota_{\Sigma}(\Gamma)= \displaystyle\bigcup_{\Gamma\in \Finset(\Sigma)}\Gamma=\Sigma$ is also satisfiable. Thus, every finitely satisfiable set is satisfiable. Hence, $\mstr$ is compact.

\underline{(6) $\implies$ (8)}: Let $f:I\to \mathcal{P}(\mathbf{M})$ such that $I\ne\emptyset$ and the poset $(f(I),\supseteq)$ is $\supseteq$-directed. Suppose $f(i)\ne\emptyset$ for all $i\in I$. We recall that $\Th:\pow(\mathbf{M})\to\pow(\lang)$ and so, $\Th\circ f:I\to \mathcal{P}(\lang)$. Now, let $\Th(f(i)),\Th(f(j))\in\Th(f(I))$, where $i,j\in I$. Then $f(i),f(j)\in f(I)$. Since $(f(I),\supseteq)$ is $\supseteq$-directed, there exists $f(k)\in f(I)$, for some $k\in I$, such that $f(k)\supseteq f(i)$ and $f(k)\supseteq f(j)$. Then, \hyperlink{thm:cumulative_th(2)}{\thmref{thm:cumulative_th}(2)}, $\Th(f(i))\subseteq\Th(f(k))$ and $\Th(f(j))\subseteq\Th(f(k))$. This implies that the poset $((\Th\circ f)(I),\subseteq)$ is $\subseteq$-directed.

We now show that $(\Th\circ f)(i)$ is satisfiable for all $i\in I$. By our hypothesis, $f(i)\neq\emptyset$ for all $i\in I$. Let $m_i\in f(i)$ for each $i\in I$. So, by \hyperlink{thm:cumulative_th(2)}{\thmref{thm:cumulative_th}(2)}, $\Th(f(i))\subseteq \Th(\{m_i\})$. Then, as $m_i\models \Th(\{m_i\})$ for all $i\in I$, by normality of $\mstr$, $m_i\models\Th(f(i))$ for all $i\in I$. Thus, $\Th(f(i))$ is satisfiable for all $i\in I$. Hence, by (6), $\displaystyle\bigcup_{i\in I}\Th(f(i))$ is satisfiable. 

\underline{(8) $\implies$ (9)}: Let $\Sigma\subseteq\lang$ be finitely satisfiable and $f:\Finset(\Sigma)\to\pow(\mathbf{M})$ an order-reversing map between the posets $(\Finset(\Sigma),\subseteq)$ and $(\pow(\mathbf{M}),\subseteq)$. Clearly, $(\Finset(\Sigma),\subseteq)$ is $\subseteq$-directed. Now, as $f$ is order-reversing, the poset $(f(\Finset(\Sigma)),\supseteq)$ is $\supseteq$-directed. Then, by (8), if $f(\Gamma)\neq\emptyset$ for each $\Gamma\in\Finset(\Sigma)$, then $\displaystyle\bigcup_{\Gamma\in \Finset(\Sigma)} \Th(f(\Gamma))$ is satisfiable.

\underline{(9) $\implies$ (1)}: Let $\Sigma\subseteq\lang$ be finitely satisfiable and $\mu_\Sigma:\Finset(\Sigma)\to \mathcal{P}(\mathbf{M})$ be defined as follows. For each $\Sigma_0\in\Finset(\Sigma)$, $\mu_{\Sigma}(\Sigma_0)=\Mod(\Sigma_0)$. Then, by \hyperlink{thm:cumulative(2)}{\thmref{thm:cumulative}(2)}, $\mu_\Sigma$ is an order-reversing map between the posets $(\Finset(\Sigma),\subseteq)$ and $(\pow(\mathbf{M},\subseteq)$. Now, as $\Sigma$ is finitely satisfiable, $\mu_{\Sigma}(\Sigma_0)\ne\emptyset$ for all $\Sigma_0\in \Finset(\Sigma)$. So, by (9), $\displaystyle\bigcup_{\Sigma_0\in \Finset(\Sigma)}\Th(\mu_\Sigma(\Sigma_0))$ is satisfiable. Since $\mstr$ is normal, by \hyperlink{thm:cumulative_th(5)}{\thmref{thm:cumulative_th}(5)}, $\Sigma_0\subseteq\Th(\Mod(\Sigma_0))$ for all $\Sigma_0\in \Finset(\Sigma)$. Thus, $\Sigma=\displaystyle \bigcup_{\Sigma_0\in \Finset(\Sigma)}\Sigma_0\subseteq \displaystyle \bigcup_{\Sigma_0\in \Finset(\Sigma)}\Th(\mu_\Sigma(\Sigma_0))$. Again, by normality of $\mstr$, this implies that $\Sigma$ is satisfiable. Hence, $\mstr$ is compact.
\end{proof}

The normality condition clearly plays a crucial role in establishing the equivalence between the statements in the above theorem. The following example further illustrates this point by providing a scenario where this equivalence breaks in the absence of normality.

\begin{ex}
We consider the $\amst$ $\mstr=(\mathbb{N},\models,\mathcal{P}(\mathbb{N}))$, where $\mathbb{N}$ is the set of natural numbers and $\models\,\subseteq\mathbb{N}\times\pow(\mathbb{N})$ is defined as follows. For all $\Gamma\cup\{n\}\subseteq \mathbb{N}$, 
\[
n\models \Gamma \quad\hbox{ iff }\quad \Gamma\ne\{0\},\,\{2t+1:t\in \mathbb{N}\},\,\mathbb{N}.
\]
Since $\{0\}$ is not satisfiable, $\mathbb{N}$ is not finitely satisfiable. $\mstr$ is, however, not normal since, for any $n\in\mathbb{N}$, $n\models\{0,1\}$ but $n\not\models\{0\}$.

Now, let $\Gamma\subseteq \mathbb{N}$ be finitely satisfiable. Since $\mathbb{N}$ is not finitely satisfiable, $\Gamma\subsetneq \mathbb{N}$. Let $k\in \mathbb{N}\setminus \Gamma$. Then, $\mathbb{N}\setminus \{k\}$ is satisfiable and since $\mathbb{N}$ is not satisfiable, is, in fact, maximal satisfiable. Thus, every finitely satisfiable set is contained in a maximal satisfiable set, i.e., statement (3) of \hyperlink{thm:compact_normal_ams(I)}{\thmref{thm:compact_normal_ams(I)}} holds for $\mstr$. However, $\mstr$ is not compact since while $\{2t+1:t\in \mathbb{N}\}$ is finitely satisfiable, it is not satisfiable. Thus, statement (1) of \hyperlink{thm:compact_normal_ams(I)}{\thmref{thm:compact_normal_ams(I)}} fails for $\mstr$.
\end{ex}

Suppose $(\lang,\deduc)$ is a Tarski-type logical structure. Then, by \hyperlink{thm:rep_Tarski}{\thmref{thm:rep_Tarski}}, there exists a normal $\amst$ $\mstr$ such that $\lang$ is not satisfiable in $\mstr$ and $\deduc\,=\,\deduc_\mstr$. We now raise the following question. If, moreover, $\mstr$ is compact, and $\deduc\,=\,\deduc_{\mathfrak{N}}$ for some normal $\amst$ $\mathfrak{N}$, then is $\mathfrak{N}$ also compact? The following result shows that this is indeed the case.

\begin{cor}
    Suppose $\mstr_1=(\mathbf{M}_1,\models_1,\mathcal{P}(\lang)),\,\mstr_2=(\mathbf{M}_2,\models_2,\mathcal{P}(\lang))$ are two normal $\amst$s such that $\lang$ is not satisfiable in either of them, and $(\lang,\deduc_{\mstr_1}),\,(\lang,\deduc_{\mstr_2})$ the logical structures induced by them, respectively. If $\mstr_1$ is compact and $\deduc_{\mstr_1}\,=\,\deduc_{\mstr_2}$, then $\mstr_2$ is compact as well. 
\end{cor}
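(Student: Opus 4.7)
The plan is to reduce compactness of $\mstr_2$ to compactness of $\mstr_1$ through the entailment-relation-level characterization given by condition \hyperlink{thm:compact_normal_ams(I)(5)}{(5)} of \hyperlink{thm:compact_normal_ams(I)}{\thmref{thm:compact_normal_ams(I)}}: \emph{every $\deduc_\mstr$-trivial set has a finite $\deduc_\mstr$-trivial subset.} This condition is stated purely in terms of $\deduc_\mstr$, so the hypothesis $\deduc_{\mstr_1}=\deduc_{\mstr_2}$ will transfer it from $\mstr_1$ to $\mstr_2$ without any further work.

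First I would apply \hyperlink{thm:compact_normal_ams(I)}{\thmref{thm:compact_normal_ams(I)}} to $\mstr_1$. Its standing hypothesis that $\lang$ is not finitely satisfiable in $\mstr_1$ is immediate: since $\mstr_1$ is compact and $\lang$ is not satisfiable in $\mstr_1$, $\lang$ cannot be finitely satisfiable in $\mstr_1$ either. Thus the compactness of $\mstr_1$ yields condition \hyperlink{thm:compact_normal_ams(I)(5)}{(5)} for $\mstr_1$, which, because $\deduc_{\mstr_1}=\deduc_{\mstr_2}$, is literally the same statement as condition \hyperlink{thm:compact_normal_ams(I)(5)}{(5)} for $\mstr_2$.

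To invoke the implication (5)$\Rightarrow$(1) of \hyperlink{thm:compact_normal_ams(I)}{\thmref{thm:compact_normal_ams(I)}} for $\mstr_2$, the one subtle point I need to verify is that $\lang$ is also not finitely satisfiable in $\mstr_2$ -- this is the only step that is not purely symbol-pushing. I would handle it by transferring a finite non-satisfiable subset from $\mstr_1$ to $\mstr_2$ through \hyperlink{thm:nsat=>triv}{\thmref{thm:nsat=>triv}}. Concretely, pick a finite $\Lambda\subseteq\lang$ that is not satisfiable in $\mstr_1$ (which exists by the previous paragraph); by the forward direction of \hyperlink{thm:nsat=>triv}{\thmref{thm:nsat=>triv}} it is $\deduc_{\mstr_1}$-trivial, hence $\deduc_{\mstr_2}$-trivial; and by the converse direction -- which applies because $\mstr_2$ is normal and $\lang$ is not satisfiable in $\mstr_2$ -- it is not satisfiable in $\mstr_2$. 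Therefore $\lang$ is not finitely satisfiable in $\mstr_2$.

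With that verified, \hyperlink{thm:compact_normal_ams(I)}{\thmref{thm:compact_normal_ams(I)}} applied to $\mstr_2$, combined with condition \hyperlink{thm:compact_normal_ams(I)(5)}{(5)} established above, delivers the compactness of $\mstr_2$. The only potential obstacle is the side-condition $\lang$ not finitely satisfiable in $\mstr_2$, and the argument above shows that it is a genuine consequence of the hypotheses rather than an additional assumption.
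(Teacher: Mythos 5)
Your proof is correct and follows essentially the same route as the paper's: both reduce the problem to condition \hyperlink{thm:compact_normal_ams(I)(5)}{(5)} of \hyperlink{thm:compact_normal_ams(I)}{\thmref{thm:compact_normal_ams(I)}}, the purely $\deduc_{\mstr}$-level characterization of compactness, which transfers across $\deduc_{\mstr_1}=\deduc_{\mstr_2}$, with \hyperlink{thm:nsat=>triv}{\thmref{thm:nsat=>triv}} mediating between non-satisfiability and $\deduc$-triviality. The only difference is organizational: the paper argues by contradiction and applies \hyperlink{thm:compact_normal_ams(I)}{\thmref{thm:compact_normal_ams(I)}} only to $\mstr_1$, so it never needs the side-condition that $\lang$ is not finitely satisfiable in $\mstr_2$, whereas you invoke the (5)$\Rightarrow$(1) direction for $\mstr_2$ as well and correctly supply that side-condition by transferring a finite unsatisfiable set from $\mstr_1$ to $\mstr_2$.
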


\begin{proof}
    Suppose $\mstr_2$ is not compact. Now, as $\mstr_2$ is normal, every satisfiable set is finitely satisfiable in $\mstr_2$. Thus, there exists $\Gamma\subseteq \lang$ such that $\Gamma$ is finitely satisfiable but not satisfiable in $\mstr_2$. Since $\Gamma$ is not satisfiable in $\mstr_2$, by \hyperlink{thm:nsat=>triv}{\thmref{thm:nsat=>triv}}, $\Gamma$ is $\deduc_{\mstr_2}$-trivial. Then, as $\deduc_{\mstr_1}\,=\,\deduc_{\mstr_2}$, $\Gamma$ is $\deduc_{\mstr_1}$-trivial as well. Since $\mstr_1$ is compact and $\lang$ is not satisfiable in $\mstr_1$, $\lang$ is not finitely satisfiable in $\mstr_1$ as well. Thus, $\mstr_1$ is a normal $\amst$ such that $\lang$ is not finitely satisfiable in it. Then, since $\mstr_1$ is compact, by \hyperlink{thm:compact_normal_ams(I)(5)}{\thmref{thm:compact_normal_ams(I)}(5)}, there exists a finite $\Gamma_0\subseteq \Gamma$ that is $\deduc_{\mstr_1}$-trivial. As $\deduc_{\mstr_1}\,=\,\deduc_{\mstr_2}$, $\Gamma_0$ is $\deduc_{\mstr_2}$-trivial as well. Now, as $\mstr_2$ is normal and $\lang$ is not satisfiable in $\mstr_2$, by \hyperlink{thm:nsat=>triv}{\thmref{thm:nsat=>triv}} again, $\Gamma_0$ is not satisfiable in $\mstr_2$. This, however, contradicts the finite satisfiability of $\Gamma$ in $\mstr_2$. Hence, $\mstr_2$ is compact.
\end{proof}

\begin{defn}
    A logical structure $(\lang,\deduc)$ is said to be \emph{finitary} if, for all $\Gamma\cup\{\alpha\}\subseteq\lang$, $\Gamma\deduc\alpha$ implies that there exists a finite $\Gamma_0\subseteq\Gamma$ such that $\Gamma_0\deduc\alpha$.  
\end{defn}

\hypertarget{thm:finitary}{\begin{thm}{\label{thm:finitary}}
    Suppose $(\lang,\deduc)$ is a finitary logical structure such that $\deduc$ satisfies monotonicity and transitivity. If there exists a nonempty finite $\deduc$-trivial set, then every $\deduc$-trivial set contains a finite $\deduc$-trivial set.
\end{thm}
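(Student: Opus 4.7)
The plan is to take a finite $\deduc$-trivial set $\Delta = \{\delta_1,\ldots,\delta_n\}$, which exists by hypothesis, and use it as a ``pivot'' to manufacture a finite $\deduc$-trivial subset inside any given $\deduc$-trivial set $\Gamma$. The intuition is that triviality of $\Delta$ means $\Delta$ derives every $\alpha \in \lang$, so if I can produce a finite subset of $\Gamma$ that derives each element of $\Delta$, then transitivity will propagate this to derive every $\alpha \in \lang$.

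Concretely, fix a $\deduc$-trivial $\Gamma \subseteq \lang$. Since $\Gamma$ is $\deduc$-trivial and each $\delta_i \in \lang$, we have $\Gamma \deduc \delta_i$ for each $i = 1,\ldots,n$. Applying the finitary assumption $n$ times, I obtain finite subsets $\Gamma_1,\ldots,\Gamma_n \subseteq \Gamma$ with $\Gamma_i \deduc \delta_i$. Setting $\Gamma_0 = \Gamma_1 \cup \cdots \cup \Gamma_n$ gives a finite subset of $\Gamma$, and monotonicity upgrades each $\Gamma_i \deduc \delta_i$ to $\Gamma_0 \deduc \delta_i$, so that $\Gamma_0 \deduc \delta$ for every $\delta \in \Delta$.

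To finish, I show $\Gamma_0$ is $\deduc$-trivial. Given any $\alpha \in \lang$, the $\deduc$-triviality of $\Delta$ yields $\Delta \deduc \alpha$. Combined with $\Gamma_0 \deduc \delta$ for all $\delta \in \Delta$, transitivity (in the form of \hyperlink{def:Tarski}{\defref{def:Tarski}}(c)) gives $\Gamma_0 \deduc \alpha$. Since $\alpha$ was arbitrary, $\Gamma_0$ is $\deduc$-trivial, completing the argument.

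I do not expect any serious obstacle here; the hypotheses line up almost mechanically, and the only subtlety is remembering that finitarity is applied pointwise to each $\delta_i$ and then monotonicity glues the finitely many witnesses into a single finite subset of $\Gamma$. Reflexivity is not needed, which is consistent with the statement's hypothesis list (only monotonicity, transitivity, and finitarity are assumed).
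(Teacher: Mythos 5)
Your proof is correct and is essentially identical to the paper's own argument: both use the finite $\deduc$-trivial set as a pivot, apply finitarity to each of its elements to extract finite witnesses inside $\Gamma$, take their union, and then invoke monotonicity followed by transitivity to conclude that the union is $\deduc$-trivial. Nothing to add.
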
}

\begin{proof}
    Suppose $\Lambda=\{\lambda_1,\ldots,\lambda_n\}\subseteq\lang$ is a nonempty $\deduc$-trivial set. Let $\Gamma\subseteq\lang$ be a $\deduc$-trivial set. So, $\Gamma\deduc\lambda_i$ for all $1\le i\le n$. Now, since $(\lang,\deduc)$ is finitary, for each $1\le i\le n$, there exists a finite $\Gamma_i\subseteq\Gamma$ such that $\Gamma_i\deduc\lambda_i$. Let $\Sigma=\displaystyle\bigcup_{i=1}^n\Gamma_i$. Then, by monotonicity of $\deduc$, $\Sigma\deduc\lambda_i$ for all $1\le i\le n$. So, by transitivity of $\deduc$, since $\Lambda$ is $\deduc$-trivial, $\Sigma$ is $\deduc$-trivial as well. Thus, $\Sigma$ is a finite $\deduc$-trivial subset of $\Gamma$.
\end{proof}

Using the above theorem and \hyperlink{thm:compact_normal_ams(I)}{\thmref{thm:compact_normal_ams(I)}}, we now obtain the following sufficient condition for a normal $\amst$ to be compact.  

\begin{cor}
    Suppose $\mstr=(\mathbf{M},\models,\mathcal{P}(\lang))$ is a normal $\mathsf{amst}$ and $(\lang,\deduc_{\mstr})$ the logical structure induced by $\mstr$ such that $(\lang,\deduc_\mstr)$ is finitary. If $\lang$ is not finitely satisfiable, then $\mstr$ is compact.
\end{cor}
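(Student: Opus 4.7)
The plan is to reduce the claim to verifying condition (5) of \hyperlink{thm:compact_normal_ams(I)}{\thmref{thm:compact_normal_ams(I)}} --- namely, that every $\deduc_\mstr$-trivial subset of $\lang$ contains a finite $\deduc_\mstr$-trivial subset. This is legitimate because the hypotheses of \hyperlink{thm:compact_normal_ams(I)}{\thmref{thm:compact_normal_ams(I)}} (normality of $\mstr$ and failure of $\lang$ to be finitely satisfiable) are built into our assumptions, so once (5) is established, compactness of $\mstr$ follows at once.

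To secure (5), I would invoke \hyperlink{thm:finitary}{\thmref{thm:finitary}} on the induced logical structure $(\lang,\deduc_\mstr)$. Three ingredients are required. First, $\deduc_\mstr$ must satisfy monotonicity and transitivity; these hold because $(\lang,\deduc_\mstr)$ is of Tarski-type, by \hyperlink{thm:rep_Tarski}{\thmref{thm:rep_Tarski}}(1), since $\mstr$ is normal. Second, $(\lang,\deduc_\mstr)$ is finitary by hypothesis. Third, and most importantly, one must exhibit a \emph{nonempty} finite $\deduc_\mstr$-trivial subset of $\lang$.

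The only real step, then, is producing that nonempty finite trivial witness. Since $\lang$ is not finitely satisfiable, there is a finite $\Lambda\subseteq\lang$ that is not satisfiable; by \hyperlink{thm:nsat=>triv}{\thmref{thm:nsat=>triv}}, such a $\Lambda$ is automatically $\deduc_\mstr$-trivial. To see that $\Lambda\neq\emptyset$, note that $\mathbf{M}\neq\emptyset$ by the very definition of an $\amst$, so \hyperlink{thm:cumulative(1)}{\thmref{thm:cumulative}(1)} gives $\Mod(\emptyset)=\mathbf{M}\neq\emptyset$; hence $\emptyset$ is satisfiable and any non-satisfiable set, including $\Lambda$, must be nonempty. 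Feeding this $\Lambda$ into \hyperlink{thm:finitary}{\thmref{thm:finitary}} yields condition (5), and \hyperlink{thm:compact_normal_ams(I)}{\thmref{thm:compact_normal_ams(I)}} then delivers compactness of $\mstr$. There is no substantive obstacle; the lone subtlety is not overlooking the nonemptiness of the witness $\Lambda$, which is exactly where the convention $\mathbf{M}\neq\emptyset$ is used.
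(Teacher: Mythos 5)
Your proposal is correct and follows essentially the same route as the paper: deduce monotonicity and transitivity of $\deduc_\mstr$ from \hyperlink{thm:rep_Tarski}{\thmref{thm:rep_Tarski}}, extract a finite non-satisfiable (hence $\deduc_\mstr$-trivial) $\Lambda\subseteq\lang$ from the failure of finite satisfiability, feed it to \hyperlink{thm:finitary}{\thmref{thm:finitary}}, and conclude via condition (5) of \hyperlink{thm:compact_normal_ams(I)}{\thmref{thm:compact_normal_ams(I)}}. Your explicit check that $\Lambda\neq\emptyset$ (via $\Mod(\emptyset)=\mathbf{M}\neq\emptyset$) is a small point the paper's proof leaves implicit, and it is indeed needed to meet the nonemptiness hypothesis of \hyperlink{thm:finitary}{\thmref{thm:finitary}}.
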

\begin{proof}
    Since $\mstr$ is normal, by \hyperlink{thm:rep_Tarski}{\thmref{thm:rep_Tarski}}, $(\lang,\deduc_\mstr)$ is of Tarski-type and hence, $\deduc_\mstr$ satisfies monotonicity and transitivity. Now, as $\lang$ is not finitely satisfiable, there exists a finite $\Lambda\subseteq \lang$ that is not satisfiable. Then, by \hyperlink{thm:nsat=>triv}{\thmref{thm:nsat=>triv}}, $\Lambda$ is $\deduc_\mstr$-trivial. So, by \hyperlink{thm:finitary}{\thmref{thm:finitary}}, every $\deduc_\mstr$-trivial set contains a finite $\deduc_\mstr$-trivial set. Hence, by \hyperlink{thm:compact_normal_ams(I)}{\thmref{thm:compact_normal_ams(I)}}, $\mstr$ is compact.
\end{proof}

{\hypertarget{subsec:com-top}{\subsection{Compactness and  Topology}}\label{subsec:com-top}}

In this section, we prove another characterization of compact normal $\mathsf{amst}$s via topological methods. The idea of such topological proofs dates back, at least, to 1952 in \cite{Tarski1952} and continues to the present day (see, e.g., \cite{Caicedo1993, Caicedo1999,Lewitzka2005, Lewitzka2007, LewitzkaBrunner2009}). Essentially, given a logic, the arguments in such proofs can be broken into the following steps:
\begin{enumerate}[label=(\alph*)]
    \item construction of a topological space from the logic under consideration,
    \item proving that the compactness theorem for the logic holds iff the associated topological space is compact and finally,
    \item proving that the topological space is indeed compact.
\end{enumerate}
We follow a similar line of argument. However, before entering into the details, we point out some key differences between our approach from those of the papers cited above.
\begin{enumerate}[label=$\bullet$]
    \item In \cite{Caicedo1993, Caicedo1999}, although some characterizations of general versions of (topological) compactness are discussed, the application of the results is restricted only to `model-theoretic logics' in the sense of \cite{Ebbinghaus1985, Lindstrom1969} and the proofs are often syntax-dependent (see, e.g., \cite[Lemma 2.3]{Caicedo1999}). Moreover, the canonical topological spaces associated with such logics are unlike the ones we investigate (cf. \cite[Section II]{Caicedo1999} and the topology defined in \hyperlink{thm:compact_normal_ams(II)}{\thmref{thm:compact_normal_ams(II)}}). 
    \item In \cite{Lewitzka2005, Lewitzka2007, LewitzkaBrunner2009}, topological approaches to the study of `syntax-free abstract logics' have been carried out in great detail. However, relatively little attention has been given to the form of compactness with which we are interested in the present exposition. The only theorem which can, perhaps, be seen as connected with the contents of this paper is \cite[Corollary 2.18]{LewitzkaBrunner2009} (\hyperlink{thm:compact_normal_ams(I)(6)}{\thmref{thm:compact_normal_ams(I)}(7)} is a form of this). Moreover, the topological space associated with an `abstract logic,' henceforth referred to as an \emph{LM-type abstract logic} (see \cite[Definition 2.1]{LewitzkaBrunner2009} for details), is also different from the ones we investigate (cf., e.g., \cite[Definition 3.5]{LewitzkaBrunner2009} and the topology defined in \hyperlink{thm:compact_normal_ams(II)}{\thmref{thm:compact_normal_ams(II)}}). However, each normal $\amst$ induces an $LM$-type abstract logic and vice versa. The results of this section thus complement those of the aforementioned papers. 
\end{enumerate}

We now state some basic definitions and a theorem (without proof) of point-set topology. One can easily find these in a standard textbook on topology, such as \cite{Kelley1975, Munkres2000}.   

\begin{defn}[\textsc{Topology, Base, Subbase}]
    \begin{enumerate}[label=(\roman*)]
        \item A \emph{topology} on a set $X$ is a collection $\tau$ of subsets of $X$, called the \emph{$\tau$-open subsets} of $X$, such that the following properties are satisfied.
        \begin{enumerate}[label=(\alph*)]
            \item $\emptyset$ and $X$ are $\tau$-open.
            \item If $(U_i)_{i\in I}$ is a sequence of $\tau$-open subsets of $X$, then so is $\displaystyle\bigcup_{i\in I}U_i$. 
            \item If $(U_i)_{i\in I}$ is a finite sequence of $\tau$-open subsets of $X$, then so is $\displaystyle\bigcap_{i\in I}U_i$. 
        \end{enumerate}
        The pair $(X,\tau)$ is then called a \emph{topological space}.
        
        A set $C\subseteq X$ is said to be \emph{$\tau$-closed} if $C=X\setminus U$ for some $U\in \tau$.
        
        \item Given a set $X$, a \emph{base} for a topology on $X$ is a collection $\beta$ of subsets of $X$ such that the following properties are satisfied.
        \begin{enumerate}[label=(\alph*)]
            \item $X\subseteq \displaystyle\bigcup_{B\in \beta}B$.
            \item If $U,V\in\beta$, then there exists $W\in \beta$ such that $W\subseteq U\cap V$.
        \end{enumerate}
        
        \item Given a set $X$, a \emph{subbase} for a topology on $X$ is a collection $\sigma$ of subsets of $X$, such that $X\subseteq \displaystyle\bigcup_{S\in \sigma}S$.
    \end{enumerate}
\end{defn}

\hypertarget{defn:top_gen_base/subbase}{\begin{defn}[\textsc{Topology Generated by Base/Subbase}]{\label{defn:top_gen_base/subbase}}
    \begin{enumerate}[label=(\roman*)]
        \item \ Suppose $X$ is a set and $\beta$ a collection of subsets of $X$ that satisfies the conditions in the above definition of a base for a topology on $X$. Then, the following collection of subsets of $X$ forms a topology on $X$. 
        \[
        \tau_\beta=\left\{U\subseteq X\mid\,\displaystyle\bigcup_{B\in \gamma}B=U~\text{for some}~\gamma\subseteq \beta\right\}
        \]
        The topology $\tau_\beta$ is called the \emph{topology generated by $\beta$}. 
        \hypertarget{defn:top_gen_base/subbase(2)}{\item Suppose $X$ is a set and $\sigma$ is a collection of subsets of $X$ that satisfies the condition in the above definition of a subbase for a topology on $X$. Let 
        \[
        \begin{array}{c}
        \sigma_\beta=\left\{U\subseteq X\mid\,\displaystyle\bigcap_{S\in \xi}S= U\hbox{ for some finite }\xi\subseteq\sigma\right\}\\
        \hbox{and}\\
        \tau_\sigma=\left\{U\subseteq X\mid\,\displaystyle\bigcup_{B\in \gamma}B=U\hbox{ for some }\gamma\subseteq \sigma_\beta\right\}
        \end{array}
        \]
        Then, $\tau_\sigma$ forms a topology on $X$ and is called the \emph{topology generated by $\sigma$}. In this case, the collection $\sigma_\beta$ is a base for a topology on $X$ and, in fact, $\tau_\sigma$ is the topology generated by $\sigma_\beta$.\label{defn:top_gen_base/subbase(2)}}
    \end{enumerate}
\end{defn}}

\begin{defn}[\textsc{Base/Subbase for a Topology}]
     Given a topological space $(X,\tau)$ and $\beta$ a base for some topology on $X$, $\beta$ is called a \emph{base for $(X,\tau)$}, if the topology generated by $\beta$ coincides with $\tau$, i.e., $\tau=\tau_\beta$. Similarly, a subbase $\sigma$ for some topology on $X$ is said to be a \emph{subbase for $(X,\tau)$} if the topology generated by $\sigma$ coincides with $\tau$, i.e., $\tau=\tau_\sigma$.     
\end{defn}

Let $(X,\tau)$ be a topological space. A set $\lambda\subseteq \mathcal{P}(X)$ is said to \emph{cover} $X$ if $X\subseteq \displaystyle\bigcup_{U\in \lambda}U$. A \emph{subcover} of $\lambda$ is any set $\rho\subseteq \lambda$ that covers $X$; $\rho$ is called a \emph{finite subcover} if it is finite. 

A cover $\lambda$ of $X$ is said to be a \emph{$\tau$-open cover} (respectively, \emph{subbasic $\tau$-open cover}) of $X$, if $\lambda\subseteq \tau$ (respectively, $\lambda\subseteq \sigma$, where $\sigma$ is a subbase for $(X,\tau)$).

\begin{defn}[\textsc{Compact Space}]
    A topological space $(X,\tau)$ is said to be \emph{compact} if every \emph{$\tau$-open cover} has a finite subcover.
\end{defn}

\hypertarget{thm:char_compsp}{\begin{thm}[\textsc{Alexander Subbase Theorem}]{\label{thm:char_compsp}}
Suppose $(X,\tau)$ is a topological space and $\sigma$ a subbase for it. Then, $(X,\tau)$ is compact iff every subbasic $\tau$-open cover has a finite subcover.
\end{thm}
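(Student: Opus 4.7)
The forward direction is immediate: a subbasic $\tau$-open cover is itself a $\tau$-open cover, so compactness of $(X,\tau)$ supplies a finite subcover. For the converse, I would argue by contradiction using Zorn's lemma, in the same spirit as the Extension Lemma (\hyperlink{thm:finsat=>maxfinsat}{\thmref{thm:finsat=>maxfinsat}}).

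Assume $(X,\tau)$ is not compact, and let $\mathcal{F}$ be the family, ordered by inclusion, of all $\tau$-open covers of $X$ that admit no finite subcover. Then $\mathcal{F}\ne\emptyset$, and for any chain $\mathcal{C}\subseteq\mathcal{F}$ the union $\bigcup\mathcal{C}$ is again a cover with no finite subcover: any purported finite subcover of $\bigcup\mathcal{C}$ would, by finiteness and the chain property, already lie inside a single member of $\mathcal{C}$, contradicting membership of that element in $\mathcal{F}$. Zorn's lemma thus yields a maximal $\mathcal{M}\in\mathcal{F}$. Two features of $\mathcal{M}$ are crucial: (i) for every $\tau$-open $V\notin\mathcal{M}$, the family $\mathcal{M}\cup\{V\}$ admits a finite subcover, by maximality; and (ii) $\mathcal{M}\cap\sigma$ cannot cover $X$, for otherwise this subbasic cover would — by hypothesis on $\sigma$ — possess a finite subcover, contradicting $\mathcal{M}\in\mathcal{F}$.

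Using (ii), pick $x\in X$ uncovered by $\mathcal{M}\cap\sigma$, and then some $U\in\mathcal{M}$ with $x\in U$. Since $\sigma$ is a subbase for $(X,\tau)$, \hyperlink{defn:top_gen_base/subbase}{\defref{defn:top_gen_base/subbase}} provides $S_1,\dots,S_n\in\sigma$ with $x\in S_1\cap\cdots\cap S_n\subseteq U$, and no $S_i$ lies in $\mathcal{M}$ (else $x$ would be covered by $\mathcal{M}\cap\sigma$). By (i), for each $i$ there is a finite $\mathcal{M}_i\subseteq\mathcal{M}$ with $\left(\bigcup\mathcal{M}_i\right)\cup S_i = X$, that is, $X\setminus\bigcup\mathcal{M}_i\subseteq S_i$. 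Intersecting over $i$,
\[
X\setminus\bigcup_{i=1}^{n}\bigcup\mathcal{M}_i \;\subseteq\; \bigcap_{i=1}^{n}S_i \;\subseteq\; U,
\]
so $\mathcal{M}_1\cup\cdots\cup\mathcal{M}_n\cup\{U\}$ is a finite subfamily of $\mathcal{M}$ covering $X$ — contradicting $\mathcal{M}\in\mathcal{F}$. The main obstacle is this last intersection-to-union passage, where the finiteness of $\{S_1,\dots,S_n\}$ (a consequence of $\sigma$ being a \emph{subbase}, per \hyperlink{defn:top_gen_base/subbase(2)}{\defref{defn:top_gen_base/subbase}}(2)) is precisely what lets one repackage $n$ almost-covers into a single genuine finite subcover; the preceding Zorn-plus-maximality setup is essentially bookkeeping.
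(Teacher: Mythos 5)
Your argument is correct: it is the classical Zorn's-lemma proof of the Alexander subbase theorem, and every step checks out — the maximal cover $\mathcal{M}$ with no finite subcover, the observation that $\mathcal{M}\cap\sigma$ cannot cover $X$, and the repackaging of the $n$ almost-covers $\mathcal{M}_1,\dots,\mathcal{M}_n$ together with $U$ into a finite subcover of $\mathcal{M}$. Note that the paper deliberately states this theorem without proof, referring the reader to standard topology texts, so there is no in-paper argument to compare against; the only point worth making explicit in your write-up is that one may always take $n\ge 1$ in the step producing $S_1,\dots,S_n$ (if the only basic set with $x\in B\subseteq U$ were the empty intersection $B=X$, then $U=X$ and $\{U\}$ would already be a finite subcover of $\mathcal{M}$; alternatively, the paper's definition of a subbase requires $\sigma$ to cover $X$, so some $S\in\sigma$ contains $x$ and serves when $U=X$).
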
}

We next proceed to a second characterization of compact normal $\amst$s using the above topological tools.

\hypertarget{thm:compact_normal_ams(II)}{\begin{thm}[\textsc{Compactness for Normal $\mathsf{amst}$ (II)}]{\label{thm:compact_normal_ams(II)}}
Suppose $\mstr=(\mathbf{M},\models,\mathcal{P}(\lang))$ is a normal $\amst$ such that $\lang$ is not satisfiable. Let $\tau_N$ be the topology generated by $\{\mathbf{M}\setminus\Mod(\{\alpha\})\mid\,\alpha\in\lang\}$ as a subbase. Then, $\mstr$ is compact iff $(\mathbf{M},\tau_N)$ is compact topological space.
\end{thm}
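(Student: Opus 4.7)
The plan is to leverage the Alexander Subbase Theorem (\hyperlink{thm:char_compsp}{\thmref{thm:char_compsp}}) since the topology $\tau_N$ is defined via a very natural subbase. First, I would verify that $\sigma := \{\mathbf{M}\setminus\Mod(\{\alpha\})\mid\,\alpha\in\lang\}$ is indeed a valid subbase, i.e., that it covers $\mathbf{M}$. By \hyperlink{thm:cumulative(1)}{\thmref{thm:cumulative}(1)}, $\Mod(\lang)=\bigcap_{\alpha\in\lang}\Mod(\{\alpha\})$, and this set is empty precisely because $\lang$ is not satisfiable in $\mstr$. Taking complements gives $\mathbf{M}=\bigcup_{\alpha\in\lang}(\mathbf{M}\setminus\Mod(\{\alpha\}))$, so $\sigma$ covers $\mathbf{M}$. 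This is where the hypothesis that $\lang$ is not satisfiable is used essentially.

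For the forward direction, assume $\mstr$ is compact. By \hyperlink{thm:char_compsp}{\thmref{thm:char_compsp}}, it suffices to show that every subbasic open cover of $\mathbf{M}$ admits a finite subcover. Given such a cover $\{\mathbf{M}\setminus\Mod(\{\alpha\})\mid\,\alpha\in\Gamma\}$ with $\Gamma\subseteq\lang$, taking complements and applying normality via \hyperlink{thm:cumulative(1)}{\thmref{thm:cumulative}(1)} yields $\Mod(\Gamma)=\bigcap_{\alpha\in\Gamma}\Mod(\{\alpha\})=\emptyset$, so $\Gamma$ is not satisfiable in $\mstr$. Note that $\lang$ being not satisfiable implies, using normality again, that $\lang$ is not finitely satisfiable (any $m$ satisfying every finite subset would satisfy every singleton, hence all of $\lang$). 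Thus compactness of $\mstr$ gives a finite $\Gamma_0\subseteq\Gamma$ that is not satisfiable, i.e., $\Mod(\Gamma_0)=\emptyset$, which converts back to the finite subcover $\{\mathbf{M}\setminus\Mod(\{\alpha\})\mid\,\alpha\in\Gamma_0\}$.

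For the reverse direction, assume $(\mathbf{M},\tau_N)$ is compact and let $\Gamma\subseteq\lang$ be finitely satisfiable; we show $\Gamma$ is satisfiable. Suppose for contradiction that $\Mod(\Gamma)=\emptyset$. Then by \hyperlink{thm:cumulative(1)}{\thmref{thm:cumulative}(1)}, $\bigcap_{\alpha\in\Gamma}\Mod(\{\alpha\})=\emptyset$, so $\{\mathbf{M}\setminus\Mod(\{\alpha\})\mid\,\alpha\in\Gamma\}$ is a subbasic open cover of $\mathbf{M}$. By \hyperlink{thm:char_compsp}{\thmref{thm:char_compsp}} and the compactness of $(\mathbf{M},\tau_N)$, there is a finite $\Gamma_0\subseteq\Gamma$ such that $\{\mathbf{M}\setminus\Mod(\{\alpha\})\mid\,\alpha\in\Gamma_0\}$ already covers $\mathbf{M}$. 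Reversing the same calculation gives $\Mod(\Gamma_0)=\emptyset$, contradicting the finite satisfiability of $\Gamma$. Hence $\Gamma$ is satisfiable, proving $\mstr$ is compact.

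The proof is largely routine once the Alexander Subbase Theorem is invoked; the only nontrivial bookkeeping is the consistent use of normality (via \hyperlink{thm:cumulative}{\thmref{thm:cumulative}}) to pass between $\Mod(\Gamma)$ and $\bigcap_{\alpha\in\Gamma}\Mod(\{\alpha\})$, and the observation that the hypothesis ``$\lang$ is not satisfiable'' is exactly what guarantees $\sigma$ is a subbase. No step poses a real obstacle; the main conceptual content is recognizing that unsatisfiability of a set corresponds \emph{exactly} to its associated family of subbasic opens being a cover, which makes the translation between logical compactness and topological compactness completely transparent.
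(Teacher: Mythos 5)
Your proof is correct and follows essentially the same route as the paper's: both rest on the Alexander Subbase Theorem together with the normality identity $\Mod(\Gamma)=\bigcap_{\alpha\in\Gamma}\Mod(\{\alpha\})$, which makes unsatisfiability of $\Gamma$ equivalent to $\{\mathbf{M}\setminus\Mod(\{\alpha\})\mid\alpha\in\Gamma\}$ covering $\mathbf{M}$ (the paper merely phrases both directions as arguments by contradiction). One inessential slip: your parenthetical claim that non-satisfiability of $\lang$ yields non-finite-satisfiability ``using normality'' is false in general, since finite satisfiability does not provide a single $m$ satisfying every finite subset; the implication does hold in that spot only because you have already assumed $\mstr$ compact, and in any case it is not needed for your argument.
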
}

\begin{proof}
We first show that the collection $\{\mathbf{M}\setminus\Mod(\{\alpha\})\mid\,\alpha\in\lang\}$ is a subbase for some topology on $\mathbf{M}$. Suppose the contrary. Then, there exists $m\in\mathbf{M}$ such that $m\notin\displaystyle\bigcup\{\mathbf{M}\setminus\Mod(\{\alpha\})\mid\,\alpha\in\lang\}$. This implies that $m\in\Mod(\{\alpha\})$ for all $\alpha\in\lang$, i.e., $m\models\{\alpha\}$ for all $\alpha\in\lang$. Since $\mstr$ is normal, this shows that $m\models\lang$. So, $\lang$ is satisfiable. This, however, is a contradiction. Thus, $\mathbf{M}\subseteq\displaystyle\bigcup\{\mathbf{M}\setminus\Mod(\{\alpha\})\mid\,\alpha\in\lang\}$ and so, $\{\mathbf{M}\setminus\Mod(\{\alpha\})\mid\,\alpha\in\lang\}$ is a subbase for some topology on $\mathbf{M}$.

Suppose $\mstr$ is compact but $(\mathbf{M},\tau_N)$ is not a compact topological space. Let $\lambda=\{\mathbf{M}\setminus\Mod(\{\beta_i\})\mid\,i\in I\}$ be a subbasic $\tau_N$-open cover of $\mathbf{M}$ that has no finite subcover and $\Sigma=\{\beta_i:i\in I\}$. 

We first show that $\Sigma$ is finitely satisfiable as follows. Let $\{\beta_0,\ldots,\beta_n\}\subseteq \Sigma$. Since $\lambda$ has no finite subcover, $\displaystyle\bigcup_{i=0}^n\mathbf{M}\setminus \Mod(\{\beta_i\})\ne\mathbf{M}$, or equivalently,  $\displaystyle\bigcap_{i=0}^n\Mod(\{\beta_i\})\ne\emptyset$. Now, by \hyperlink{thm:cumulative(1)}{\thmref{thm:cumulative}(1)}, $\Mod(\{\beta_0,\ldots,\beta_n\})=\displaystyle\bigcap_{i=0}^n\Mod(\{\beta_i\})$. So, $\Mod(\{\beta_0,\ldots,\beta_n\})\neq\emptyset$, i.e., $\{\beta_0,\ldots,\beta_n\}$ is satisfiable. Hence, $\Sigma$ is finitely satisfiable. Then, as $\mstr$ is compact, $\Sigma$ is satisfiable as well. So, $\Mod(\Sigma)\neq\emptyset$. Now, by \hyperlink{thm:cumulative(1)}{\thmref{thm:cumulative}(1)}), $\Mod(\Sigma)=\displaystyle\bigcap_{i\in I}\Mod(\{\beta_i\})$. Thus, $\displaystyle\bigcap_{i\in I}\Mod(\{\beta_i\})\ne\emptyset$, which implies that $\mathbf{M}\setminus \displaystyle\bigcap_{i\in I}\Mod(\{\beta_i\})=\displaystyle\bigcup_{i\in I}(\mathbf{M}\setminus \Mod(\{\beta_i\}))=\bigcup\lambda\ne \mathbf{M}$. This contradicts the assumption that $\lambda$ covers $\mathbf{M}$. Thus, every subbasic $\tau_N$-open cover of $\mathbf{M}$ has a finite subcover. Hence, by \hyperlink{thm:char_compsp}{\thmref{thm:char_compsp}}, $(\mathbf{M},\tau_N)$ is a compact topological space.  

Conversely, suppose $(\mathbf{M},\tau_N)$ is a compact topological space but $\mstr$ is not compact. Then, there exists $\Sigma\subseteq \lang$ such that $\Mod(\Sigma)=\emptyset$, while $\Mod(\Sigma_0)\ne\emptyset$ for all finite $\Sigma_0\subseteq\Sigma$. 
Now, by \hyperlink{thm:cumulative(1)}{\thmref{thm:cumulative}(1)}, $\Mod(\Sigma)=\displaystyle\bigcap_{\beta\in \Sigma}\Mod(\{\beta\})$. So, $\displaystyle\bigcap_{\beta\in \Sigma}\Mod(\{\beta\})=\emptyset$ and hence, $\displaystyle\bigcup_{\beta\in\Sigma}(\mathbf{M}\setminus \Mod(\{\beta\}))=\mathbf{M}\setminus \displaystyle\bigcap_{\beta\in\Sigma}\Mod(\{\beta\})=\mathbf{M}$. So, $\{\mathbf{M}\setminus\Mod(\{\beta\})\mid\,\beta\in \Sigma\}$ covers $\mathbf{M}$. Moreover, $\{\mathbf{M}\setminus\Mod(\{\beta\})\mid\,\beta\in \Sigma\}$ is a subbasic $\tau_N$-open cover of $\mathbf{M}$. Since $(\mathbf{M},\tau_N)$ is a compact topological space, by \hyperlink{thm:char_compsp}{\thmref{thm:char_compsp}}, there exists a finite subcover of it. Let $\{\mathbf{M}\setminus\Mod(\{\beta_0\}),\ldots, \mathbf{M}\setminus\Mod(\{\beta_n\})\}$ be such a finite subcover. Then, $\displaystyle\bigcup_{i=0}^n(\mathbf{M}\setminus\Mod(\{\beta_i\}))=\mathbf{M}$ and hence, $\displaystyle\bigcap_{i=0}^n\Mod(\{\beta_i\})=\emptyset$. Again, by \hyperlink{thm:cumulative(1)}{\thmref{thm:cumulative}(1)}, $\Mod(\{\beta_0,\ldots,\beta_n\})=\displaystyle\bigcap_{i=0}^n\Mod(\{\beta_i\})$. Thus, $\Mod(\{\beta_0,\ldots,\beta_n\}=\emptyset$. This is a contradiction since $\{\beta_0,\ldots,\beta_n\}$ is a finite subset of $\Sigma$. Hence, $\mstr$ is compact.
\end{proof}

Given an $\amst$ $\mstr=(\mathbf{M},\models,\pow(\lang))$, there are two maps $\Mod:\mathcal{P}(\lang)\to \mathcal{P}(\mathbf{M})$ and $\Th:\mathcal{P}(\mathbf{M})\to \mathcal{P}(\lang)$. We now consider the composite of these, viz., $\Mod\circ \Th:\mathcal{P}(\mathbf{M})\to \mathcal{P}(\mathbf{M})$ and end this section with an alternative description of the topology described in \hyperlink{thm:compact_normal_ams(II)}{\thmref{thm:compact_normal_ams(II)}} in terms of this map. 

\hypertarget{thm:Tarski_top}{\begin{thm}{\label{thm:Tarski_top}}
    Suppose $\mstr=(\mathbf{M},\models,\mathcal{P}(\lang))$ is a normal $\mathsf{amst}$ such that $\lang$ is not satisfiable. Let $\tau_N$ be the topology described in \hyperlink{thm:compact_normal_ams(II)}{\thmref{thm:compact_normal_ams(II)}}. Then, any set of the form $Z=\Mod(\Sigma)$, where $\Sigma\subseteq\lang$, is a $\tau_N$-closed set. Moreover, for any such $\tau_N$-closed $Z$, $(\Mod\circ\Th)(Z)=Z$.
\end{thm}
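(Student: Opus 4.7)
The plan has two essentially independent parts, and both reduce to bookkeeping with the identities already established for $\Mod$ and $\Th$.

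For the first claim that every $Z = \Mod(\Sigma)$ is $\tau_N$-closed, I would apply \hyperlink{thm:cumulative(1)}{\thmref{thm:cumulative}(1)} to write $\Mod(\Sigma) = \bigcap_{\alpha \in \Sigma} \Mod(\{\alpha\})$. Taking complements in $\mathbf{M}$ gives
\[
\mathbf{M}\setminus \Mod(\Sigma) \;=\; \bigcup_{\alpha \in \Sigma}\bigl(\mathbf{M}\setminus\Mod(\{\alpha\})\bigr),
\]
which is a union of subbasic $\tau_N$-open sets and hence $\tau_N$-open. Therefore $\Mod(\Sigma)$ is $\tau_N$-closed. (The case $\Sigma = \emptyset$ is absorbed into the convention $\Mod(\emptyset) = \mathbf{M}$, which is itself $\tau_N$-closed.)

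For the second claim, fix $Z = \Mod(\Sigma)$. To show $(\Mod\circ\Th)(Z) \subseteq Z$, I would invoke \hyperlink{thm:cumulative_th(5)}{\thmref{thm:cumulative_th}(5)}, which gives $\Sigma \subseteq \Th(\Mod(\Sigma)) = \Th(Z)$; applying the order-reversing property of $\Mod$ in \hyperlink{thm:cumulative(2)}{\thmref{thm:cumulative}(2)} then yields $\Mod(\Th(Z)) \subseteq \Mod(\Sigma) = Z$. For the reverse inclusion $Z \subseteq (\Mod\circ\Th)(Z)$, take $m \in Z$ and any $\alpha \in \Th(Z)$. By the definition of $\Th$, every element of $Z$ satisfies $\{\alpha\}$, so in particular $m \models \{\alpha\}$. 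Since this holds for all $\alpha \in \Th(Z)$ and $\mstr$ is normal, $m \models \Th(Z)$, i.e., $m \in \Mod(\Th(Z))$. Combining the two inclusions gives $(\Mod\circ\Th)(Z) = Z$.

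Neither step looks hard; the mild subtlety, such as it is, lies in noticing that one uses normality in \emph{both} directions — once implicitly through \hyperlink{thm:cumulative(1)}{\thmref{thm:cumulative}(1)} and \hyperlink{thm:cumulative_th(5)}{\thmref{thm:cumulative_th}(5)} for the $\subseteq$ direction, and once directly for the $\supseteq$ direction to convert pointwise satisfaction of each $\alpha \in \Th(Z)$ into satisfaction of the whole set $\Th(Z)$. The non-satisfiability of $\lang$ plays no role in the argument itself; it is used only to guarantee that $\tau_N$ is well defined (cf. \hyperlink{thm:compact_normal_ams(II)}{\thmref{thm:compact_normal_ams(II)}}).
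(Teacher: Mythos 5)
Your proposal is correct and follows essentially the same route as the paper: the closedness claim via $\Mod(\Sigma)=\bigcap_{\alpha\in\Sigma}\Mod(\{\alpha\})$ and complementation into subbasic opens, the inclusion $(\Mod\circ\Th)(Z)\subseteq Z$ via \hyperlink{thm:cumulative_th(5)}{\thmref{thm:cumulative_th}(5)} and the antitonicity of $\Mod$, and the reverse inclusion by the same normality argument the paper uses to show $M\subseteq(\Mod\circ\Th)(M)$. The only difference is cosmetic: the paper first records the full closure-operator properties of $\Mod\circ\Th$ as a preliminary, whereas you prove only the two inclusions actually needed.
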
}

\begin{proof}
    We first show that for all $M\cup N\subseteq \mathbf{M}$ the following statements hold. 
    \begin{enumerate}[label=(\alph*)]
        \item $M\subseteq (\Mod\circ\Th)(M)$.
        \item If $M\subseteq N$, then $(\Mod\circ\Th)(M)\subseteq (\Mod\circ\Th)(N)$.
        \item $(\Mod\circ\Th)((\Mod\circ\Th)(M))=(\Mod\circ\Th)(M)$.        
    \end{enumerate}
    The above statements, in fact, amount to showing that $(\mathbf{M},\Mod\circ\Th)$ is a Tarski-type logical structure in the sense described in \hyperlink{rem:operator}{\remref{rem:operator}}. 
    \begin{enumerate}[label=(\alph*)]
        \item Let $m\in\mathbf{M}$ such that $m\notin\Mod(\Th(M))$. Then, $m\not\models\Th(M)$. Since $\mstr$ is normal, this implies that $m\not\models\{\alpha\}$ for some $\alpha\in\Th(M)$. So, $m\notin M$. Hence, $M\subseteq(\Mod\circ\Th)(M)$.
        \item Let $M\subseteq N$. So, by \hyperlink{thm:cumulative_th(2)}{\thmref{thm:cumulative_th}(2)}, $\Th(N)\subseteq \Th(M)$. Then, by \hyperlink{thm:cumulative(2)}{\thmref{thm:cumulative}(2)}, $\Mod(\Th(M))\subseteq \Mod(\Th(N))$. Hence, $(\Mod\circ\Th)(M)\subseteq (\Mod\circ\Th)(N)$.
        \item We note that by \hyperlink{thm:cumulative_th(5)}{\thmref{thm:cumulative_th}(5)}, $\Th(M)\subseteq\Th(\Mod(\Th(M)))$. So, by \hyperlink{thm:cumulative(2)}{\thmref{thm:cumulative}(2)}, $\Mod(\Th((\Mod(\Th(M)))))\subseteq\Mod(\Th(M))$. The reverse inclusion follows by statement (a). Thus, $(\Mod\circ\Th)((\Mod\circ\Th)(M))=(\Mod\circ\Th)(M)$.
    \end{enumerate}
    
    Now, let $Z=\Mod(\Sigma)$ for some $\Sigma\subseteq\lang$. Then, by \hyperlink{thm:cumulative(3)}{\thmref{thm:cumulative}(3)},
    \[
        Z=\Mod(\Sigma)=\Mod\left(\displaystyle\bigcup_{\alpha\in\Sigma}\{\alpha\}\right)=\bigcap_{\alpha\in\Sigma}\Mod(\{\alpha\}).
    \]
    Thus, $\mathbf{M}\setminus Z=\mathbf{M}\setminus\displaystyle\bigcap_{\alpha\in\Sigma}\Mod(\{\alpha\})=\displaystyle\bigcup_{\alpha\in\Sigma}\left(\mathbf{M}\setminus\Mod(\{\alpha\})\right)$, which is a union of elements from the subbasis generating $\tau_N$. Since elements of a subbasis are open and union of open sets is open, $\mathbf{M}\setminus Z$ is open, i.e., $Z$ is $\tau_N$-closed.
    
    We next note that $Z\subseteq(\Mod\circ\Th)(Z)$ by statement (b) above. Now, 
    \[
    (\Mod\circ\Th)(Z)=(\Mod\circ\Th)(\Mod(\Sigma))=\Mod(\Th(\Mod(\Sigma)))
    \]
    Since $\mstr$ is normal, by \hyperlink{thm:cumulative_th(5)}{\thmref{thm:cumulative_th}(5)}, $\Sigma\subseteq\Th(\Mod(\Sigma))$. Then, by \hyperlink{thm:cumulative(2)}{\thmref{thm:cumulative}(2)}, $\Mod(\Th(\Mod(\Sigma)))\subseteq\Mod(\Sigma)$, i.e., $(\Mod\circ\Th)(Z)\subseteq Z$. Hence, $(\Mod\circ\Th)(Z)= Z$.
\end{proof}

{\hypertarget{subsec:comp-ultra}{\subsection{Compactness and Ultramodels}\label{subsec:comp-ultra}}}

 We note that \hyperlink{thm:compact_normal_ams(I)}{\thmref{thm:compact_normal_ams(I)}} and \hyperlink{thm:compact_normal_ams(II)}{\thmref{thm:compact_normal_ams(II)}} give necessary and sufficient conditions for a normal $\mathsf{amst}$ to be compact. However, these results give no clue about how to construct a model satisfying a finitely satisfiable set $\Gamma$ using the models for its finite subsets. This, however, might be desirable sometimes. A different approach along the lines of \emph{Łoś’s theorem} (see \cite{ChangeKeisler1990,Los1955} for details) sheds some light on this question. A similar line of investigation is pursued in this section. The main tool in the following study is \emph{ultralimits}, introduced in \cite{Bernstein1970} for ultrafilters over the countable cardinal $\omega$, and later extended in \cite{Saks1978} for ultrafilters over arbitrary cardinals. Our treatment of these notions, however, closely follows \cite{Goldbring2022}, which defines ultralimits without reference to the cardinality of the index set. We briefly recall some definitions and theorems on ultrafilters and ultralimits before going into the technicalities.

\begin{defn}[\textsc{Filter}]Given a set $I$, a \emph{filter on $I$} is a set $\mathcal{F}\subseteq \mathcal{P}(I)$ satisfying the following properties. 
\begin{enumerate}[label=(\alph*)]
    \item $X\in \mathcal{F}$ and $Y\in \mathcal{F}$ implies that $X\cap Y\in \mathcal{F}$.
    \item $X\subseteq Y$ and $X\in \mathcal{F}$ implies that $Y\in \mathcal{F}$.
\end{enumerate}
A filter $\mathcal{F}$ on $I$ is said to be a \emph{proper filter} if $\emptyset\notin \mathcal{F}$. 
\end{defn}

\begin{defn}[\textsc{Finite Intersection Property}]
  Suppose $I$ is a set and $\mathcal{D}\subseteq\pow(I)$. $\mathcal{D}$ is said to satisfy the \emph{finite intersection property (FIP)} if, for all $D_1,\ldots,D_n\in\mathcal{D}$, $D_1\cap\cdots\cap D_n\neq\emptyset$.  
\end{defn}

\hypertarget{thm:fip}{\begin{thm}{\label{thm:fip}}
Suppose $I$ is a set and $\mathcal{D}\subseteq\pow(I)$ such that $\mathcal{D}$ satisfies FIP. Then, the following collection of subsets of $I$ is a proper filter on $I$ containing $\mathcal{D}$.
\[
\{F\subseteq I\mid\,D_1\cap\cdots\cap D_n\subseteq F\hbox{ for some }D_1,\ldots,D_n\in\mathcal{D}\}
\]
This is, in fact, the smallest filter containing $\mathcal{D}$ and is called the filter generated by $\mathcal{D}$.
\end{thm}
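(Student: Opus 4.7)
The plan is to verify the four claims in turn by unpacking the definition of the candidate collection
\[
\mathcal{F}=\{F\subseteq I\mid\,D_1\cap\cdots\cap D_n\subseteq F\hbox{ for some }D_1,\ldots,D_n\in\mathcal{D}\}.
\]
First I would check that $\mathcal{F}$ is a filter. For upward closure, if $F\in\mathcal{F}$ and $F\subseteq G$, then a witness $D_1\cap\cdots\cap D_n\subseteq F$ for $F$ immediately serves as a witness for $G$. For closure under binary intersections, given $F_1,F_2\in\mathcal{F}$ with witnesses $D_1,\ldots,D_n$ and $D_1',\ldots,D_m'$ respectively, the list $D_1,\ldots,D_n,D_1',\ldots,D_m'$ is a finite subfamily of $\mathcal{D}$ whose total intersection lies in $F_1\cap F_2$, so $F_1\cap F_2\in\mathcal{F}$.

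Next I would show $\mathcal{F}$ is proper: if $\emptyset\in\mathcal{F}$ then some finite subfamily $D_1,\ldots,D_n\in\mathcal{D}$ would satisfy $D_1\cap\cdots\cap D_n\subseteq\emptyset$, i.e., $D_1\cap\cdots\cap D_n=\emptyset$, directly contradicting the hypothesis that $\mathcal{D}$ has the finite intersection property. That $\mathcal{D}\subseteq\mathcal{F}$ is immediate: for each $D\in\mathcal{D}$, taking the one-element subfamily $\{D\}$ gives $D\subseteq D$, so $D\in\mathcal{F}$.

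Finally, to establish minimality, I would take an arbitrary filter $\mathcal{G}$ on $I$ with $\mathcal{D}\subseteq\mathcal{G}$ and show $\mathcal{F}\subseteq\mathcal{G}$. Given $F\in\mathcal{F}$, choose witnesses $D_1,\ldots,D_n\in\mathcal{D}\subseteq\mathcal{G}$ with $D_1\cap\cdots\cap D_n\subseteq F$. A straightforward induction on $n$, using the binary intersection property of $\mathcal{G}$, shows $D_1\cap\cdots\cap D_n\in\mathcal{G}$; then upward closure of $\mathcal{G}$ forces $F\in\mathcal{G}$.

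There is no genuine obstacle here — the argument is bookkeeping around the definition of $\mathcal{F}$. The only mild subtlety is being careful when $n=0$: one must either restrict to nonempty finite subfamilies of $\mathcal{D}$ in the definition of $\mathcal{F}$, or interpret the empty intersection as $I$ and check that $\mathcal{D}\neq\emptyset$ (which follows, for instance, from FIP if one stipulates that FIP is meant for nonempty finite subfamilies, as is standard).
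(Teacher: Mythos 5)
Your proof is correct. Note that the paper states this result without proof, as standard background on filters recalled at the start of \S3.3, so there is no argument of the authors' to compare against; your verification (upward closure via the same witness, intersection via concatenating witness lists, properness directly from FIP, containment via singleton witnesses, and minimality by induction on finite intersections in an arbitrary filter extending $\mathcal{D}$) is exactly the standard argument one would supply. Your remark about the $n=0$ case is sensible bookkeeping; with the usual reading that the witnesses form a nonempty finite list, nothing goes wrong, and in the paper's only application of this theorem the generating family is nonempty anyway.
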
}

\begin{defn}[\textsc{Ultrafilter}]
    Given a set $I$, an \emph{ultrafilter on $I$} is a maximal proper filter on $I$. In other words, an ultrafilter on $I$ is a proper filter $\mathcal{U}$ on $I$ such that for every filter $\mathcal{F}\supsetneq \mathcal{U}$, $\emptyset\in \mathcal{F}$.
\end{defn}

\hypertarget{thm:ultrafilter_char}{\begin{thm}{\label{thm:ultrafilter_char}}
    Suppose $I$ is a set and $\mathcal{U}$ a filter on $I$. Then, the following statements hold.
    \begin{enumerate}[label=(\arabic*)]
        \hypertarget{thm:ultrafilter_char(1)}{\item $\mathcal{U}$ is an ultrafilter on $I$ iff for all $A\subseteq I$, either $A\in \mathcal{U}$ or $I\setminus A\in \mathcal{U}$, but not both.}\label{thm:ultrafilter_char(1)}
        \hypertarget{thm:ultrafilter_char(2)}{\item If $\mathcal{U}$ is an ultrafilter on $I$ and $A_1,\ldots,A_n$ are
        subsets of $I$ such that $A_1 \cup\cdots\cup A_n \in \mathcal{U}$, then there exists $i \in \{1,\ldots,n\}$ such that $A_i \in \mathcal{U}$.\label{thm:ultrafilter_char(2)}} 
        \hypertarget{thm:ultrafilter_char(3)}{\item Given any proper filter $\mathcal{F}$ on $I$, there exists an ultrafilter $\mathcal{U}$ on $I$ such that $\mathcal{F}\subseteq\mathcal{U}$.\label{thm:ultrafilter_char(3)}}
    \end{enumerate}
\end{thm}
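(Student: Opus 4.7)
The plan is to treat the three parts in order, reusing (1) to shortcut (2), and using Zorn's lemma for (3); the only subtle step is the existence direction of (1), which rests on an FIP argument.

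For (1), the ``not both'' half is immediate: if $A$ and $I\setminus A$ both belong to the filter $\mathcal{U}$, then by the intersection axiom $\emptyset=A\cap(I\setminus A)\in\mathcal{U}$, contradicting properness. For the ``at least one'' half, I would argue by contraposition from maximality. Suppose $A\subseteq I$ with neither $A$ nor $I\setminus A$ in $\mathcal{U}$. I first claim that $\mathcal{U}\cup\{A\}$ satisfies the FIP. Indeed, if some $U\in\mathcal{U}$ satisfied $U\cap A=\emptyset$, then $U\subseteq I\setminus A$, and by the upward-closure axiom $I\setminus A\in\mathcal{U}$, contradicting our assumption; hence for all $U\in\mathcal{U}$, $U\cap A\neq\emptyset$, and closure of $\mathcal{U}$ under finite intersections upgrades this to FIP for $\mathcal{U}\cup\{A\}$. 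By \thmref{thm:fip}, the filter generated by $\mathcal{U}\cup\{A\}$ is proper and strictly contains $\mathcal{U}$ (since $A$ is in it but not in $\mathcal{U}$), contradicting maximality. The converse direction of (1) is short: if the dichotomy holds and $\mathcal{F}\supsetneq\mathcal{U}$ is any strictly larger filter, pick $A\in\mathcal{F}\setminus\mathcal{U}$; then $I\setminus A\in\mathcal{U}\subseteq\mathcal{F}$, so $\emptyset=A\cap(I\setminus A)\in\mathcal{F}$, forcing any proper extension to be impossible.

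For (2), I would use (1) directly. Suppose $A_1\cup\cdots\cup A_n\in\mathcal{U}$ but no $A_i\in\mathcal{U}$. By (1), $I\setminus A_i\in\mathcal{U}$ for each $i$, so by finite intersection $\bigcap_{i=1}^{n}(I\setminus A_i)=I\setminus\bigcup_{i=1}^{n}A_i\in\mathcal{U}$. Intersecting with $\bigcup_{i=1}^{n}A_i\in\mathcal{U}$ yields $\emptyset\in\mathcal{U}$, contradicting properness.

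For (3), I would apply Zorn's lemma to the poset $\mathcal{T}=\{\mathcal{G}\mid \mathcal{G}\hbox{ is a proper filter on }I,\,\mathcal{F}\subseteq\mathcal{G}\}$, ordered by $\subseteq$. The set $\mathcal{T}$ is nonempty since $\mathcal{F}\in\mathcal{T}$. Given a chain $\mathcal{C}\subseteq\mathcal{T}$, I would verify that $\mathcal{G}^\ast=\bigcup_{\mathcal{G}\in\mathcal{C}}\mathcal{G}$ is again a proper filter containing $\mathcal{F}$: upward closure follows componentwise; closure under finite intersection uses that any two elements of $\mathcal{G}^\ast$ lie in a common member of $\mathcal{C}$ (since $\mathcal{C}$ is a chain); properness follows because $\emptyset\notin\mathcal{G}$ for every $\mathcal{G}\in\mathcal{C}$. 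Hence $\mathcal{G}^\ast\in\mathcal{T}$ is an upper bound for $\mathcal{C}$. Zorn's lemma then yields a maximal element $\mathcal{U}\in\mathcal{T}$, and maximality among proper filters on $I$ is immediate since any strict proper filter extension of $\mathcal{U}$ would still contain $\mathcal{F}$ and thus lie in $\mathcal{T}$. The only genuinely non-routine step in the whole proof is the FIP verification in (1), which is where the axioms for a filter are doing all the work.
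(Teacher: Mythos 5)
The paper states this theorem as recalled background material on ultrafilters and gives no proof of its own, so there is no argument to compare yours against; your proof is the standard textbook one and is correct — the FIP extension for part (1), the De Morgan argument for part (2), and Zorn's lemma applied to the poset of proper filters containing $\mathcal{F}$ for part (3) all go through with the paper's definitions (in particular, \thmref{thm:fip} supplies exactly the properness of the generated filter that you invoke). The only point worth a half-sentence more is in the converse of (1): the paper's definition of ultrafilter requires $\mathcal{U}$ to be a \emph{proper} filter, and properness follows from the dichotomy applied to $A=\emptyset$ together with upward closure, which you leave implicit.
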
}

\begin{defn}[\textsc{Ultralimit}]
    Suppose $(X,\tau)$ is a topological space and $(x_i)_{i\in I}$ a sequence in $X$. Let $\mathcal{U}$ be an ultrafilter on $I$. An element $x\in X$ is said to be a \emph{$\mathcal{U}$-ultralimit of $(x_i)_{i\in I}$ in $(X,\tau)$} if, for all $\tau$-open $U$ such that $x\in U$, $\{i\in I\mid\,x_i\in U\}\in \mathcal{U}$. In this case, we say that $(x_i)_{i\in I}$ \emph{$\mathcal{U}$-converges to $x$ in $(X,\tau)$}.
\end{defn}

We collect some basic results about ultralimits in the next theorem. 

\hypertarget{thm:ultralimit_basic}{\begin{thm}[\textsc{Properties of Ultralimits}]{\label{thm:ultralimit_basic}}
Suppose $(X,\tau)$ is a topological space. Let $\beta$ be a base and $\sigma$ a subbase for $\tau$.
    \begin{enumerate}[label=(\arabic*)]
        \hypertarget{thm:ultralimit_basic(1)}{\item $(X,\tau)$ is compact iff for any sequence $(x_i)_{i\in I}$ from $X$ and any ultrafilter $\mathcal{U}$ on $I$, $(x_i)_{i\in I}$ $\mathcal{U}$-converges in $(X,\tau)$. \label{thm:ultralimit_basic(1)}}
         \hypertarget{thm:ultralimit_basic(2)}{\item Let $(x_i)_{i\in I}$ be a sequence in $X$, $x\in X$ and $\mathcal{U}$ an ultrafilter on $I$. Then, the following statements are equivalent.\label{thm:ultralimit_basic(2)}}
        \begin{enumerate}[label=(2\alph*)]
            \item $x$ is a $\mathcal{U}$-ultralimit of $(x_i)_{i\in I}$ in $(X,\tau)$.
            \item For all $B\in \beta$, if $x\in B$, then $\{i\in I\mid\,x_i\in B\}\in \mathcal{U}$.
            \item For all $S\in \sigma$, if $x\in S$, then $\{i\in I\mid\,x_i\in S\}\in \mathcal{U}$.
        \end{enumerate}
    \end{enumerate}    
\end{thm}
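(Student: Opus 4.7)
My plan is to prove (2) first, since this is essentially a routine manipulation of the filter/ultrafilter axioms, and then to deduce (1) via the standard correspondence between open covers and families of closed sets having the finite intersection property.

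For (2), the implications (2a)$\Rightarrow$(2b) and (2a)$\Rightarrow$(2c) are immediate, because each base element and each subbase element is $\tau$-open (the latter via \defref{defn:top_gen_base/subbase}(2): the singleton $\{S\} \subseteq \sigma$ is a finite subfamily, whence $S \in \sigma_\beta \subseteq \tau$). For (2c)$\Rightarrow$(2b), fix $B \in \beta$ with $x \in B$; since $B$ is $\tau$-open, $B$ is a union of finite intersections $S_1 \cap \cdots \cap S_n$ of subbase elements, so I would pick one such intersection containing $x$, apply (2c) to each $S_k$ to place $\{i : x_i \in S_k\} \in \mathcal{U}$, take the finite intersection $\{i : x_i \in S_1 \cap \cdots \cap S_n\}$ (still in $\mathcal{U}$ by the filter axioms), and use upward closure with $S_1 \cap \cdots \cap S_n \subseteq B$ to conclude $\{i : x_i \in B\} \in \mathcal{U}$. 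The remaining implication (2b)$\Rightarrow$(2a) follows by the same recipe, using a single base element in place of the finite intersection of subbase elements.

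For the forward direction of (1), assume $(X,\tau)$ is compact, and fix a sequence $(x_i)_{i \in I}$ with an ultrafilter $\mathcal{U}$ on $I$. I would consider the closed sets $F_J := \overline{\{x_i : i \in J\}}$ indexed by $J \in \mathcal{U}$. Since $\mathcal{U}$ is a proper filter closed under finite intersections, for any $J_1, \ldots, J_n \in \mathcal{U}$ the set $J_1 \cap \cdots \cap J_n$ is a nonempty element of $\mathcal{U}$, and any $i_0$ in it gives $x_{i_0} \in F_{J_1} \cap \cdots \cap F_{J_n}$; thus $\{F_J : J \in \mathcal{U}\}$ has FIP, and compactness provides $x \in \bigcap_{J \in \mathcal{U}} F_J$. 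To see $x$ is a $\mathcal{U}$-ultralimit, suppose some $\tau$-open $U \ni x$ had $\{i : x_i \in U\} \notin \mathcal{U}$; then by \thmref{thm:ultrafilter_char}(1), $J := \{i : x_i \notin U\} \in \mathcal{U}$, so $\{x_i : i \in J\} \subseteq X \setminus U$ forces $x \in F_J \subseteq X \setminus U$, a contradiction.

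The reverse direction of (1) is the main obstacle, as it requires cooking up an appropriate ultrafilter from nothing but a family of closed sets with FIP. I would establish compactness through its closed-set formulation: given a family $\mathcal{C}$ of $\tau$-closed sets with FIP, suppose for contradiction that $\bigcap \mathcal{C} = \emptyset$. Let $I$ be the set of finite nonempty subfamilies of $\mathcal{C}$, and for each $J \in I$ pick $x_J \in \bigcap J$ using the FIP. The collection $\{D_C : C \in \mathcal{C}\}$ with $D_C := \{J \in I : C \in J\}$ itself has FIP, since $\{C_1, \ldots, C_n\} \in D_{C_1} \cap \cdots \cap D_{C_n}$; by \thmref{thm:fip} it generates a proper filter, which by \thmref{thm:ultrafilter_char}(3) extends to an ultrafilter $\mathcal{U}$ on $I$. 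Let $x$ be a $\mathcal{U}$-ultralimit of $(x_J)_{J \in I}$, as guaranteed by hypothesis. For any $C \in \mathcal{C}$, either $x \in C$ or $X \setminus C$ is a $\tau$-open neighbourhood of $x$, in which case $\{J : x_J \in X \setminus C\} \in \mathcal{U}$; but by construction $D_C \subseteq \{J : x_J \in C\}$ places the latter set in $\mathcal{U}$ as well, and two disjoint members of $\mathcal{U}$ would intersect to $\emptyset \in \mathcal{U}$, contradicting properness. Hence $x \in \bigcap \mathcal{C}$, yielding the desired contradiction and showing that $(X, \tau)$ is compact.
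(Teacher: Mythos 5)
Your proposal is correct. For part (2) you follow essentially the same route as the paper: the implications from (2a) are immediate because base and subbase elements are open, and the converse directions are routine applications of the filter axioms (finite intersections plus upward closure); the only cosmetic difference is that you chain (2c) $\Rightarrow$ (2b) $\Rightarrow$ (2a), whereas the paper goes (2c) $\Rightarrow$ (2a) directly by verifying condition (2b) for the canonical base $\sigma_\beta$ of finite intersections of subbase elements --- the two arguments are the same computation. The real divergence is in part (1): the paper does not prove it at all, but simply cites \cite[Theorem 3.1.9]{Goldbring2022}, whereas you supply a complete self-contained argument. Your forward direction (compactness implies every sequence has a $\mathcal{U}$-ultralimit, via the closed sets $F_J=\overline{\{x_i\mid i\in J\}}$ for $J\in\mathcal{U}$ and the finite-intersection-property formulation of compactness) and your reverse direction (indexing by finite subfamilies of a closed family with FIP, extending $\{D_C\}$ to an ultrafilter via \hyperlink{thm:fip}{\thmref{thm:fip}} and \hyperlink{thm:ultrafilter_char(3)}{\thmref{thm:ultrafilter_char}(3)}, and showing any ultralimit lies in every $C$) are both the standard proofs and are sound; the only ingredient you use without proof is the equivalence of compactness with the closed-set/FIP formulation, which is a textbook fact the paper also implicitly relies on elsewhere. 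What your approach buys is that the theorem becomes self-contained within the paper's own toolkit; what it costs is length, which is presumably why the authors chose to cite the result instead.
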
}
\begin{proof}
    For a proof of (1), see \cite[Theorem 3.1.9]{Goldbring2022}. We only prove (2).
    
    \underline{(2a) $\implies$ (2b)}: This is immediate, since $\beta\subseteq \tau$. 

    \underline{(2b) $\implies$ (2a)}: Let $(x_i)_{i\in I}$ be a sequence in $X$, $x\in X$ and $\mathcal{U}$ an ultrafilter on $I$, such that (2b) holds. Let $U\in \tau$ be such that $x\in U$. Now, since $\beta$ is a base for $\tau$, $U=\displaystyle\bigcup_{B\in\gamma}B$ for some $\gamma\subseteq\beta$. Now, by our hypothesis, $\{i\in I\mid\,x_i\in B\}\in \mathcal{U}$ for all $B\in\gamma$. Since $\{i\in I\mid\,x_i\in B\}\subseteq\displaystyle\bigcup_{B\in\gamma}\{i\in I\mid\,x_i\in B\}$ for any $B\in\gamma$ and $\mathcal{U}$ is a filter, $\displaystyle\bigcup_{B\in\gamma}\{i\in I\mid\,x_i\in B\}\in \mathcal{U}$. Now, $\{i\in I\mid\,x_i\in U\}=\displaystyle\bigcup_{j\in J}\{i\in I\mid\,x_i\in B_{j}\}$. Hence, $\{i\in I\mid\,x_i\in U\}\in \mathcal{U}$. 
    
    \underline{(2a) $\implies$ (2c)}: This is immediate, since $\sigma\subseteq \tau$. 
    
    \underline{(2c) $\implies$ (2a)}: Let $(x_i)_{i\in I}$ be a sequence in $X$, $x\in X$ and $\mathcal{U}$ an ultrafilter on $I$, such that (2c) holds. Moreover, let $\sigma_\beta$ be as in \hyperlink{defn:top_gen_base/subbase(2)}{\defref{defn:top_gen_base/subbase}(2)}. Then, $\sigma_\beta$ is a base for $\tau$. We now show that the statement (2b) holds for the base $\sigma_\beta$. This would then imply (2a) by the equivalence of the statements (2a) and (2b) proved above.

    Let $B\in\sigma_\beta$. Then, $B=\displaystyle\bigcap_{S\in\xi}S$ for some finite $\xi\subseteq\sigma$. Now, by our hypothesis, $\{i\in I\mid\,x_i\in S\}\in \mathcal{U}$ for all $S\in \xi$. Since $\xi$ is finite and $\mathcal{U}$ is a filter, this implies that $\displaystyle\bigcap_{S\in\xi}\{i\in I\mid\,x_i\in S\}\in \mathcal{U}$. We note that $\displaystyle\bigcap_{S\in\xi}\{i\in I\mid\,x_i\in S\}=\left\{i\in I\mid\,x_i\in \displaystyle\bigcap_{S\in\xi} S\right\}=\{i\in I\mid\,x_i\in B\}$. Thus, $\{i\in I\mid\,x_i\in B\}\in \mathcal{U}$. So, the statement (2b) holds for the base $\sigma_\beta$ and hence, the statement (2a) holds.  
\end{proof}

\begin{defn}[\textsc{Ultramodel}]
Suppose $\mstr=(\mathbf{M},\models,\mathcal{P}(\lang))$ is a normal $\mathsf{amst}$ and $(\mathbf{M},\tau_N)$ the topological space as defined in \hyperlink{thm:compact_normal_ams(II)}{\thmref{thm:compact_normal_ams(II)}}. Let $(m_i)_{i\in I}$ be a sequence in $\mathbf{M}$ and $\mathcal{U}$ an ultrafilter on $I$. An element $\mathsf{u}\in \mathbf{M}$ is said to be a \emph{$\mathcal{U}$-ultramodel of $(m_i)_{i\in I}$} if $\mathsf{u}$ is a $\mathcal{U}$-ultralimit of $(m_i)_{i\in I}$ in $(\mathbf{M},\tau_N)$.
\end{defn}

\begin{rem}
    It is clear from the above definition that an ultramodel is a special ultralimit, when the sequence under consideration is from $\mathbf{M}$, where $(\mathbf{M},\models,\pow(\lang))$ is a normal $\amst$. As mentioned in the introductory section of this article, the elements of $\mathbf{M}$ can be thought of as models of subsets of $\lang$. So, we call ultralimits of sequences in $\mathbf{M}$, ultramodels.
\end{rem}

\begin{defn}[\textsc{Closed under Ultramodels}]
Suppose $\mstr=(\mathbf{M},\models,\mathcal{P}(\lang))$ is a normal $\mathsf{amst}$ and $(\mathbf{M},\tau_N)$ the topological space as before.
\begin{enumerate}[label=(\roman*)]
    \item A set $\mathbf{K}\subseteq \mathbf{M}$ is said to be \emph{closed under ultramodels with respect to $(m_i)_{i\in I}$ relative to an ultrafilter $\mathcal{U}$ on $I$}, if every $\mathcal{U}$-ultramodel of $(m_i)_{i\in I}$ in $(\mathbf{M},\tau_N)$ belongs to $\mathbf{K}$.
    \item A set $\mathbf{K}\subseteq \mathbf{M}$ is said to be \emph{closed under ultramodels with respect to $(m_i)_{i\in I}$} if it is closed under ultramodels with respect to $(m_i)_{i\in I}$ relative to every ultrafilter on $I$.
\end{enumerate}\end{defn}

Suppose $(m_i)_{i\in I}$ is a sequence in $\mathbf{M}$, the topological space defined in \hyperlink{thm:compact_normal_ams(II)}{\thmref{thm:compact_normal_ams(II)}}, and $\mathcal{U}$ an ultrafilter on $I$. Let $\mathsf{u}$ be a $\mathcal{U}$-ultramodel of $(m_i)_{i\in I}$ in $(\mathbf{M},\tau_N)$. 

Now, for any $\Sigma\subseteq\lang$, by \hyperlink{thm:Tarski_top}{\thmref{thm:Tarski_top}}, $\Mod(\Sigma)$ is $\tau_N$-closed, i.e., $\mathbf{M}\setminus\Mod(\Sigma)$ is $\tau_N$-open. So, $\mathsf{u}\in\mathbf{M}\setminus\Mod(\Sigma)$ implies that $\{i\in I\mid\,m_i\in\mathbf{M}\setminus\Mod(\Sigma)\}\in \mathcal{U}$. In other words, if $\{i\in I\mid\,m_i\notin \Mod(\Sigma)\}\notin \mathcal{U}$, then $\mathsf{u}\in\Mod(\Sigma)$. Now, as $\mathcal{U}$ is an ultrafilter on $I$, by \hyperlink{thm:ultrafilter_char(1)}{\thmref{thm:ultrafilter_char}(1)}, $\{i\in I\mid\,m_i\notin \Mod(\Sigma)\}\notin \mathcal{U}$ iff $I\setminus \{i\in I\mid\,m_i\notin \Mod(\Sigma)\}\in \mathcal{U}$, i.e., $\{i\in I\mid\,m_i\in \Mod(\Sigma)\}\in \mathcal{U}$. Thus, we can conclude that if $\mathsf{u}$ is a $\mathcal{U}$-ultramodel of $(m_i)_{i\in I}$ in $(\mathbf{M},\tau_N)$, then, for all $\Sigma\subseteq\lang$, $\{i\in I\mid\,m_i\in \Mod(\Sigma)\}\in \mathcal{U}$ implies that $\mathsf{u}\in \Mod(\Sigma)$, or equivalently, if $\{i\in I\mid\,m_i\models\Sigma\}\in \mathcal{U}$, then $\mathsf{u}\models\Sigma$.

Conversely, suppose, for all $\Sigma\subseteq\lang$, if $\{i\in I\mid\,m_i\models\Sigma\}\in\mathcal{U}$, then $\mathsf{u}\models\Sigma$. We claim that $\mathsf{u}$ is a $\mathcal{U}$-ultramodel of $(m_i)_{i\in I}$ in $(\mathbf{M},\tau_N)$.

Now, as described in \hyperlink{thm:compact_normal_ams(II)}{\thmref{thm:compact_normal_ams(II)}}, $\tau_N$ is generated by $\{\mathbf{M}\setminus\Mod(\{\alpha\})\mid\,\alpha\in\lang\}$ as subbase. Let $\alpha\in\lang$ and $\mathsf{u}\in\mathbf{M}\setminus\Mod(\{\alpha\})$. Then, $\mathsf{u}\notin\Mod(\{\alpha\})$, i.e., $u\not\models\{\alpha\}$. So, by our assumption, $\{i\in I\mid\,m_i\models\{\alpha\}\}\notin\mathcal{U}$. Since $\mathcal{U}$ is an ultrafilter on $I$, by \hyperlink{thm:ultrafilter_char(1)}{\thmref{thm:ultrafilter_char}(1)}, this implies that $I\setminus\{i\in I\mid\,m_i\models\{\alpha\}\}\in\mathcal{U}$, i.e., $\{i\in I\mid\,m_i\not\models\{\alpha\}\}\in\mathcal{U}$, or in other words, $\{i\in I\mid\,m_i\in\mathbf{M}\setminus\{\alpha\}\}\in\mathcal{U}$. Hence, by \hyperlink{thm:ultralimit_basic(2)}{\thmref{thm:ultralimit_basic}(2)}, we can conclude that $\mathsf{u}$ is a $\mathcal{U}$-ultramodel of $(m_i)_{i\in I}$ in $(\mathbf{M},\tau_N)$.

The above observation thus proves the following theorem.

\hypertarget{thm:loz<=}{\begin{thm}{\label{thm:loz<=}}
Suppose $\mstr=(\mathbf{M},\models,\mathcal{P}(\lang))$ is a normal $\mathsf{amst}$ and $(\mathbf{M},\tau_N)$ the topological space as described in \hyperlink{thm:compact_normal_ams(II)}{\thmref{thm:compact_normal_ams(II)}}. Let $(m_i)_{i\in I}$ be a sequence in $\mathbf{M}$, $\mathcal{U}$ an ultrafilter on $I$ and $\mathsf{u}\in \mathbf{M}$. Then, $\mathsf{u}$ is a $\mathcal{U}$-ultramodel of $(m_i)_{i\in I}$ in $(\mathbf{M},\tau_N)$ iff, for all $\Sigma\subseteq \lang$, $\{i\in I\mid\,m_i\models\Sigma\}\in \mathcal{U}$ implies that $\mathsf{u}\models\Sigma$.
\end{thm}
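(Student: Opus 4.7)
The plan is to prove both directions by exploiting the subbase description of $\tau_N$ together with the complementation property for ultrafilters given in Theorem \ref{thm:ultrafilter_char}(1), namely that for every $A\subseteq I$ exactly one of $A$ or $I\setminus A$ lies in $\mathcal{U}$. The two directions use different packaged results: the forward one leans on Theorem \ref{thm:Tarski_top}, and the reverse one on Theorem \ref{thm:ultralimit_basic}(2).

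For the forward direction, I would argue contrapositively. Suppose $\mathsf{u}$ is a $\mathcal{U}$-ultramodel but there is some $\Sigma\subseteq\lang$ with $\{i\in I\mid m_i\models\Sigma\}\in\mathcal{U}$ and $\mathsf{u}\not\models\Sigma$. Then $\mathsf{u}\in\mathbf{M}\setminus\Mod(\Sigma)$, and by Theorem \ref{thm:Tarski_top} the set $\Mod(\Sigma)$ is $\tau_N$-closed, so $\mathbf{M}\setminus\Mod(\Sigma)$ is a $\tau_N$-open neighbourhood of $\mathsf{u}$. The definition of $\mathcal{U}$-ultramodel then forces $\{i\in I\mid m_i\in\mathbf{M}\setminus\Mod(\Sigma)\}=I\setminus\{i\in I\mid m_i\models\Sigma\}\in\mathcal{U}$, which together with our assumption contradicts Theorem \ref{thm:ultrafilter_char}(1).

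For the reverse direction, I would reduce the verification of the ultralimit condition to the subbase $\{\mathbf{M}\setminus\Mod(\{\alpha\})\mid\alpha\in\lang\}$ via Theorem \ref{thm:ultralimit_basic}(2c). Suppose the hypothesis holds, and take any subbasic open set $\mathbf{M}\setminus\Mod(\{\alpha\})$ containing $\mathsf{u}$. Then $\mathsf{u}\not\models\{\alpha\}$, so applying the contrapositive of the hypothesis to $\Sigma=\{\alpha\}$ gives $\{i\in I\mid m_i\models\{\alpha\}\}\notin\mathcal{U}$. Invoking Theorem \ref{thm:ultrafilter_char}(1) once more, the complement $\{i\in I\mid m_i\in\mathbf{M}\setminus\Mod(\{\alpha\})\}$ is in $\mathcal{U}$. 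Since this is exactly the subbasic condition of Theorem \ref{thm:ultralimit_basic}(2c) at $\mathsf{u}$, we conclude that $\mathsf{u}$ is a $\mathcal{U}$-ultramodel.

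I do not anticipate a genuine obstacle; the proof is essentially bookkeeping on top of the three previously established results. The only mildly delicate point is keeping the asymmetry straight: the forward direction genuinely needs $\Mod(\Sigma)$ being closed for arbitrary $\Sigma$ (supplied by Theorem \ref{thm:Tarski_top}), while the reverse direction only needs to invoke the hypothesis on singletons, thanks to the subbase reduction in Theorem \ref{thm:ultralimit_basic}(2).
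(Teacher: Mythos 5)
Your proposal is correct and follows essentially the same route as the paper: the forward direction uses the $\tau_N$-closedness of $\Mod(\Sigma)$ from \hyperlink{thm:Tarski_top}{\thmref{thm:Tarski_top}} together with \hyperlink{thm:ultrafilter_char(1)}{\thmref{thm:ultrafilter_char}(1)}, and the reverse direction reduces to the subbase via \hyperlink{thm:ultralimit_basic(2)}{\thmref{thm:ultralimit_basic}(2)}, exactly as in the paper's argument (which is presented as the discussion immediately preceding the theorem). The only cosmetic difference is that you phrase the forward direction as a contradiction where the paper states it contrapositively.
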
}

A comparison of the above theorem with Łoś’s theorem \cite{ChangeKeisler1990,Los1955} here might help one appreciate the power of the notion of ultramodels. However, Łoś’s theorem relies heavily on the syntax of first-order logic, which we feel is unnecessary for this exposition. We instead deal with the simpler case of classical propositional logic (CPL) and describe the concept of \emph{ultravaluations}, introduced in \cite{KrzysztofBozena2023} to prove an analogue of Łos's theorem. As mentioned there, this is to some extent similar to Łos's \emph{ultraproduct}. It seems that the notion of ultravaluations, although not explicitly named or investigated in as much detail, had previous occurrences in literature (see, e.g., \cite{Paseau2010}).

We assume that $\lang$ is the set of well-formed formulas (wffs) of CPL constructed in the usual way over a countable set of propositional variables $V$ using any adequate set of classical connectives. A \emph{truth assignment} is a function $v:V\to\{0,1\}$. Each truth assignment $v$ can then be uniquely extended to a \emph{valuation} $\overline{v}:\lang\to\{0,1\}$. 

Let $(v_i)_{i\in I}$ be an indexed family, i.e., a sequence, of truth assignments, where $I\neq\emptyset$, and $\mathcal{U}$ an ultrafilter on $I$. Then, the \emph{ultravaluation} of
$(v_i)_{i\in I}$ under $\mathcal{U}$ is the unique extension of the truth assignment denoted by $\displaystyle\left(\mathop{\mu}_{i\in I}v_i\right)/\mathcal{U}$ and defined as follows. For each $p\in V$,
\[
\left[\left(\mathop{\mu}_{i\in I}v_i\right)/\mathcal{U}\right](p)=\begin{cases}
    1&\text{if}~\{i\in I\mid\,v_i(p)=1\}\in \mathcal{U}\\
    0&\text{else}
\end{cases}
\]
We next state, without proof, the following result, which is a version of \cite[Lemma 3.2]{KrzysztofBozena2023}.

\begin{thm}
    Suppose $(v_i)_{i\in I}$ is a sequence of truth assignments, where $I\neq\emptyset$, and $\mathcal{U}$ an ultrafilter on $I$. Let $\lang$ be the set of well-formed formulas of CPL. Then, for any $\varphi\in\lang$,
    \[
    \overline{\left[\left(\mathop{\mu}_{i\in I} v_i\right)/\mathcal{U}\right]}(\varphi)=1\quad\hbox{ iff }\quad\{i \in I\mid\,\overline{v}_i(\varphi)\} \in \mathcal{U}.
    \]
\end{thm}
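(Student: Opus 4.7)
The plan is to proceed by structural induction on the complexity of the formula $\varphi \in \lang$. For concreteness, fix an adequate set of classical connectives, say $\{\neg, \wedge\}$ (any other adequate set reduces to this case via definitional equivalence). To lighten notation, write $w := \left(\mathop{\mu}_{i\in I} v_i\right)/\mathcal{U}$, so the goal is to show, for every $\varphi \in \lang$, that $\overline{w}(\varphi) = 1$ iff $\{i \in I : \overline{v}_i(\varphi) = 1\} \in \mathcal{U}$.

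The base case $\varphi = p \in V$ is immediate from the very definition of $w$, since $\overline{w}(p) = w(p)$ and $\overline{v}_i(p) = v_i(p)$ for every $i$. For the inductive step on $\varphi = \neg\psi$, I would set $A := \{i : \overline{v}_i(\psi) = 1\}$ and invoke the induction hypothesis to rewrite $\overline{w}(\neg\psi) = 1$ as $\overline{w}(\psi) = 0$, i.e., $A \notin \mathcal{U}$. By \hyperlink{thm:ultrafilter_char(1)}{\thmref{thm:ultrafilter_char}(1)}, this is equivalent to $I \setminus A \in \mathcal{U}$, and since $I \setminus A = \{i : \overline{v}_i(\psi) = 0\} = \{i : \overline{v}_i(\neg\psi) = 1\}$, the conclusion follows.

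For the inductive step on $\varphi = \psi \wedge \chi$, let $B := \{i : \overline{v}_i(\psi) = 1\}$ and $C := \{i : \overline{v}_i(\chi) = 1\}$, so that $B \cap C = \{i : \overline{v}_i(\psi \wedge \chi) = 1\}$. By the induction hypothesis, $\overline{w}(\psi \wedge \chi) = 1$ iff both $B \in \mathcal{U}$ and $C \in \mathcal{U}$. The forward direction then uses closure of the filter $\mathcal{U}$ under binary intersection to conclude $B \cap C \in \mathcal{U}$; the reverse uses upward closure, noting that $B \cap C \subseteq B$ and $B \cap C \subseteq C$.

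The main obstacle is the negation case, as it is the unique step where the genuine ultrafilter property (maximality, i.e., the dichotomy $A \in \mathcal{U}$ or $I \setminus A \in \mathcal{U}$) is essential; conjunction and every other monotone connective use only the filter axioms. This mirrors the role played by ultrafilter maximality in the classical proof of \L o\'s's theorem for first-order ultraproducts, and it explains why the definition of the ultravaluation had to break ties by assigning $0$ in the \textquotedblleft else\textquotedblright\ branch: this asymmetric definition is precisely what makes the inductive argument on negation go through.
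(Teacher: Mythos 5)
Your proof is correct. The paper itself states this result without proof, deferring to \cite[Lemma 3.2]{KrzysztofBozena2023}, and your argument is precisely the standard one: induction on formula complexity, with the base case immediate from the definition of the ultravaluation, the negation step resting on the ultrafilter dichotomy of \hyperlink{thm:ultrafilter_char(1)}{\thmref{thm:ultrafilter_char}(1)}, and the conjunction step using only the filter axioms (closure under finite intersection and upward closure). Your closing observation correctly identifies where maximality of $\mathcal{U}$ is indispensable; the only minor caveat is that since $\lang$ is built over an arbitrary adequate set of connectives, one should strictly run the induction on whatever connectives are primitive (each truth-functional case being handled by the same combination of the dichotomy and Boolean set operations), but reducing to $\{\neg,\wedge\}$ as you do is a harmless normalization.
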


Let $\mathcal{V}$ be the set of all truth assignments and $\models\,\subseteq\mathcal{V}\times\pow(\lang)$ is defined as follows. For any $v\in\mathcal{V}$ and $\Gamma\subseteq\lang$,
\[
v\models\Gamma\quad\hbox{ iff }\quad\overline{v}(\varphi)=1\hbox{ for all }\varphi\in\Gamma.
\]
Clearly, $(\mathcal{V},\models,\pow(\lang))$ is a normal $\amst$. The conclusion of the above theorem can then be rephrased as follows.
\[
    \left[\left(\mathop{\mu}_{i\in I} v_i\right)/\mathcal{U}\right]\models\{\varphi\}\quad\hbox{ iff }\quad\{i \in I\mid\,v_i\models\{\varphi\}\} \in \mathcal{U}.
    \]
The similarity between the above theorem and \hyperlink{thm:loz<=}{\thmref{thm:loz<=}} is now clearly apparent. Although in this special case, we obtain an `iff' condition instead of the one-sided implication in \hyperlink{thm:loz<=}{\thmref{thm:loz<=}}, the latter theorem is a generalization of the above result (and also Łoś’s theorem) in the sense that it is completely syntax- and semantics-independent. It is, however, interesting to note that the notion of ultramodels, in spite of not yielding an `iff' condition as in the above special case, is still sufficient to prove the compactness of normal $\amst$s.

\hypertarget{thm:finsat_ultramodel=>sat}{\begin{thm}[\textsc{Compactness for Normal $\mathsf{amst}$ (III)}]{\label{thm:finsat_ultramodel=>sat}}
Suppose $\mstr=(\mathbf{M},\models,\mathcal{P}(\lang))$ is a normal $\amst$ and $(\mathbf{M},\tau_N)$ the topological space as described in \hyperlink{thm:compact_normal_ams(II)}{\thmref{thm:compact_normal_ams(II)}}. Let $\Finset(\Sigma)$ denote the set of all finite subsets of $\Sigma\subseteq \lang$. Then, $\mstr$ is compact iff, for all finitely satisfiable $\Sigma\subseteq \lang$ and any ultrafilter $\mathcal{U}$ on $\Finset(\Sigma)$, every sequence $(m_{\Sigma_0})_{\Sigma_0\in \Finset(\Sigma)}$ $\mathcal{U}$-converges in $(\mathbf{M},\tau_N)$.
\end{thm}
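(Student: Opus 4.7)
The forward direction is essentially a packaging of earlier results. Assuming $\mstr$ is compact, \thmref{thm:compact_normal_ams(II)} tells us that $(\mathbf{M},\tau_N)$ is a compact topological space. Then \hyperlink{thm:ultralimit_basic(1)}{\thmref{thm:ultralimit_basic}(1)} immediately yields that for any index set, any sequence in $\mathbf{M}$, and any ultrafilter on the index set, the sequence $\mathcal{U}$-converges. In particular this applies to the specific case where the index set is $\Finset(\Sigma)$ for a finitely satisfiable $\Sigma \subseteq \lang$, giving the stated condition.

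For the backward direction, I plan to verify compactness of $\mstr$ directly. Fix a finitely satisfiable $\Sigma\subseteq \lang$; I want to produce a model of $\Sigma$. For each $\Sigma_0\in\Finset(\Sigma)$, pick some $m_{\Sigma_0}\in\Mod(\Sigma_0)$ (possible by finite satisfiability), forming a sequence $(m_{\Sigma_0})_{\Sigma_0\in\Finset(\Sigma)}$. The crux is to choose an ultrafilter that ``sees'' the directedness of $(\Finset(\Sigma),\subseteq)$. For each $\Sigma_0\in\Finset(\Sigma)$, set $[\Sigma_0]=\{\Sigma_1\in\Finset(\Sigma)\mid \Sigma_0\subseteq\Sigma_1\}$. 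The family $\mathcal{D}=\{[\Sigma_0]\mid \Sigma_0\in\Finset(\Sigma)\}$ satisfies FIP, since $[\Sigma_0]\cap[\Sigma_1]\supseteq[\Sigma_0\cup\Sigma_1]\ne\emptyset$. By \thmref{thm:fip}, $\mathcal{D}$ generates a proper filter on $\Finset(\Sigma)$, which by \hyperlink{thm:ultrafilter_char(3)}{\thmref{thm:ultrafilter_char}(3)} extends to an ultrafilter $\mathcal{U}$ on $\Finset(\Sigma)$ containing every $[\Sigma_0]$.

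By the hypothesis of the backward direction, $(m_{\Sigma_0})_{\Sigma_0\in\Finset(\Sigma)}$ $\mathcal{U}$-converges in $(\mathbf{M},\tau_N)$ to some $\mathsf{u}\in\mathbf{M}$. I will then invoke \hyperlink{thm:loz<=}{\thmref{thm:loz<=}}: for every $\alpha\in\Sigma$ the set $A_\alpha=\{\Sigma_0\in\Finset(\Sigma)\mid m_{\Sigma_0}\models\{\alpha\}\}$ contains $[\{\alpha\}]$, because for any $\Sigma_0\supseteq\{\alpha\}$ we have $m_{\Sigma_0}\models\Sigma_0$, whence $m_{\Sigma_0}\models\{\alpha\}$ by normality of $\mstr$. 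Since $[\{\alpha\}]\in\mathcal{U}$ and $\mathcal{U}$ is upward-closed, $A_\alpha\in\mathcal{U}$, so \hyperlink{thm:loz<=}{\thmref{thm:loz<=}} gives $\mathsf{u}\models\{\alpha\}$. This holds for every $\alpha\in\Sigma$, so by normality $\mathsf{u}\models\Sigma$, proving $\Sigma$ is satisfiable and hence $\mstr$ is compact.

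The main obstacle is the construction of the ``correct'' ultrafilter: one must ensure it concentrates on the upward cones in $(\Finset(\Sigma),\subseteq)$, so that the ultralimit condition interacts properly with the models $m_{\Sigma_0}$ that satisfy larger and larger finite pieces of $\Sigma$. Once this cofinal filter is extended to an ultrafilter and \hyperlink{thm:loz<=}{\thmref{thm:loz<=}} is applied, the rest reduces to a formula-by-formula check using normality.
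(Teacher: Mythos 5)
Your proposal is correct and follows essentially the same route as the paper: the forward direction via \hyperlink{thm:compact_normal_ams(II)}{\thmref{thm:compact_normal_ams(II)}} and \hyperlink{thm:ultralimit_basic(1)}{\thmref{thm:ultralimit_basic}(1)}, and the backward direction by building an ultrafilter on $\Finset(\Sigma)$ with the finite intersection property and then applying \hyperlink{thm:loz<=}{\thmref{thm:loz<=}} to the resulting ultralimit. The only (harmless) cosmetic difference is that you generate the filter from the upward cones $[\Sigma_0]$ rather than from the sets $\overline{\Sigma_0}=\{\Gamma\mid m_\Gamma\models\Sigma_0\}$ used in the paper; since $[\Sigma_0]\subseteq\overline{\Sigma_0}$ by normality, the two choices yield the same conclusion.
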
}

\begin{proof}
Suppose $\mstr$ is compact. Then, by \hyperlink{thm:ultralimit_basic(2)}{\thmref{thm:ultralimit_basic}(1)}, for every sequence $(x_i)_{i\in I}$ and any ultrafilter $\mathcal{U}$ on $I$, $(x_i)_{i\in I}$ $\mathcal{U}$-converges in $(\mathbf{M},\tau_N)$. So, in particular, for $I=\Finset(\Sigma)$, where $\Sigma\subseteq\lang$ is finitely satisfiable, and any ultrafilter $\mathcal{U}$ on $I$, every sequence $(m_{\Sigma_0})_{\Sigma_0\in I}$ $\mathcal{U}$-converges in $(\mathbf{M},\tau_N)$.

Conversely, suppose for any finitely satisfiable $\Sigma\subseteq\lang$ and any ultrafilter $\mathcal{U}$ on $\Finset(\Sigma)$, every sequence $(m_{\Sigma_0})_{\Sigma_0\in \Finset(\Sigma)}$ $\mathcal{U}$-converges in $(\mathbf{M},\tau_N)$. Let $\Sigma\subseteq\lang$ be finitely satisfiable. To establish that $\mstr$ is a compact $\amst$, we need to show that $\Sigma$ is satisfiable.

Since $\Sigma$ is finitely satisfiable, for each $\Sigma_0\in\Finset(\Sigma)$, there exists $m_{\Sigma_0}\in\mathbf{M}$ such that $m_{\Sigma_0}\models\Sigma_0$. We choose such an $m_{\Sigma_0}$, for each $\Sigma_0\in\Finset(\Sigma)$, and construct the sequence $(m_{\Sigma_0})_{\Sigma_0\in \Finset(\Sigma)}$. Now, for each $\Sigma_0\in\Finset(\Sigma)$, let $\overline{\Sigma_0}=\{\Gamma\in \Finset(\Sigma)\mid\,m_{\Gamma}\models \Sigma_0\}$. Since $\Sigma_0\in \overline{\Sigma_0}$ for each $\Sigma_0\in \Finset(\Sigma)$, $\overline{\Sigma_0}\ne\emptyset$ for all $\Sigma_0\in\Finset(\Sigma)$. We claim that $\{\overline{\Sigma_0}\mid\,\Sigma_0\in\Finset(\Sigma)\}\subseteq\pow(\Finset(\Sigma))$ satisfies FIP. 

Let $\overline{\Sigma^{(1)}},\ldots,\overline{\Sigma^{(n)}}\in\{\overline{\Sigma_0}:\Sigma_0\in\Finset(\Sigma)\}$. Then, $\Sigma^{(1)},\ldots,\Sigma^{(n)}\in\Finset(\Sigma)$. So, $\Sigma^{(1)}\cup\cdots\cup\Sigma^{(n)}\in\Finset(\Sigma)$. Hence, there exists $m_{\Sigma^{(1)}\cup\cdots\cup\Sigma^{(n)}}\in\mathbf{M}$ such that $m_{\Sigma^{(1)}\cup\cdots\cup\Sigma^{(n)}}\models\Sigma^{(1)}\cup\cdots\cup\Sigma^{(n)}$. Now, as $\mstr$ is normal, $m_{\Sigma^{(1)}\cup\cdots\cup\Sigma^{(n)}}\models\Sigma^{(i)}$ for each $1\le i\le n$. Thus, 
$\Sigma^{(1)}\cup\ldots\cup\Sigma^{(n)}\in \overline{\Sigma^{(1)}}\cap\cdots\cap \overline{\Sigma^{(n)}}$, which implies that $\overline{\Sigma^{(1)}}\cap\cdots\cap \overline{\Sigma^{(n)}}\neq\emptyset$. Thus, $\{\overline{\Sigma_0}\mid\,\Sigma_0\in\Finset(\Sigma)\}\subseteq\pow(\Finset(\Sigma))$ satisfies FIP and so, by \hyperlink{thm:fip}{\thmref{thm:fip}} and \hyperlink{thm:ultrafilter_char(3)}{\thmref{thm:ultrafilter_char}(3)}, there exists an ultrafilter, say $\mathcal{U}$, on $\Finset(\Sigma)$ containing $\{\overline{\Sigma_0}:\Sigma_0\in\Finset(\Sigma)\}$. Hence, by our hypothesis, the sequence $(m_{\Sigma_0})_{\Sigma_0\in \Finset(\Sigma)}$ $\mathcal{U}$-converges in $(\mathbf{M},\tau_N)$. Let $\mathsf{u}\in\mathbf{M}$ be a $\mathcal{U}$-ultramodel of $(m_{\Sigma_0})_{\Sigma_0\in \Finset(\Sigma)}$ in $(\mathbf{M},\tau_N)$. We claim that $\mathsf{u}\models \Sigma$. 

Since, for each $\Sigma_0\in \Finset(\Sigma)$, $\overline{\Sigma_0}=\{\Gamma\in\Finset(\Sigma)\mid\,m_\Gamma\models\Sigma_0\}\in \mathcal{U}$, by \hyperlink{thm:loz<=}{\thmref{thm:loz<=}}, $\mathsf{u}\models\Sigma_0$ for all $\Sigma_0\in \Finset(\Sigma)$. Then, as $\Sigma=\displaystyle\bigcup_{\Sigma_0\in\Finset(\Sigma)}\Sigma_0$ and $\mstr$ is normal, $\mathsf{u}\models\Sigma$. This implies that $\Sigma$ is satisfiable. Hence, $\mstr$ is compact.
\end{proof}

Suppose $\mstr=(\mathbf{M},\models,\pow(\lang))$ is a normal $\amst$. Then, from the proof of \hyperlink{thm:cumulative(1)}{\thmref{thm:cumulative}(1)}, we know that $\Mod(\emptyset)=\mathbf{M}$. Thus, $\mathbf{M}\subseteq\displaystyle\bigcup_{\Gamma\subseteq\lang}\Mod(\Gamma)$. Moreover, by \hyperlink{thm:cumulative(3)}{\thmref{thm:cumulative}(3)}, for any $\Gamma_1,\Gamma_2\subseteq\lang$, $\Mod(\Gamma_1\cup\Gamma_2)\subseteq\Mod(\Gamma_1)\cap\Mod(\Gamma_2)$. Thus, the set $\{\Mod(\Gamma)\mid\,\Gamma\subseteq \lang\}$ forms a base for some topology on $\mathbf{M}$, which is clearly different from $\tau_N$, the topology described in \hyperlink{thm:compact_normal_ams(II)}{\thmref{thm:compact_normal_ams(II)}}. Let $\tau_C$ denote the topology generated by $\{\Mod(\Gamma)\mid\,\Gamma\subseteq\lang\}$.

We now obtain the following result for $(\mathbf{M},\tau_C)$ which is analogous to the one for $(\mathbf{M},\tau_N)$ proved in \hyperlink{thm:loz<=}{\thmref{thm:loz<=}}.

\hypertarget{thm:loz=>}{\begin{thm}{\label{thm:loz=>}}
Suppose $\mstr=(\mathbf{M},\models,\mathcal{P}(\lang))$ is a normal $\amst$ and $(\mathbf{M},\tau_C)$ the topological space defined above. Let $(m_i)_{i\in I}$ be a sequence in $\mathbf{M}$, $\mathcal{U}$ an ultrafilter on $I$ and $\mathsf{u}\in \mathbf{M}$. Then, $\mathsf{u}$ is an $\mathcal{U}$-ultramodel of $(m_i)_{i\in I}$ in $(\mathbf{M},\tau_C)$ iff, for all $\Sigma\subseteq \lang$, $\mathsf{u}\models\Sigma$ implies that $\{i\in I\mid\,m_i\models\Sigma\}\in \mathcal{U}$.
\end{thm}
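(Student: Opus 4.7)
The plan is to apply \hyperlink{thm:ultralimit_basic(2)}{\thmref{thm:ultralimit_basic}(2)}, the base-version of the ultrafilter convergence criterion, to the topology $\tau_C$ using the base $\beta=\{\Mod(\Gamma)\mid\,\Gamma\subseteq\lang\}$. As already observed in the paragraph preceding the statement, $\beta$ is indeed a base for $\tau_C$: one has $\mathbf{M}=\Mod(\emptyset)\in\beta$ (invoking normality, as in the proof of \hyperlink{thm:cumulative(1)}{\thmref{thm:cumulative}(1)}), and \hyperlink{thm:cumulative(3)}{\thmref{thm:cumulative}(3)} gives $\Mod(\Gamma_1\cup\Gamma_2)\subseteq\Mod(\Gamma_1)\cap\Mod(\Gamma_2)$ for any $\Gamma_1,\Gamma_2\subseteq\lang$, so the two defining conditions for a base are met.

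With this base in hand, the equivalence (2a)$\Leftrightarrow$(2b) of \hyperlink{thm:ultralimit_basic(2)}{\thmref{thm:ultralimit_basic}(2)} says that $\mathsf{u}$ is a $\mathcal{U}$-ultralimit of $(m_i)_{i\in I}$ in $(\mathbf{M},\tau_C)$ if and only if, for every $B\in\beta$ with $\mathsf{u}\in B$, the set $\{i\in I\mid\,m_i\in B\}$ lies in $\mathcal{U}$. Every such $B$ has the form $\Mod(\Sigma)$ for some $\Sigma\subseteq\lang$; translating memberships, $\mathsf{u}\in\Mod(\Sigma)$ is the same as $\mathsf{u}\models\Sigma$, and $\{i\in I\mid\,m_i\in\Mod(\Sigma)\}=\{i\in I\mid\,m_i\models\Sigma\}$. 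Substituting these identifications into the above condition yields precisely the biconditional claimed by the theorem.

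I do not anticipate any substantive obstacle; the argument is a direct specialisation of \hyperlink{thm:ultralimit_basic(2)}{\thmref{thm:ultralimit_basic}(2)}. The contrast with the proof of \hyperlink{thm:loz<=}{\thmref{thm:loz<=}} is worth noting: there, $\tau_N$ was generated by the \emph{complements} $\mathbf{M}\setminus\Mod(\{\alpha\})$ as a subbase, so the argument had to be routed through (2c)$\Leftrightarrow$(2a) together with a complement-switch using \hyperlink{thm:ultrafilter_char(1)}{\thmref{thm:ultrafilter_char}(1)}, and produced the implication $\{i\in I\mid\,m_i\models\Sigma\}\in\mathcal{U}\Rightarrow\mathsf{u}\models\Sigma$. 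Because the base for $\tau_C$ already consists of the sets $\Mod(\Sigma)$ themselves, no complementation step is required, and the reverse implication $\mathsf{u}\models\Sigma\Rightarrow\{i\in I\mid\,m_i\models\Sigma\}\in\mathcal{U}$ drops out with no further work. Thus \hyperlink{thm:loz<=}{\thmref{thm:loz<=}} and the present theorem together exhibit the two one-sided forms of an abstract Łoś-style principle, corresponding to the two natural topologies $\tau_N$ and $\tau_C$ on $\mathbf{M}$.
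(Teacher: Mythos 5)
Your proof is correct and follows essentially the same route as the paper: both arguments observe that $\{\Mod(\Gamma)\mid\,\Gamma\subseteq\lang\}$ is a base for $\tau_C$ and then specialise the base-version (2a)$\Leftrightarrow$(2b) of \hyperlink{thm:ultralimit_basic(2)}{\thmref{thm:ultralimit_basic}(2)} to that base, translating $\mathsf{u}\in\Mod(\Sigma)$ as $\mathsf{u}\models\Sigma$. Your citation of \thmref{thm:ultralimit_basic}(2) is in fact the intended reference (the paper's printed appeal to \thmref{thm:ultrafilter_char}(2) at this point is evidently a slip), so no changes are needed.
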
}

\begin{proof}
    Suppose $(m_i)_{i\in I}$ is a sequence in $\mathbf{M}$, $\mathcal{U}$ an ultrafilter on $I$. Let $\mathsf{u}$ be a $\mathcal{U}$-ultramodel of $(m_i)_{i\in I}$ in $(\mathbf{M},\tau_C)$. Now, $\{\Mod(\Gamma)\mid\,\Gamma\subseteq\lang\}$ forms a base for $\tau_C$. Hence, by \hyperlink{thm:ultrafilter_char(2)}{\thmref{thm:ultrafilter_char}(2)}, for all $\Sigma\subseteq\lang$, if $\mathsf{u}\in\Mod(\Sigma)$, then $\{i\in I\mid\,m_i\in\Mod(\Sigma)\}\in\mathcal{U}$. In other words, for all $\Sigma\subseteq\lang$, if $\mathsf{u}\models\Sigma$, then $\{i\in I\mid\,m_i\models\Sigma\}\in\mathcal{U}$.

    Conversely, suppose for all $\Sigma\subseteq\lang$, $\mathsf{u}\models\Sigma$ implies that $\{i\in I\mid\,m_i\models\Sigma\}\in\mathcal{U}$, i.e., $\mathsf{u}\in\Mod(\Sigma)$ implies that $\{i\in I\mid\,m_i\in\Mod(\Sigma)\}\in\mathcal{U}$. Then, again as $\{\Mod(\Gamma)\mid\,\Gamma\subseteq\lang\}$ is a base for $\tau_C$, by \hyperlink{thm:ultrafilter_char(2)}{\thmref{thm:ultrafilter_char}(2)}, $\mathsf{u}$ is a $\mathcal{U}$-ultramodel of $(m_i)_{i\in I}$ in $(\mathbf{M},\tau_C)$.
\end{proof}

\begin{rem}
We note that \hyperlink{thm:loz<=}{\thmref{thm:loz<=}} and \hyperlink{thm:loz=>}{\thmref{thm:loz=>}} provide non-topological characterizations of ultralimits of sequences of elements of $\mathbf{M}$, where $(\mathbf{M},\models,\pow(\lang))$ is a normal $\amst$, in the topological spaces $(\mathbf{M},\tau_N)$ and $(\mathbf{M},\tau_C)$, respectively. This indicates that the material of this subsection could be developed from alternative definitions not involving the topological jargon. We have, however, chosen to go with the above presentation so as to tie up the results here with those in the previous subsection. This also makes the results of the two subsections readily comparable.
\end{rem}  

\hyperlink{thm:loz<=}{\thmref{thm:loz<=}} and \hyperlink{thm:loz=>}{\thmref{thm:loz=>}} can now be combined to provide the following generalization of Łoś's theorem.

\hypertarget{thm:genLoz}{\begin{thm}[\textsc{Generalized Łoś's Theorem}]{\label{thm:genLoz}}
    Suppose $\mstr=(\mathbf{M},\models,\mathcal{P}(\lang))$ is a normal $\mathsf{amst}$ and $(\mathbf{M},\tau_N)$, $(\mathbf{M},\tau_C)$ the topological spaces as described before. Let $(m_i)_{i\in I}$ be a sequence in $\mathbf{M}$, $\mathcal{U}$ an ultrafilter on $I$ and $\mathsf{u}\in \mathbf{M}$. Then, $\mathsf{u}$ is a $\mathcal{U}$-ultramodel of  $(m_i)_{i\in I}$ in both $(\mathbf{M},\tau_N)$ and $(\mathbf{M},\tau_C)$ iff the following statement holds. For all $\Sigma\subseteq \lang$, $\mathsf{u}\models\Sigma$ iff $\{i\in I:m_i\models\Sigma\}\in \mathcal{U}$.
\end{thm}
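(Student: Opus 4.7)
The plan is to observe that the biconditional in the statement decomposes cleanly into two one-sided implications, one of which is precisely the content of \hyperlink{thm:loz<=}{\thmref{thm:loz<=}} and the other of \hyperlink{thm:loz=>}{\thmref{thm:loz=>}}. So the proof will be essentially a bookkeeping argument that splices these two results together, with no new topological or ultrafilter-theoretic ideas needed.

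For the forward direction, I would assume that $\mathsf{u}$ is a $\mathcal{U}$-ultramodel of $(m_i)_{i\in I}$ in both $(\mathbf{M},\tau_N)$ and $(\mathbf{M},\tau_C)$. Applying \hyperlink{thm:loz<=}{\thmref{thm:loz<=}} to the $\tau_N$-assumption gives, for every $\Sigma\subseteq\lang$, the implication ``$\{i\in I\mid m_i\models\Sigma\}\in\mathcal{U}$ implies $\mathsf{u}\models\Sigma$''. Applying \hyperlink{thm:loz=>}{\thmref{thm:loz=>}} to the $\tau_C$-assumption gives the converse implication ``$\mathsf{u}\models\Sigma$ implies $\{i\in I\mid m_i\models\Sigma\}\in\mathcal{U}$''. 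Conjoining these two for each fixed $\Sigma\subseteq\lang$ yields the desired biconditional.

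For the backward direction, I would assume the biconditional ``$\mathsf{u}\models\Sigma$ iff $\{i\in I\mid m_i\models\Sigma\}\in\mathcal{U}$'' for all $\Sigma\subseteq\lang$. The left-to-right half of this biconditional is exactly the hypothesis of the converse part of \hyperlink{thm:loz=>}{\thmref{thm:loz=>}}, so it yields that $\mathsf{u}$ is a $\mathcal{U}$-ultramodel of $(m_i)_{i\in I}$ in $(\mathbf{M},\tau_C)$. The right-to-left half is precisely the hypothesis of the converse part of \hyperlink{thm:loz<=}{\thmref{thm:loz<=}}, so it yields that $\mathsf{u}$ is a $\mathcal{U}$-ultramodel of $(m_i)_{i\in I}$ in $(\mathbf{M},\tau_N)$.

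Since both directions follow by a direct application of the two preceding theorems, there is no real obstacle here; the only thing worth double-checking is that \hyperlink{thm:loz<=}{\thmref{thm:loz<=}} and \hyperlink{thm:loz=>}{\thmref{thm:loz=>}} are genuinely stated as biconditionals (so that they supply both the forward and backward halves of each side), which a glance at their statements confirms. Thus the proof amounts to a couple of lines that invoke the two theorems in sequence and conclude.
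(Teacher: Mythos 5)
Your proposal is correct and matches the paper's approach exactly: the paper gives no separate proof of this theorem, stating only that it follows by combining \hyperlink{thm:loz<=}{\thmref{thm:loz<=}} and \hyperlink{thm:loz=>}{\thmref{thm:loz=>}}, which is precisely the splicing argument you describe. The only logical point involved, that a universally quantified biconditional is equivalent to the conjunction of the two universally quantified one-sided implications, is handled correctly in your write-up.
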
}

Łoś's theorem enables a proof of compactness for first-order predicate logic. Now, as the above theorem is a generalization of Łoś's theorem, it is natural to raise the question as to whether it can lead to the compactness of some class of $\amst$s. We show below that ultramodels satisfying the statement in the above theorem can be used to characterize compact normal $\amst$s. We give a special name to such ultramodels.

\begin{defn}[\textsc{Łoś-limit/Łoś-model}]
    Suppose $\mstr=(\mathbf{M},\models,\pow(\lang))$ is a normal $\amst$. Let $(m_i)_{i\in I}$ be a sequence in $\mathbf{M}$ and $\mathcal{U}$ an ultrafilter on $I$. An element $\mathsf{l}\in \mathbf{M}$ is said to be a \emph{$\mathcal{U}$-Łoś-limit} or \emph{$\mathcal{U}$-Łoś-model} of $(m_i)_{i\in I}$ if, for all $\Sigma\subseteq\lang$, $\mathsf{l}\models\Sigma$ iff $\{i\in I:m_i\models\Sigma\}\in \mathcal{U}$.
\end{defn}

\begin{rem}
    We note that, by \hyperlink{thm:genLoz}{\thmref{thm:genLoz}}, a Łoś-model of a sequence in $\mathbf{M}$, where $(\mathbf{M},\models,\pow(\lang))$ is a normal $\amst$, is an ultramodel of the sequence simultaneously in two different topological spaces, viz., $(\mathbf{M},\tau_N)$ and $(\mathbf{M},\tau_C)$. 
\end{rem}

Suppose $\mstr=(\mathbf{M},\models,\pow(\lang))$ is an $\amst$. We define a relation $\preceq\,\subseteq\mathbf{M}\times\mathbf{M}$ as follows. For all $m,n\in \mathbf{M}$, $m\preceq n$ iff $\Th(\{m\})\subseteq \Th(\{n\})$. Clearly, $(\mathbf{M},\preceq)$ is a poset. We write $m\prec n$ if $m\preceq n$ and $m\ne n$. Finally, for any $m\in \mathbf{M}$, let $\mathbb{U}(m)=\{n\in \mathbf{M}\mid\,m\preceq n\}$. 

\hypertarget{thm:order<=>maxsat}{\begin{thm}{\label{thm:order<=>maxsat}}
    Suppose $\mstr=(\mathbf{M},\models,\mathcal{P}(\lang))$ is a normal $\mathsf{amst}$ and $m,n\in \mathbf{M}$. Then, $\Th(\{n\})$ is a maximal satisfiable set containing $\Th(\{m\})$ iff $n$ is a maximal element of $\mathbb{U}(m)$.
\end{thm}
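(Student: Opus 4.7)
The plan is to prove both implications by repeatedly exploiting two elementary consequences of normality, which I will state upfront and use freely. First, for every $m' \in \mathbf{M}$, by definition $m' \models \{\alpha\}$ for each $\alpha \in \Th(\{m'\})$, so normality gives $m' \models \Th(\{m'\})$; in particular $\Th(\{m'\})$ is always satisfiable. Second, if $m' \models \Sigma$ for some $\Sigma \subseteq \lang$, then by normality $m' \models \{\alpha\}$ for every $\alpha \in \Sigma$, hence $\Sigma \subseteq \Th(\{m'\})$. These two facts turn the condition ``$\Sigma$ satisfiable'' into the purely combinatorial condition ``$\Sigma \subseteq \Th(\{m'\})$ for some $m' \in \mathbf{M}$,'' which is what links the theory-order $\preceq$ to maximal satisfiability.

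For the forward direction, assume $\Th(\{n\})$ is a maximal satisfiable set containing $\Th(\{m\})$. The inclusion $\Th(\{m\}) \subseteq \Th(\{n\})$ is exactly $m \preceq n$, so $n \in \mathbb{U}(m)$. Suppose for contradiction that $n \prec n'$ for some $n' \in \mathbb{U}(m)$. Then $\Th(\{n\}) \subseteq \Th(\{n'\})$ with $n \neq n'$, and by the first observation $\Th(\{n'\})$ is satisfiable. The maximality of $\Th(\{n\})$ forces $\Th(\{n\}) = \Th(\{n'\})$; under the natural identification of models with their theories implicit in the statement that $(\mathbf{M},\preceq)$ is a poset, this contradicts $n \neq n'$. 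Hence no such $n'$ exists and $n$ is maximal in $\mathbb{U}(m)$.

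For the backward direction, assume $n$ is a maximal element of $\mathbb{U}(m)$. Then $m \preceq n$ gives $\Th(\{m\}) \subseteq \Th(\{n\})$, and by the first observation $\Th(\{n\})$ is satisfiable. Suppose, for contradiction, that there exists $\Sigma \subseteq \lang$ with $\Th(\{n\}) \subsetneq \Sigma$ and $\Sigma$ satisfiable, say $n' \models \Sigma$. By the second observation, $\Sigma \subseteq \Th(\{n'\})$, and chaining gives
\[
\Th(\{m\}) \subseteq \Th(\{n\}) \subsetneq \Sigma \subseteq \Th(\{n'\}).
\]
Thus $m \preceq n'$, i.e., $n' \in \mathbb{U}(m)$, and $\Th(\{n\}) \subsetneq \Th(\{n'\})$ yields $n \preceq n'$ with $n \neq n'$, i.e., $n \prec n'$. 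This contradicts the maximality of $n$ in $\mathbb{U}(m)$, so $\Th(\{n\})$ admits no proper satisfiable extension and is therefore a maximal satisfiable set containing $\Th(\{m\})$.

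The only delicate point is the forward direction: one has to reconcile the element-level definition $m \prec n \Leftrightarrow (m \preceq n \text{ and } m \neq n)$ with the fact that maximality of $\Th(\{n\})$ is a purely theory-level condition. The argument above handles this by reading the paper's statement that $(\mathbf{M},\preceq)$ is a poset as an implicit identification of models sharing the same theory; with that reading, both directions are short. Everything else reduces to transporting set inclusions between $\Th$-images using normality, and no additional machinery beyond the earlier results on $\Th$ and $\Mod$ is required.
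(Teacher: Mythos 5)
Your proof is correct and follows essentially the same route as the paper's: both directions rest on the two normality facts that $\Th(\{k\})$ is satisfiable for every $k\in\mathbf{M}$ and that $k\models\Sigma$ implies $\Sigma\subseteq\Th(\{k\})$, with a Zorn-free contradiction argument in each direction. The only difference is cosmetic: in the forward direction the paper passes directly from $n\prec k$ to $\Th(\{n\})\subsetneq\Th(\{k\})$, silently invoking the antisymmetry implicit in the claim that $(\mathbf{M},\preceq)$ is a poset, whereas you flag that identification explicitly -- which is, if anything, the more careful reading.
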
}
\begin{proof}
    Suppose $\Th(\{n\})$ is a maximal satisfiable set containing $\Th(\{m\})$ but $n$ is not a maximal element of $\mathbb{U}(m)$. Then, there exists $k\in \mathbb{U}(m)$ such that $n\prec k$. This implies that $\Th(\{m\})\subseteq\Th(\{n\})\subsetneq \Th(\{k\})$. However, since $k\models \Th(\{k\})$, i.e., $\Th(\{k\})$ is satisfiable, this contradicts the assumption that $\Th(\{n\})$ is a maximal satisfiable set containing $\Th(\{m\})$. Thus, $n$ is a maximal element of $\mathbb{U}(m)$.
    
    Conversely, suppose $n$ is a maximal element of $\mathbb{U}(m)$. So, $m\preceq n$, which implies that $\Th(\{m\})\subseteq\Th(\{n\})$. Now, as $n\models\Th(\{n\})$, $\Th(\{n\})$ is a satisfiable set containing $\Th(\{m\})$. If possible, suppose $\Th(\{n\})$ is not a maximal satisfiable set containing $\Th(\{m\})$. Then, there exists $\Gamma\subseteq\lang$ such that $\Gamma$ is satisfiable and $\Th(\{n\})\subsetneq\Gamma$. Since $\Gamma$ is satisfiable, there exists $k\in\mathbf{M}$ such that $k\models\Gamma$. Now, let $\alpha\in\Gamma$. Then, as $\mstr$ is normal, $k\models\{\alpha\}$. Thus, $\alpha\in\Th(\{k\})$. So, $\Gamma\subseteq\Th(\{k\})$. Hence, $\Th(\{m\})\subseteq\Th(\{n\})\subsetneq\Th(\{k\})$, which implies that $m\preceq n\prec k$. This, however, contradicts the assumption that $n$ is a maximal element of $\mathbb{U}(m)$. Thus, $\Th(\{n\})$ is a maximal satisfiable set containing $\Th(\{m\})$.
\end{proof}

\begin{defn}[\textsc{Pseudo-closure under Łoś-models}]
Suppose $\mstr=(\mathbf{M},\models,\mathcal{P}(\lang))$ is an $\amst$ and $I$ a set. Then, $\mathbf{K}\subseteq\mathbf{M}$ is said to be \emph{pseudo-closed under Łoś-models relative to $I$} if, for every sequence $(m_i)_{i\in I}$ of elements in $\mathbf{K}$ and every ultrafilter $\mathcal{U}$ on $I$, the following statement holds. If $\mathsf{l}$ is a $\mathcal{U}$-Łoś-model of $(m_i)_{i\in I}$, then $\mathbb{U}(\mathsf{l})\cap \mathbf{K}\ne\emptyset$.
\end{defn}

\hypertarget{thm:compact_normal_ams(IV)}{\begin{thm}[\textsc{Compactness for Normal $\amst$ (IV)}]{\label{thm:compact_normal_ams(IV)}}
    Suppose $\mstr=(\mathbf{M},\models,\mathcal{P}(\lang))$ is a normal $\mathsf{amst}$. For each $\Sigma\subseteq\lang$, let
    \[
    \modfin(\Sigma)=\{m\in \mathbf{M}\mid\,m\models \Sigma_0\hbox{ for some }\Sigma_0\in\Finset(\Sigma)\}.
    \]
    Moreover, suppose for any $\Sigma\subseteq\lang$, every sequence $(m_{\Sigma_0})_{\Sigma_0\in\Finset(\Sigma)}$ of elements in $\modfin(\Sigma)$ has a $\mathcal{U}$-Łoś-model, for any ultrafilter $\mathcal{U}$ on $\Finset(\Sigma)$. Then, $\mstr$ is compact iff for each $\Sigma\subseteq \lang$, $\modfin(\Sigma)$ is pseudo-closed under Łoś-models relative to $\Finset(\Sigma)$.
\end{thm}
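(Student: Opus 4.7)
Plan: The substance lies in the backward direction, so I would start there. Assume every $\modfin(\Sigma)$ is pseudo-closed under Łoś-models relative to $\Finset(\Sigma)$, and let $\Sigma\subseteq\lang$ be finitely satisfiable; the goal is to show $\Sigma$ is satisfiable. For each $\Sigma_0\in\Finset(\Sigma)$ choose $m_{\Sigma_0}\in\mathbf{M}$ with $m_{\Sigma_0}\models\Sigma_0$ (for $\Sigma_0=\emptyset$ any element of $\mathbf{M}$ works, since every element models $\emptyset$ under normality, cf.\ the proof of \hyperlink{thm:cumulative}{\thmref{thm:cumulative}(1)}). Each $m_{\Sigma_0}$ then lies in $\modfin(\Sigma)$, so $(m_{\Sigma_0})_{\Sigma_0\in\Finset(\Sigma)}$ is a sequence of the required kind. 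Mirroring the argument in \hyperlink{thm:finsat_ultramodel=>sat}{\thmref{thm:finsat_ultramodel=>sat}}, set $\overline{\Sigma_0}=\{\Gamma\in\Finset(\Sigma)\mid\,m_\Gamma\models\Sigma_0\}$; the family $\{\overline{\Sigma_0}\mid\,\Sigma_0\in\Finset(\Sigma)\}$ satisfies FIP because $\Sigma_0^{(1)}\cup\cdots\cup\Sigma_0^{(k)}\in\Finset(\Sigma)$ and, by normality, its chosen witness satisfies each $\Sigma_0^{(j)}$. Extend this family to an ultrafilter $\mathcal{U}$ on $\Finset(\Sigma)$ via \hyperlink{thm:fip}{\thmref{thm:fip}} and \hyperlink{thm:ultrafilter_char}{\thmref{thm:ultrafilter_char}(3)}.

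By the standing hypothesis, the sequence admits a $\mathcal{U}$-Łoś-model $\mathsf{l}$. Since $\overline{\Sigma_0}\in\mathcal{U}$ for every $\Sigma_0\in\Finset(\Sigma)$, the defining biconditional for a Łoś-model forces $\mathsf{l}\models\Sigma_0$ for every such $\Sigma_0$; hence $\Sigma\subseteq\Th(\{\mathsf{l}\})$. Now invoke pseudo-closure of $\modfin(\Sigma)$ to obtain $n\in\mathbb{U}(\mathsf{l})\cap\modfin(\Sigma)$. The relation $\mathsf{l}\preceq n$ unpacks to $\Th(\{\mathsf{l}\})\subseteq\Th(\{n\})$, so $\Sigma\subseteq\Th(\{n\})$, i.e.\ $n\models\{\alpha\}$ for all $\alpha\in\Sigma$; by normality, $n\models\Sigma$, so $\Sigma$ is satisfiable and $\mstr$ is compact.

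For the forward direction, assume $\mstr$ is compact and fix $\Sigma\subseteq\lang$, a sequence $(m_{\Sigma_0})_{\Sigma_0\in\Finset(\Sigma)}$ in $\modfin(\Sigma)$, an ultrafilter $\mathcal{U}$ on $\Finset(\Sigma)$, and a $\mathcal{U}$-Łoś-model $\mathsf{l}$. I would note first that $\emptyset\in\Finset(\Sigma)$ and, by normality, $\mathsf{l}\models\emptyset$, placing $\mathsf{l}$ in $\modfin(\Sigma)$; combined with $\mathsf{l}\preceq\mathsf{l}$ this already gives $\mathsf{l}\in\mathbb{U}(\mathsf{l})\cap\modfin(\Sigma)$. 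If a less trivial witness is desired, one can use compactness directly: for any finite $\Delta\subseteq\Sigma$ satisfied by $\mathsf{l}$, the set $\Th(\{\mathsf{l}\})\cup\Delta$ is finitely satisfiable (every finite subset is satisfied by $\mathsf{l}$), hence satisfiable by some $n$, and then $n\models\Th(\{\mathsf{l}\})$ yields $\mathsf{l}\preceq n$ in the sense of \hyperlink{thm:order<=>maxsat}{\thmref{thm:order<=>maxsat}}, while $n\models\Delta$ places $n$ in $\modfin(\Sigma)$.

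The anticipated obstacle is the construction of the ultrafilter in the backward direction: one has to confirm that the family $\{\overline{\Sigma_0}\}$ genuinely has FIP in the present generality (so that every finite collection admits a common witness), and only then does pseudo-closure do meaningful work, serving as the bridge that transports the partial-satisfaction information encoded in the Łoś-model $\mathsf{l}$ to a global model of $\Sigma$ via normality.
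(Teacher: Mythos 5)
Your backward direction is essentially the paper's own argument: the same choice of witnesses $m_{\Sigma_0}$, the same family $\overline{\Sigma_0}$ with FIP, the same ultrafilter extension, and the same use of pseudo-closure plus normality to convert $\Sigma\subseteq\Th(\{\mathsf{l}\})\subseteq\Th(\{n\})$ into $n\models\Sigma$. That part is correct. (Incidentally, at the point where you observe $\Sigma\subseteq\Th(\{\mathsf{l}\})$, normality already gives $\mathsf{l}\models\Sigma$, so the Łoś-model itself witnesses satisfiability; pseudo-closure is not actually needed there, in your write-up or in the paper's.)

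The forward direction is where you diverge, and the divergence deserves comment. Your primary argument is that $\emptyset\in\Finset(\Sigma)$ and every $m\in\mathbf{M}$ models $\emptyset$ in a normal $\amst$, so $\modfin(\Sigma)=\mathbf{M}$ and pseudo-closure holds unconditionally via $\mathsf{l}\in\mathbb{U}(\mathsf{l})\cap\modfin(\Sigma)$. Under the paper's literal definition of $\Finset$ (which does include $\emptyset$) this is a valid one-line proof --- but it proves the right-hand side of the ``iff'' without using compactness at all, which collapses the theorem to a triviality and is plainly not what is intended. The paper's own proof treats $\modfin(\Sigma)$ as a potentially proper subset of $\mathbf{M}$ and argues substantively: since $\mathsf{l}\models\Th(\{\mathsf{l}\})$, the Łoś biconditional applied to the set $\Th(\{\mathsf{l}\})$ gives $\{\Sigma_0\mid m_{\Sigma_0}\models\Th(\{\mathsf{l}\})\}\in\mathcal{U}$, hence nonempty; any such term $m_{\Sigma_0}$ satisfies $\Th(\{\mathsf{l}\})\subseteq\Th(\{m_{\Sigma_0}\})$, so $m_{\Sigma_0}\in\mathbb{U}(\mathsf{l})$, and it lies in $\modfin(\Sigma)$ by hypothesis on the sequence (the paper then passes, using compactness, to a maximal satisfiable extension $\Th(\{v\})$, though the term $m_{\Sigma_0}$ itself already suffices). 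Your fallback argument does not substitute for this: it begins ``for any finite $\Delta\subseteq\Sigma$ satisfied by $\mathsf{l}$,'' but you never establish that $\mathsf{l}$ satisfies any nonempty such $\Delta$, and for $\Delta=\emptyset$ you are back to the degenerate route. The missing idea, if one reads $\modfin$ non-degenerately, is precisely the application of the Łoś property to $\Th(\{\mathsf{l}\})$ to pull a member of the given sequence above $\mathsf{l}$ in the $\preceq$ order.
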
}
\begin{proof}
    Suppose $\mstr$ is compact. Let $\Sigma\subseteq \lang$ be such that $\modfin(\Sigma)$ is not pseudo-closed under Łoś-models relative to $\Finset(\Sigma)$. Then, there exists a sequence $(m_{\Sigma_0})_{\Sigma_0\in \Finset(\Sigma)}$ of elements in $\modfin(\Sigma)$ and an ultrafilter $\mathcal{U}$ on $\Finset(\Sigma)$ such that $(m_{\Sigma_0})_{\Sigma_0\in \Finset(\Sigma)}$ has a $\mathcal{U}$-Łoś-model $\mathsf{l}\in\mathbf{M}$ but $\mathbb{U}(\mathsf{l})\cap \modfin(\Sigma)=\emptyset$.
    
    Now, $\mathsf{l}\models\Th(\{\mathsf{l}\})$. Then, as $\mathsf{l}$ is a $\mathcal{U}$-Łoś-model of $(m_{\Sigma_0})_{\Sigma_0\in \Finset(\Sigma)}$, $\{\Sigma_0\in \Finset(\Sigma)\mid\,m_{\Sigma_0}\models\ \Th(\{\mathsf{l}\})\}\in \mathcal{U}$ as well. Since $\mathcal{U}$ is an ultrafilter, $\{\Sigma_0\in \Finset(\Sigma)\mid\,m_{\Sigma_0}\models\ \Th(\{\mathsf{l}\})\}\neq\emptyset$. Let $\Sigma_0\in \Finset(\Sigma)$ such that $m_{\Sigma_0}\models \Th(\{\mathsf{l}\})$. So, by normality of $\mstr$, $m_{\Sigma_0}\models\{\alpha\}$, i.e., $\alpha\in\Th(\{m_{\Sigma_0}\})$, for all $\alpha\in\Th(\{\mathsf{l}\})$. Thus, $\Th(\{\mathsf{l}\})\subseteq \Th(\{m_{\Sigma_0}\})$. Now, as $m_{\Sigma_0}\models \Th(\{m_{\Sigma_0}\})$, $\Th(\{m_{\Sigma_0}\})$ is satisfiable. So, by normality of $\mstr$, $\Th(\{m_{\Sigma_0}\})$ is finitely satisfiable. Since $\mstr$ is compact, this implies, by \hyperlink{thm:compact_normal_ams(I)}{\thmref{thm:compact_normal_ams(I)}}, that $\Th(\{m_{\Sigma_0}\})$ is contained in a maximal satisfiable set, say $\Delta$. Then, as $\Delta$ is satisfiable, there exists $v\in \mathbf{M}$ such that $v\models\Delta$. Again, by normality of $\mstr$, this implies that $v\models\{\delta\}$, i.e., $\delta\in\Th(\{v\})$, for all $\delta\in\Delta$. Thus, $\Delta\subseteq \Th(\{v\})$ and hence, $\Th(\{m_{\Sigma_0}\})\subseteq\Th(\{v\})$. Moreover, since $v\models\Th(\{v\})$, $\Th(\{v\})$ is satisfiable. So, $\Th(\{v\})$ is a satisfiable set containing $\Th(\{m_{\Sigma_0}\})$. Then, as $\Delta$ is a maximal satisfiable set containing $\Th(\{m_{\Sigma_0}\})$ and $\Delta\subseteq\Th(\{v\})$, $\Delta=\Th(\{v\})$. Thus, $\Th(\{\mathsf{l}\})\subseteq \Th(\{m_{\Sigma_0}\})\subseteq \Th(\{v\})$. Since $\Th(\{\mathsf{l}\})\subseteq\Th(\{v\})$, $\mathsf{l}\preceq v$ and hence, $v\in \mathbb{U}(\mathsf{l})$.
    
    Now, as $m_{\Sigma_0}\in\modfin(\Sigma)$, $m_{\Sigma_0}\models\Sigma^\prime_0$ for some $\Sigma^\prime_0\in \Finset(\Sigma)$. Since $\Th(\{m_{\Sigma_0}\})\subseteq \Th(\{v\})$, $v\models\Sigma^\prime_0$, as well, which implies that $v\in\modfin(\Sigma)$. Thus, $v\in \mathbb{U}(\{\mathsf{l}\})\cap\modfin(\Sigma)$. This is a contradiction. Hence, $\modfin(\Sigma)$ is pseudo-closed under Łoś-models relative to $\Finset(\Sigma)$.
    
    Conversely, suppose for any $\Sigma\subseteq \lang$, $\modfin(\Sigma)$ is pseudo-closed under Łoś-models relative to $\Finset(\Sigma)$.
    
    Let $\Sigma\subseteq\lang$ be finitely satisfiable. To establish that $\mstr$ is a compact $\amst$, we need to show that $\Sigma$ is satisfiable. Since $\Sigma$ is finitely satisfiable, for each $\Sigma_0\in\Finset(\Sigma)$, there exists $m_{\Sigma_0}\in\mathbf{M}$ such that $m_{\Sigma_0}\models\Sigma_0$. We choose such an $m_{\Sigma_0}$ for each $\Sigma_0\in \Finset(\Sigma)$. Then, for each $\Sigma_0\in \Finset(\Sigma)$, $m_{\Sigma_0}\in \modfin(\Sigma)$. Thus, $(m_{\Sigma_0})_{\Sigma_0\in\Finset(\Sigma)}$ is a sequence of elements in $\modfin(\Sigma)$. Then, as in the proof of \hyperlink{thm:finsat_ultramodel=>sat}{\thmref{thm:finsat_ultramodel=>sat}}, we define, for each $\Sigma_0\in\Finset(\Sigma)$, $\overline{\Sigma_0}=\{\Gamma\in \Finset(\Sigma)\mid\,m_{\Gamma}\models \Sigma_0\}$. Then, by the same arguments, there exists an ultrafilter, say $\mathcal{U}$, on $\Finset(\Sigma)$ containing $\{\overline{\Sigma_0}\mid\,\Sigma_0\in \Finset(\Sigma)\}$. 
    
    Now, by our hypothesis, the sequence $(m_{\Sigma_0})_{\Sigma_0\in\Finset(\Sigma)}$ has a $\mathcal{U}$-Łoś-model. Let $\mathsf{l}$ be a $\mathcal{U}$-Łoś-model of $(m_{\Sigma_0})_{\Sigma_0\in \Finset(\Sigma)}$. Then, as $\modfin(\Sigma)$ is pseudo-closed under Łoś-models relative to $\Finset(\Sigma)$, $\mathbb{U}(\mathsf{l})\cap \modfin(\Sigma)\ne\emptyset$. Let $k\in \mathbb{U}(\mathsf{l})\cap \modfin(\Sigma)$. Since $k\in \mathbb{U}(\mathsf{l})$, $\mathsf{l}\preceq k$, which implies that $\Th(\{\mathsf{l}\})\subseteq \Th(\{k\})$. We claim that $k\models\Sigma$. 
    
    Suppose the contrary, i.e., $k\not\models\Sigma$. Since $\mstr$ is a normal $\amst$, this implies that there exists $\varphi\in\Sigma$ such that $k\not\models\{\varphi\}$. Then, as $\Th(\{\mathsf{l}\})\subseteq \Th(\{k\})$, $\mathsf{l}\not\models\{\varphi\}$. Since $\mathsf{l}$ is a $\mathcal{U}$-Łoś-model of $(m_{\Sigma_0})_{\Sigma_0\in \Finset(\Sigma)}$, this implies that $\{\Sigma_0\in\Finset(\Sigma)\mid\,m_{\Sigma_0}\models\{\varphi\}\}\notin\mathcal{U}$. i.e., $\overline{\{\varphi\}}\notin \mathcal{U}$. This is a contradiction, as $\{\varphi\}\in\Finset(\Sigma)$ and $\mathcal{U}$ contains $\{\overline{\Sigma_0}\mid\,\Sigma_0\in\Finset(\Sigma)\}$. Thus, $k\models\Sigma$, i.e., $\Sigma$ is satisfiable. Hence, $\mstr$ is compact. 
\end{proof}

\section{Concluding Remarks}
In this article, we have proved several characterization theorems for compactness of normal $\amst$s. We discuss below some directions in which further research on this topic may be carried out.

As we have noted earlier, \hyperlink{thm:compact_normal_ams(I)}{\thmref{thm:compact_normal_ams(I)}} is motivated by the Henkin-style proof of compactness, while \hyperlink{thm:compact_normal_ams(II)}{\thmref{thm:compact_normal_ams(II)}} is driven by topological considerations. Several other proofs of compactness can be found in the literature (see, e.g., \cite{Paseau2010}). It would be interesting to see if similar characterization theorems for compactness of normal $\mathsf{amst}$s can be obtained by generalizing these. Investigation along similar lines for arbitrary, or other classes of $\mathsf{amst}$s also remains a goal for the future.

One might also consider generalizing the notion of compactness itself. Some such generalizations are available in literature (see, e.g., \cite{MakowskyShelah1983,Paseau2010}). Each such concept would then, in turn, offer itself to further generalizations of compact $\mathsf{amst}$s. It seems that, with appropriate handling of technical subtleties, most of the results here can be lifted in a natural way. It would be interesting to work out the details. 

The notion of $\amst$s can be used to reformulate and generalize other fundamental theorems of model theory as well. For example, one can define connectives in a more general way than the one discussed in \cite{Garcia-Matos_Vaananen2005} and investigate corresponding versions of Lindström's theorem. 
\nocite{BarwiseFeferman1985}
\bibliographystyle{siam}
\bibliography{ref}

\end{document}